\begin{document}

\title{The quaternion   core inverse and its generalizations.}

\author {{\textbf{Ivan I. Kyrchei}} \thanks{Pidstrygach Institute for Applied Problems of Mechanics and Mathematics, NAS of Ukraine, Lviv,  Ukraine, E-mail address: kyrchei@online.ua}
}

\date{}
\maketitle

\begin{abstract}
In this paper we extend notions of the core inverse, core EP inverse, DMP inverse, and CMP inverse over the quaternion skew-field ${\mathbb{H}}$ and get their determinantal representations within the framework of the theory of column-row determinants previously introduced by the author. Since  the Moore-Penrose inverse and the Drazin  inverse are  necessary tools    to represent these generalized inverses,  we use   their   determinantal representations  previously obtained by  using row-column determinants.
As the special case, we give their determinantal representations for  matrices with complex entries as well.
A numerical example to illustrate the main result is given.
\end{abstract}

\textbf{AMS Classification}: 15A09; 15A15; 15B33

\textbf{Keywords:}
Core inverse; Core EP inverse; DMP inverse; CMP inverse;  Moore-Penrose inverse; Drazin inverse; quaternion matrix;
 noncommutative determinant.

 \section{Introduction}
\newtheorem{cor}{Corollary}[section]
\newtheorem{thm}{Theorem}[section]
\newtheorem{lem}{Lemma}[section]
\theoremstyle{definition}
 \newtheorem{defn}[thm]{Definition}
 \theoremstyle{remark}
 \newtheorem{rem}[thm]{Remark}
\newcommand{\rk}{\mathop{\rm rk}\nolimits}
\newcommand{\Ind}{\mathop{\rm Ind}\nolimits}

Let ${\rm
{\mathbb{R}}}$ and ${\rm
{\mathbb{C}}}$ be the real and complex number fields, respectively.
Throughout the paper, we denote  the set of all $m\times n$ matrices over the quaternion
skew field
$$ {\mathbb{H}}=\{a_{0}+a_{1}\mathbf{i}+a_{2}\mathbf{j}+a_{3}\mathbf{k}\,
|\,\mathbf{i}^{2}=\mathbf{j}^{2}=\mathbf{k}^{2}=\mathbf{ijk}=-1,~a_{1}, a_{2}, a_{3}\in
{\mathbb{R}}\},$$
by $ {\mathbb{H}}^{m\times n}$, and by $ {\mathbb{H}}^{m\times n}_{r}$ its subset of  matrices  with a rank $r$. For $ {\bf A}  \in
{\mathbb{H}}^{m\times n}$, the symbols  $ {\bf A}^{ *}$ and $\rk ({\bf A})$ will denote  the conjugate transpose   and the  rank
of  $ {\bf A}$, respectively.
A matrix $ {\bf A}  \in
{\mathbb{H}}^{n\times n}$ is Hermitian if ${\rm {\bf
A}}^{ *}  = {\bf A}$.

\emph{The Moore-Penrose inverse} of ${\bf A}\in{\rm {\mathbb{H}}}^{m\times n}$, denoted by ${\bf A}^{\dagger}$, is the unique matrix ${\bf X}\in{\rm {\mathbb{H}}}^{n\times m}$ satisfying the following equations,
 \begin{align}\label{eq1:MP}  {\rm {\bf A}}{\bf X}
{\rm {\bf A}} =& {\rm {\bf A}}; \\
                                  {\bf X} {\rm {\bf
A}}{\bf X}  = &{\bf X};\label{eq2:MP} \\
                                  \left( {\rm {\bf A}}{\bf X} \right)^{ *}  =& {\rm
{\bf A}}{\bf X}; \label{eq3:MP} \\
                                \left( {{\bf X} {\rm {\bf A}}} \right)^{ *}  =&{\bf X} {\rm {\bf A}}.\label{eq4:MP}  \end{align}
 For ${\bf A}\in{\rm {\mathbb{H}}}^{n\times n}$ with $k = \Ind{\bf A}$ the smallest positive number such that $\rk ({\bf A}^{k+1})=\rk ({\bf A}^{k})$ \emph{the
Drazin inverse} of ${\bf A}$, denoted by ${\bf A}^{d}$, is defined to be the unique matrix ${\bf X}$ that satisfying Eq. (\ref{eq2:MP})
and the following equations,
                                \begin{align}\label{eq:Dr1}
                                    {\bf A}{\bf X}  =&
{\bf X}{\bf A}; \\
{\bf X}{\bf A}^{k+1}= {\bf
         A}^{k}~~(6a);~~~~&{\text or}~~~~     {\bf A}^{k+1}
{\bf X}= {\bf
         A}^{k}~~(6b)\nonumber. \end{align}
 In particular, when $\Ind {\bf A}=1$,
then the matrix ${\bf X}$  is called
\emph{the group inverse} and is denoted by $ {\bf X}= {\bf
A}^{\#}$.
If $\Ind {\bf A}=0$, then ${\bf A}$ is
nonsingular, and $ {\bf A}^{d}\equiv  {\bf A}^{-1}$.

A matrix ${\bf A}$ satisfying the conditions $(i), (j),\ldots$ is called an $\{i, j, \ldots\}$-inverse of
${\bf A}$,  and is denoted by ${\bf A}^{( i,j,\ldots)}$. The set of matrices ${\bf A}^{( i,j,\ldots)}$ is denoted ${\bf A}\{ i,j,\ldots\}$. In particular, ${\bf A}^{(1)}$ is  an inner inverse,   ${\bf A}^{(2)}$ is  an outer inverse, and ${\bf A}^{(1,2)}$ is  an  reflexive inverse, ${\bf A}^{(1,2,3,4)}$ is the Moore-Penrose inverse, etc.

 ${\bf P}_A:= {\bf A}{\bf A}^{\dag}$ and ${\bf Q}_A:= {\bf A}^{\dag}{\bf A}$ are the orthogonal projectors onto the range of ${\bf A}$ and the range of ${\bf A}^*$, respectively.
For $ {\bf A} \in {\rm {\mathbb{C}}}^{n\times m}$, the symbols  $\mathcal{N} ({\bf A})$, and $\mathcal{R} ({\bf A})$ will denote  the kernel and the range space of ${\bf A}$, respectively.

The core inverse was introduced by Baksalary and Trenkler in \cite{baks}. Later, it was investigated by S. Malik in \cite{mal1} and S.Z. Xu et al. in \cite{xu},  among others.
\begin{defn}\cite{baks}\label{def:cor}
A matrix ${\bf X}\in  {\mathbb{C}}^{n\times n}$ is called the core inverse of ${\rm {\bf A}} \in {\mathbb{C}}^{n\times n}$
if it satisfies the conditions
$$
{\bf A}{\bf X}={\bf P}_A,~and~ \mathcal{R}({\bf X})=\mathcal{R}({\bf A}).$$
When such matrix ${\bf X}$ exists, it is denoted $ {\bf A}^{\tiny\textcircled{\#}}$.
\end{defn}
In 2014, the core inverse
was extended to the core-EP inverse defined by K. Manjunatha Prasad and  K.S. Mohana
\cite{pras}.
Other generalizations of the core inverse were recently introduced for $n\times n$ complex matrices,
namely BT inverses \cite{baks1},  DMP inverses \cite{mal1},  and CMP inverses \cite{meh}, etc.
The characterizations, computing methods, some applications of the core inverse and its generalizations were recently investigated in complex matrices and rings (see, e.g.,
\cite{chen,fer1,fer2,gao,gut,liu,miel,mos,pras1,rak,wang}).

The determinantal representation of the usual inverse is the matrix with cofactors
in entries that suggests a direct method of finding the inverse of a matrix. The same
is desirable for the generalized inverses. But, there are various expressions of determinantal representations of generalized inverses even for matrices with complex or
real entries,   (see, e.g.  \cite{bap,rao,sta1,sta2,kyr,kyr1,kyr_nov}).

Because of the non-commutativity of the quaternion algebra, difficulties arise already in determining of the quaternion determinant (see, e.g. \cite{as,coh}).
The problem of the determinantal representation of generalized
inverses   only now  can be solved thanks to the theory of row-column determinants introduced in \cite{kyr2,kyr3}.
Within the framework of the theory of column-row determinants, determinantal representations of various generalized quaternion inverses and generalized inverse solutions to quaternion matrix
equations have been derived by the author (see, e.g.\cite{kyr4,kyr5,kyr6,kyr7,kyr8,kyr9,kyr10,kyr11,kyr12,kyr13,kyr14,kyr15}) and by other researchers (see, e.g.\cite{song1,song2,song3,song5,song6}).

In this paper we distribute notions of the core inverse, core EP inverse, DMP inverse, and CMP inverse over the quaternion skew-field ${\mathbb{H}}$ and get their determinantal representations.  As the special cases, we give their determinantal representations for complex matrices as well.
We note that  a determinantal formula for the core EP generalized inverse in complex matrices has been derived in \cite{pras} based on the determinantal representation of an  reflexive inverse obtained in \cite{bap,rao}. In this paper we propose a new determinantal representation of the core EP  inverse  in complex matrices as well.

Due to noncommutativity of quaternions, the ring of quaternion matrices ${\rm M}\left( {n, {\mathbb{H}}}\right)$ has evidently some differences from the ring of complex matrices ${\rm M}\left( {n, {\mathbb{C}}}\right)$. So,  quaternion generalized core inverses could have some features that will be discussed below.

The paper is organized as follows.  In Section 2, we start with preliminary  introduction of  row-column determinants,  determinantal representations of the Moore-Penrose inverse and the Drazin inverse previously obtained within the framework of the theory   of  row-column determinants, and some provisions of quaternion vector spaces.  In Section 3, we give determinantal representations  of the core, core EP, DMP, and CMP inverses over the quaternion skew-field, namely  the right and left core inverses are established in Subsection 3.1, the core EP inverses in Subsection 3.2, the core DMP inverse and its dual in Subsection 3.3, and finally the CMP inverse in Subsection 3.4.
A numerical example to illustrate the main results is considered in   Section 4. Finally, in Section 5, the conclusions are drawn.

\section{Preliminaries. Elements of the theory of  row-column determinants.}
Suppose $S_{n}$ is the symmetric group on the set $I_{n}=\{1,\ldots,n\}$.
 Let  $ {\bf A} \in
{\mathbb{H}}^{n\times n}$.
Row determinants of $ {\bf A}$ along its each rows can be  defined as follows.
\begin{defn}\cite{kyr2}
\emph{The $i$th row determinant} of ${\rm {\bf A}}=(a_{ij}) \in
{\mathbb{H}}^{n\times n}$ is defined  for all $i =1,\ldots,n $
by putting
 \begin{align*}{\rm{rdet}}_{ i} {\rm {\bf A}} =&
\sum\limits_{\sigma \in S_{n}} \left( { - 1} \right)^{n - r}({a_{i{\kern
1pt} i_{k_{1}}} } {a_{i_{k_{1}}   i_{k_{1} + 1}}} \ldots   {a_{i_{k_{1}
+ l_{1}}
 i}})  \ldots  ({a_{i_{k_{r}}  i_{k_{r} + 1}}}
\ldots  {a_{i_{k_{r} + l_{r}}  i_{k_{r}} }}),\\
\sigma =& \left(
{i\,i_{k_{1}}  i_{k_{1} + 1} \ldots i_{k_{1} + l_{1}} } \right)\left(
{i_{k_{2}}  i_{k_{2} + 1} \ldots i_{k_{2} + l_{2}} } \right)\ldots \left(
{i_{k_{r}}  i_{k_{r} + 1} \ldots i_{k_{r} + l_{r}} } \right),\end{align*}
where $\sigma$ is the left-ordered  permutation. It means that its first cycle from the left  starts with $i$, other cycles  start from the left  with the minimal of all the integers which are contained in it,
$$i_{k_{t}}  <
i_{k_{t} + s}~~ \text{for all}~~ t = 2,\ldots,r,~~~s =1,\ldots,l_{t}, $$
and  the order of disjoint cycles (except for the first one)  is strictly conditioned by increase from left to right of their first elements, $i_{k_{2}} < i_{k_{3}}  < \cdots < i_{k_{r}}$.
\end{defn}
Similarly, for a column determinant along an arbitrary column, we have the following definition.
\begin{defn}\cite{kyr2}
\emph{The $j$th column determinant}
 of ${\rm {\bf
A}}=(a_{ij}) \in
{\mathbb{H}}^{n\times n}$ is defined for
all $j =1,\ldots,n $ by putting
 \begin{align*}{\rm{cdet}} _{{j}}  {\bf A} =&
\sum\limits_{\tau \in S_{n}} ( - 1)^{n - r}(a_{j_{k_{r}}
j_{k_{r} + l_{r}} } \ldots a_{j_{k_{r} + 1} j_{k_{r}} })  \ldots  (a_{j
j_{k_{1} + l_{1}} }  \ldots  a_{ j_{k_{1} + 1} j_{k_{1}} }a_{j_{k_{1}}
j}),\\
\tau =&
\left( {j_{k_{r} + l_{r}}  \ldots j_{k_{r} + 1} j_{k_{r}} } \right)\ldots
\left( {j_{k_{2} + l_{2}}  \ldots j_{k_{2} + 1} j_{k_{2}} } \right){\kern
1pt} \left( {j_{k_{1} + l_{1}}  \ldots j_{k_{1} + 1} j_{k_{1} } j}
\right), \end{align*}
\noindent where $\tau$ is the right-ordered  permutation. It means that its first cycle from the right  starts with $j$, other cycles  start from the right  with the minimal of all the integers which are contained in it,
$$j_{k_{t}}  < j_{k_{t} + s} ~~ \text{for all}~~ t = 2,\ldots,r,~~~s =1,\ldots,l_{t}, $$
and the order of disjoint cycles (except for the first one) is strictly conditioned by increase from right to left of their first elements,
 $j_{k_{2}}  < j_{k_{3}}  < \cdots <
j_{k_{r}} $.
\end{defn}
The row and column determinants have the following linear properties.
\begin{lem}\cite{kyr2}\label{lem:row_combin} If the $i$th row of
 $ {\bf A}\in {\mathbb{H}}^{n\times n}$ is a left linear combination
of  other row vectors, i.e. $ a_{i.} = \alpha_{1} {\bf b}_{1 } + \cdots + \alpha_{k}  {\bf b}_{k }$, where $
\alpha_{l} \in { {\mathbb{H}}}$ and ${\bf b}_{l }\in {\mathbb{H}}^{1\times n}$ for all $ l = {1,\ldots, k}$ and $ i = {1,\ldots, n}$, then
\[
 {\rm{rdet}}_{i}\,  {\bf A}_{i  .} \left(
\alpha_{1}  {\bf b}_{1 } + \cdots + \alpha_{k}
{\bf b}_{k }  \right)=\sum_l  \alpha_{l}{\rm{rdet}}_{i}\, {\bf A}_{i  .} \left(
  {\bf b}_{l } \right).
\]
\end{lem}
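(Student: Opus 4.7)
The plan is to exploit the structural property forced by the left-ordered convention in the definition of $\mathrm{rdet}_{i}$: every monomial of $\mathrm{rdet}_{i}\,{\bf A}$ contains exactly one factor drawn from row $i$, and that factor sits at the extreme left of the (noncommutative) product. Once this observation is in hand, the lemma reduces to distributing a sum appearing in the leftmost position and pulling the coefficients $\alpha_{l}\in\mathbb{H}$ out to the left — an operation that is legal precisely because left-multiplication by an element of $\mathbb{H}$ is well-defined regardless of noncommutativity.

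First, I would fix an arbitrary $\sigma\in S_{n}$ contributing to the sum. Because $\sigma$ is left-ordered, its first cycle has the form $(i,i_{k_{1}},i_{k_{1}+1},\ldots,i_{k_{1}+l_{1}})$, and so the associated monomial contains the block
\[
a_{i,i_{k_{1}}}\,a_{i_{k_{1}},i_{k_{1}+1}}\cdots a_{i_{k_{1}+l_{1}},i}
\]
as its leftmost segment, followed by blocks coming from disjoint cycles that do not involve the index $i$ at all. In particular, among the $n$ factors of the monomial, the factor $a_{i,i_{k_{1}}}$ is the unique one whose first subscript equals $i$, i.e. the unique factor drawn from the $i$th row; and it is the leftmost factor of the whole product.

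Next, I would substitute $a_{i,i_{k_{1}}}=\alpha_{1}b_{1,i_{k_{1}}}+\cdots+\alpha_{k}b_{k,i_{k_{1}}}$. Since this sum occupies the leftmost position in the monomial, the distributive law together with the left-position of each $\alpha_{l}$ gives
\[
(-1)^{n-r}\Bigl(\sum_{l}\alpha_{l}\,b_{l,i_{k_{1}}}\Bigr)a_{i_{k_{1}},i_{k_{1}+1}}\cdots=\sum_{l}\alpha_{l}\cdot(-1)^{n-r}b_{l,i_{k_{1}}}a_{i_{k_{1}},i_{k_{1}+1}}\cdots,
\]
and the expression to the right of $\alpha_{l}$ is exactly the monomial indexed by $\sigma$ in the expansion of $\mathrm{rdet}_{i}\,{\bf A}_{i.}({\bf b}_{l})$. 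Summing over $\sigma\in S_{n}$ and interchanging the two finite sums produces the desired identity.

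The only place where noncommutativity of $\mathbb{H}$ could cause trouble is the step of pulling each $\alpha_{l}$ out of the monomial; this step is legitimate solely because the substituted factor appears in the leftmost slot of every monomial. I expect this to be the main (and essentially the only) conceptual point of the proof, and it is precisely what the left-ordered convention in the definition of $\mathrm{rdet}_{i}$ was designed to guarantee. Everything else amounts to bookkeeping over $S_{n}$.
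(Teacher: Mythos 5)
Your argument is correct: each monomial of $\mathrm{rdet}_{i}$ contains exactly one entry from row $i$, namely $a_{i\,i_{k_{1}}}$, and the left-ordered convention places it in the leftmost slot, so substituting $\sum_{l}\alpha_{l}b_{l,i_{k_{1}}}$ and distributing lets each $\alpha_{l}$ be extracted on the left despite noncommutativity. The paper states this lemma without proof (citing \cite{kyr2}), and your reconstruction is precisely the standard argument used there.
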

\begin{lem}\cite{kyr2}\label{lem:col_combin} If the $j$th column of
 $ A\in {\mathbb{H}}^{m\times n}$ is a right linear combination
of  other column vectors, i.e. $ {\bf a}_{.j} =  {\bf c}_{1}\alpha_{1} + \cdots +  {\bf c}_{k } \alpha_{k}$, where $
\alpha_{l} \in { {\mathbb{H}}}$ and ${\bf c}_{l }\in {\mathbb{H}}^{n\times1 }$ for all $ l = {1,\ldots, k}$ and $ j = {1,\ldots, n}$, then
\[
 {\rm{cdet}}_{j}\,  {\bf A}_{.j} \left(
  {\bf c}_{1 }\alpha_{1} + \cdots +
{\bf c}_{k }  \alpha_{k} \right)=\sum_l {\rm{cdet}}_{j}\,  {\bf A}_{.j} \left(
  {\bf c}_{l} \right) \alpha_{l}.
\]
\end{lem}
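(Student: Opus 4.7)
The plan is to exploit a structural feature of the column determinant: in every monomial of ${\rm{cdet}}_{j}{\bf A}$, the single factor drawn from the $j$th column sits at the very right end of the product, so the right-distributive law of quaternion multiplication lets one pull the scalars $\alpha_{l}$ out cleanly on the right.

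First I would fix an arbitrary right-ordered permutation $\tau\in S_{n}$ and inspect the cycle containing $j$. By definition of the right-ordering, $j$ lies only in the first (rightmost) cycle of $\tau$, and in exactly one position within that cycle, namely as the rightmost element, so the cycle takes the form $(j_{k_{1}+l_{1}}\ldots j_{k_{1}+1}\,j_{k_{1}}\,j)$. Consequently, within the associated monomial
\[
(-1)^{n-r}(a_{j_{k_{r}}j_{k_{r}+l_{r}}}\ldots a_{j_{k_{r}+1}j_{k_{r}}})\ldots(a_{j\,j_{k_{1}+l_{1}}}\ldots a_{j_{k_{1}+1}j_{k_{1}}}a_{j_{k_{1}}j}),
\]
the column index $j$ appears in precisely one factor, the rightmost one $a_{j_{k_{1}}j}$. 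Every other factor $a_{uv}$ satisfies $v\ne j$, so it depends only on entries of ${\bf A}$ lying outside the $j$th column. I would therefore write the monomial as $P_{\tau}\cdot a_{j_{k_{1}}j}$, isolating all column-$j$ dependence in the last factor.

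Next I would substitute $a_{j_{k_{1}}j}=\sum_{l}c_{l,j_{k_{1}}}\alpha_{l}$, where $c_{l,j_{k_{1}}}$ denotes the $j_{k_{1}}$th entry of $\mathbf{c}_{l}$, and use associativity together with the rightmost position of $\alpha_{l}$:
\[
P_{\tau}\cdot\sum_{l}c_{l,j_{k_{1}}}\alpha_{l}=\sum_{l}\bigl(P_{\tau}\cdot c_{l,j_{k_{1}}}\bigr)\alpha_{l}.
\]
The bracketed product $P_{\tau}\cdot c_{l,j_{k_{1}}}$ is precisely the $\tau$-summand of ${\rm{cdet}}_{j}{\bf A}_{.j}(\mathbf{c}_{l})$, because replacing the $j$th column of ${\bf A}$ by $\mathbf{c}_{l}$ leaves all factors of $P_{\tau}$ untouched and only modifies the rightmost factor, turning $a_{j_{k_{1}}j}$ into $c_{l,j_{k_{1}}}$.

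Finally I would sum over $\tau\in S_{n}$ and exchange the two (finite) summations to obtain the asserted identity. The main obstacle---more bookkeeping than substance---is the combinatorial claim that $j$ really appears only once in each $\tau$, exclusively at the rightmost position of the first cycle, so that the substitution of the right linear combination modifies exactly one factor in each monomial and always in the rightmost slot. This is the precise mirror of Lemma~\ref{lem:row_combin}: the left-ordered convention used there places the analogous $j$th-row factor at the leftmost slot and licenses the symmetric factorisation on the left.
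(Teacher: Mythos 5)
Your argument is correct and is exactly the one the definition of ${\rm{cdet}}_{j}$ is built to support: since $\tau$ is a permutation, $j$ occurs in exactly one cycle, the right-ordering places it as the rightmost symbol of the first cycle, so the unique factor drawn from the $j$th column is the rightmost factor $a_{j_{k_{1}}j}$ of each monomial, and distributing the product over $\sum_{l}c_{l,j_{k_{1}}}\alpha_{l}$ pulls each $\alpha_{l}$ out on the right. The paper itself only cites this lemma from \cite{kyr2} without reproducing a proof, and your reasoning coincides with the standard argument given there (mirroring Lemma~\ref{lem:row_combin} on the left), so there is nothing further to add.
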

So,  an arbitrary $n\times n$ quaternion matrix inducts a set from $n$ row determinants and $n$ column determinants that are different in general. Only for
    Hermitian $ {\bf A}$, we have \cite{kyr2},
 $${\rm{rdet}} _{1}  {\bf A} = \cdots = {\rm{rdet}} _{n} {\bf
A} = {\rm{cdet}} _{1}  {\bf A} = \cdots = {\rm{cdet}} _{n}  {\bf
A} \in  {\mathbb{R}},$$ that enables to define \emph{the determinant
of a Hermitian matrix}  by putting
$\det {\bf A}: = {\rm{rdet}}_{{i}}\,
{\bf A} = {\rm{cdet}} _{{i}}\, {\bf A} $
 for all $i =1,\ldots,n$.

Properties of the
determinant of a Hermitian matrix are
similar to the properties of an usual (commutative) determinant and they have been completely explored by row-column determinants in \cite{kyr3}. In particular, for any matrix $ {\bf A} \in {\mathbb{H}}^{m\times n} $ it is proved \cite{kyr3} that $ \det({\bf A}^*{\bf A})=0$ iff some column of $ {\bf A}$ is a right linear combination of others, or some row of $ {\bf A}^*$ is a left linear combination of others. From this follows the definition of the \emph{determinantal rank} of a quaternion matrix $ {\bf A} $ as the largest possible size of a nonzero principal minor of its corresponding Hermitian matrix ${\bf A}^*{\bf A}$. It is shown that the row rank of a quaternion matrix $ {\bf A} \in {\mathbb{H}}^{m\times n} $ (that is a number of its left-linearly independent rows), the column rank (that is a number of its right-linearly independent columns) and its determinantal rank are equivalent herewith ${\rm rank}\, ({\bf A}^*{\bf A})={\rm rank}\, ({\bf A}{\bf A}^*)$.

For introducing determinantal representations of generalized inverses, the following notations will be used.

 Let $\alpha : = \left\{
{\alpha _{1} ,\ldots ,\alpha _{k}} \right\} \subseteq {\left\{
{1,\ldots ,m} \right\}}$ and $\beta : = \left\{ {\beta _{1}
,\ldots ,\beta _{k}} \right\} \subseteq {\left\{ {1,\ldots ,n}
\right\}}$ be subsets of the order $1 \le k \le \min {\left\{
{m,n} \right\}}$.  ${\bf A}_{\beta} ^{\alpha} $ denotes a submatrix of $ {\bf A}$ whose rows are indexed by
$\alpha$ and the columns indexed by $\beta$. So,  $ {\bf
A}_{\alpha} ^{\alpha}$ denotes a principal submatrix of $ {\bf A}$ with rows and columns indexed by $\alpha$.
 If $ {\bf A}$ is Hermitian, then
$\left| {\bf A}\right|_{\alpha} ^{\alpha}$ denotes the
corresponding principal minor of $\det {\rm {\bf A}}$.

Suppose $\textsl{L}_{ k,
n}: = {\left\{ {\,\alpha :\alpha = \left( {\alpha _{1} ,\ldots
,\alpha _{k}} \right),\,{\kern 1pt} 1 \le \alpha _{1} < \cdots
< \alpha _{k} \le n} \right\}}$ denotes
 the collection of strictly
increasing sequences of $1 \leq k\leq n$ integers chosen from $\left\{
{1,\ldots ,n} \right\}$. Then, for fixed $i \in \alpha $ and $j \in
\beta $, the collection of  sequences of row indexes that contain the index $i$ is denoted by $I_{r,\,m} {\left\{ {i} \right\}}: = \left\{
{\,\alpha :\alpha \in L_{r,m} ,i \in \alpha}  \right\}$, similarly, the collection of  sequences of column indexes that contain the index $j$ is denote by $ J_{r,\,n} {\left\{ {j} \right\}}: = {\left\{ {\,\beta :\beta
\in L_{r,n} ,j \in \beta}  \right\}}$.

 Let ${\rm {\bf a}}_{.j} $ be the $j$th column and ${\rm {\bf
a}}_{i.} $ be the $i$th row of ${\rm {\bf A}}$. Suppose ${\rm {\bf
A}}_{.j} \left( {{\rm {\bf b}}} \right)$ denotes the matrix obtained from
${\rm {\bf A}}$ by replacing its $j$th column with the column ${\rm {\bf
b}}$, and ${\rm {\bf A}}_{i.} \left( {{\rm {\bf b}}} \right)$ denotes the
matrix obtained from ${\rm {\bf A}}$ by replacing its $i$th row with the
row ${\rm {\bf b}}$.
 Denote by ${\rm {\bf a}}_{.j}^{*} $ and ${\rm {\bf
a}}_{i.}^{*} $ the $j$th column  and the $i$th row of  ${\rm
{\bf A}}^{*} $, respectively.
\begin{thm} \cite{kyr4}\label{th:det_rep_mp}
If $ {\bf A} \in  {\mathbb{H}}_{r}^{m\times n} $, then
the Moore-Penrose inverse  ${\rm {\bf A}}^{ \dag} = \left( {a_{ij}^{
\dag} } \right) \in  {\mathbb{H}}_{}^{n\times m} $ have the
following determinantal representations,
  \begin{equation}
\label{eq:cdet_repr_A*A}
 a_{ij}^{ \dag}  = {\frac{{{\sum\limits_{\beta
\in J_{r,n} {\left\{ {i} \right\}}} {{\rm{cdet}} _{i} \left(
{\left( { {\bf A}^{ *} {\bf A}} \right)_{. i}
\left( { {\bf a}_{\,.j}^{ *} }  \right)} \right)
 _{\beta} ^{\beta} } } }}{{{\sum\limits_{\beta \in
J_{r,n}} {{\left|  { {\bf A}^{ *}  {\bf A}}
\right| _{\beta} ^{\beta}  }}} }}},
\end{equation}
and
  \begin{equation}
\label{eq:rdet_repr_AA*} a_{ij}^{ \dag}  =
{\frac{{{\sum\limits_{\alpha \in I_{r,m} {\left\{ {j} \right\}}}
{{\rm{rdet}} _{j} \left( {( {\bf A} {\bf A}^{ *}
)_{j.} ( {\bf a}_{\,i.}^{ *} )} \right)_{\alpha}
^{\alpha} } }}}{{{\sum\limits_{\alpha \in I_{r,m}}  {{
{\left| { {\bf A} {\bf A}^{ *} } \right|
_{\alpha} ^{\alpha} } }}} }}}.
\end{equation}
\end{thm}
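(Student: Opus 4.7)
The approach is to obtain (\ref{eq:cdet_repr_A*A}) as the limit of a classical Cramer-type formula for a perturbed invertible matrix. Starting from the identity ${\bf A}^{\dag} = \lim_{\varepsilon\to 0^+}({\bf A}^*{\bf A}+\varepsilon{\bf I})^{-1}{\bf A}^*$, which holds over $\mathbb{H}$ by a direct verification of the Penrose equations in the limit (e.g.\ via the quaternion singular value decomposition of ${\bf A}$), the problem reduces to computing the entries of $({\bf A}^*{\bf A}+\varepsilon{\bf I})^{-1}$. Since ${\bf A}^*{\bf A}+\varepsilon{\bf I}$ is Hermitian positive definite for $\varepsilon > 0$, its principal minors are real, $\det({\bf A}^*{\bf A}+\varepsilon{\bf I})>0$, and the Cramer-type formula for invertible Hermitian quaternion matrices developed in \cite{kyr3} gives
\begin{equation*}
\bigl(({\bf A}^*{\bf A}+\varepsilon{\bf I})^{-1}\bigr)_{ik} = \frac{\mathrm{cdet}_i\bigl(({\bf A}^*{\bf A}+\varepsilon{\bf I})_{.i}({\bf e}_k)\bigr)}{\det({\bf A}^*{\bf A}+\varepsilon{\bf I})},
\end{equation*}
where ${\bf e}_k$ is the $k$-th standard basis column.

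Multiplying by $({\bf A}^*)_{kj}=\overline{a_{jk}}$ and summing over $k$, the right-linearity of column determinants (Lemma \ref{lem:col_combin}) pulls the scalars $\overline{a_{jk}}$ inside $\mathrm{cdet}_i$, and the combination $\sum_k {\bf e}_k\overline{a_{jk}} = {\bf a}^*_{.j}$ appears in the $i$-th column, producing
\begin{equation*}
a^{\dag}_{ij} = \lim_{\varepsilon\to 0^+}\frac{\mathrm{cdet}_i\bigl(({\bf A}^*{\bf A}+\varepsilon{\bf I})_{.i}({\bf a}^*_{.j})\bigr)}{\det({\bf A}^*{\bf A}+\varepsilon{\bf I})}.
\end{equation*}
The remaining step is a Laplace-type expansion of numerator and denominator in powers of $\varepsilon$. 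The denominator, as the characteristic polynomial of $-{\bf A}^*{\bf A}$ evaluated at $\varepsilon$, has lowest nonvanishing order $\varepsilon^{n-r}$ with coefficient $\sum_{\beta\in J_{r,n}}|{\bf A}^*{\bf A}|^\beta_\beta > 0$. In the numerator, the $i$-th column is fixed as ${\bf a}^*_{.j}$ and bears no $\varepsilon{\bf I}$ contribution, so the analogous expansion restricted to the remaining $n-1$ indices produces the lowest-order term $\varepsilon^{n-r}\sum_{\beta\in J_{r,n}\{i\}}\mathrm{cdet}_i\bigl(({\bf A}^*{\bf A})_{.i}({\bf a}^*_{.j})\bigr)^\beta_\beta$. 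Cancellation of the $\varepsilon^{n-r}$ factors in the limit gives (\ref{eq:cdet_repr_A*A}). Formula (\ref{eq:rdet_repr_AA*}) follows by the dual argument starting from ${\bf A}^\dag = \lim_{\varepsilon\to 0^+}{\bf A}^*({\bf A}{\bf A}^*+\varepsilon{\bf I})^{-1}$ and invoking Lemma \ref{lem:row_combin}.

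The main obstacle is the Laplace-type expansion of the quaternion column determinants of ${\bf A}^*{\bf A}+\varepsilon{\bf I}$ (and of its $i$-th-column-replaced variant) as polynomials in $\varepsilon$, and the identification of the lowest-order coefficients with the sums of $r$-principal minors appearing in (\ref{eq:cdet_repr_A*A}). In the commutative setting this expansion is classical, but in the quaternion case one must carefully track how the noncommutative column-determinant operation interacts with the additive perturbation $\varepsilon{\bf I}$ and with the substitution of the fixed column ${\bf a}^*_{.j}$; this is where the row-column-determinant calculus of \cite{kyr3} is essential. The remaining ingredients---the limit identity, the Cramer-type formula for invertible Hermitian matrices, and the applications of the linearity lemmas---are essentially routine.
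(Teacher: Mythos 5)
The paper does not prove this theorem itself --- it is imported from \cite{kyr4} --- and your proposal reconstructs essentially the argument used there: the limit representation ${\bf A}^{\dag}=\lim_{\varepsilon\to 0^{+}}({\bf A}^{*}{\bf A}+\varepsilon{\bf I})^{-1}{\bf A}^{*}$, the Cramer-type formula for nonsingular Hermitian quaternion matrices via ${\rm{cdet}}_{i}$, the right-linearity of Lemma~\ref{lem:col_combin} to absorb ${\bf a}_{.j}^{*}$ into the $i$th column, and the expansion of numerator and denominator in powers of $\varepsilon$ with lowest-order coefficients given by sums of principal $r$-minors (which vanish for orders exceeding $r$ since $\mathcal{R}({\bf A}^{*})=\mathcal{R}({\bf A}^{*}{\bf A})$). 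You also correctly identify the genuine technical crux --- justifying that expansion for the noncommutative column determinants --- which is exactly the part supplied by the characteristic-polynomial machinery for Hermitian quaternion matrices developed in \cite{kyr2,kyr3}.
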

\begin{rem}\label{rem:unit_repr} For an arbitrary full-rank matrix $ {\bf A} \in  {\mathbb{H}}_{r}^{m\times n} $, a row-vector ${\bf b}\in  {\mathbb{H}}^{1\times m} $, and a column-vector ${\bf c}\in  {\mathbb{H}}^{n\times 1}$  we put, respectively,
\begin{itemize}
  \item if $r=m$, then
 \begin{align*} {\rm{rdet}} _{i} \left( {( {\bf A} {\bf A}^{ *}
)_{i.} \left( {\bf b}  \right)} \right)=&
{\sum\limits_{\alpha \in I_{m,m} {\left\{ {i} \right\}}}
{{\rm{rdet}} _{i} \left( {( {\bf A} {\bf A}^{ *}
)_{i.} \left( {\bf b}  \right)} \right)_{\alpha}
^{\alpha} } },\\
\det\left( { {\bf A} {\bf A}^{ *} } \right)=&{\sum\limits_{\alpha \in I_{m,m}}  {{
{\left| { {\bf A} {\bf A}^{ *} } \right|
_{\alpha} ^{\alpha} } }}}
 \end{align*}
for all $i=1,\ldots,m$;
 \item
   if $ r=n$, then
  \begin{align*}{\rm{cdet}} _{j} \left(
{\left( { {\bf A}^{ *} {\bf A}} \right)_{. j}
\left( {\bf c}  \right)} \right)=&
\sum\limits_{\beta
\in J_{n,n} {\left\{ {j} \right\}}} {{\rm{cdet}} _{j} \left(
{\left( { {\bf A}^{ *}  {\bf A}} \right)_{. j}
\left( {\bf c}  \right)} \right)
 _{\beta} ^{\beta} },
 \\\det\left(  { {\bf A}^{ *}  {\bf A}}
\right)=&{\sum\limits_{\beta \in
J_{n,n}} {{\left|  { {\bf A}^{ *} {\bf A}}
\right|_{\beta} ^{\beta}  }}} \end{align*}
for all $j=1,\ldots,n$.
\end{itemize}
\end{rem}

\begin{cor}\label{cor:det_repr_her_mp}
If $ {\bf A} \in  {\mathbb{H}}_{r}^{m\times n} $ is Hermitian,  then the
then
the Moore-Penrose inverse  ${\rm {\bf A}}^{ \dag} = \left( {a_{ij}^{
\dag} } \right) \in  {\mathbb{H}}_{}^{n\times m} $ have the
following determinantal representations,
  \begin{equation}
\label{eq:det_cdet_mp_her}
 a_{ij}^{ \dag}  = {\frac{{{\sum\limits_{\beta
\in J_{r,n} {\left\{ {i} \right\}}} {{\rm{cdet}} _{i} \left(
{\left( { {\bf A}^{2} } \right)_{. i}
\left( { {\bf a}_{\,.j}}  \right)} \right)
 _{\beta} ^{\beta} } } }}{{{\sum\limits_{\beta \in
J_{r,n}} {{\left|  { {\bf A}^{ 2}  }
\right| _{\beta} ^{\beta}  }}} }}},
\end{equation}
and
  \begin{equation}
\label{eq:det_rdet_mp_her} a_{ij}^{ \dag}  =
{\frac{{{\sum\limits_{\alpha \in I_{r,m} {\left\{ {j} \right\}}}
{{\rm{rdet}} _{j} \left( {( {\bf A}^{ 2}
)_{j.} ( {\bf a}_{\,i.} )} \right)_{\alpha}
^{\alpha} } }}}{{{\sum\limits_{\alpha \in I_{r,m}}  {{
{\left| {{\bf A}^{2} } \right|
_{\alpha} ^{\alpha} } }}} }}}.
\end{equation}

\end{cor}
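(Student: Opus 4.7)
The plan is to derive Corollary~\ref{cor:det_repr_her_mp} as an immediate specialization of Theorem~\ref{th:det_rep_mp} to the Hermitian case. Since $\mathbf{A}$ being Hermitian forces $m=n$, the matrix $\mathbf{A}\in\mathbb{H}^{n\times n}_r$ satisfies $\mathbf{A}^{*}=\mathbf{A}$, and I would exploit this single identity throughout.

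First I would substitute $\mathbf{A}^{*}=\mathbf{A}$ into each of the building blocks of the formulas (\ref{eq:cdet_repr_A*A}) and (\ref{eq:rdet_repr_AA*}). The denominators are immediate: $\mathbf{A}^{*}\mathbf{A}=\mathbf{A}\mathbf{A}=\mathbf{A}^{2}$ and likewise $\mathbf{A}\mathbf{A}^{*}=\mathbf{A}^{2}$, so the principal minors $|\mathbf{A}^{*}\mathbf{A}|^{\beta}_{\beta}$ and $|\mathbf{A}\mathbf{A}^{*}|^{\alpha}_{\alpha}$ become $|\mathbf{A}^{2}|^{\beta}_{\beta}$ and $|\mathbf{A}^{2}|^{\alpha}_{\alpha}$, respectively. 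Next, since the $j$th column of $\mathbf{A}^{*}$ coincides with the $j$th column of $\mathbf{A}$, the auxiliary vector $\mathbf{a}^{*}_{.j}$ reduces to $\mathbf{a}_{.j}$; similarly $\mathbf{a}^{*}_{i.}=\mathbf{a}_{i.}$ because $\mathbf{A}^{*}=\mathbf{A}$. So the column that replaces the $i$th column in $(\mathbf{A}^{*}\mathbf{A})_{.i}(\cdot)$ becomes $\mathbf{a}_{.j}$, and the row that replaces the $j$th row in $(\mathbf{A}\mathbf{A}^{*})_{j.}(\cdot)$ becomes $\mathbf{a}_{i.}$.

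At this point the expressions in the theorem turn verbatim into (\ref{eq:det_cdet_mp_her}) and (\ref{eq:det_rdet_mp_her}), and the summation indexing over $J_{r,n}\{i\}$ (respectively $I_{r,m}\{j\}=I_{r,n}\{j\}$) is unchanged since the set of index subsets depends only on the size and rank of $\mathbf{A}$, both preserved under the substitution. Thus the derivation is essentially a direct bookkeeping argument, and the only points requiring a sentence of justification are the two identifications of column/row vectors of $\mathbf{A}^{*}$ with those of $\mathbf{A}$.

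There is no real obstacle here beyond tracking notation carefully; the corollary is presented as a Hermitian specialization of Theorem~\ref{th:det_rep_mp}, and the row-column-determinant machinery of Section~2 is already built so that both formulas of Theorem~\ref{th:det_rep_mp} coexist for the same $\mathbf{A}$. The mildly delicate check is to verify that the $\mathrm{cdet}$ and $\mathrm{rdet}$ of the replaced matrices are well-defined in the Hermitian setting; this follows because the matrices $(\mathbf{A}^{2})_{.i}(\mathbf{a}_{.j})$ and $(\mathbf{A}^{2})_{j.}(\mathbf{a}_{i.})$ inherit their structure from $\mathbf{A}^{*}\mathbf{A}=\mathbf{A}^{2}$ exactly as in Theorem~\ref{th:det_rep_mp}, so the column and row determinants are computed in the same way as in the general case.
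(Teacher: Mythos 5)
Your proposal is correct and is exactly the argument the paper intends: the corollary is an immediate specialization of Theorem~\ref{th:det_rep_mp} obtained by substituting $\mathbf{A}^{*}=\mathbf{A}$ (so $\mathbf{A}^{*}\mathbf{A}=\mathbf{A}\mathbf{A}^{*}=\mathbf{A}^{2}$, $\mathbf{a}^{*}_{.j}=\mathbf{a}_{.j}$, and $\mathbf{a}^{*}_{i.}=\mathbf{a}_{i.}$), which is why the paper states it without proof.
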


\begin{cor}\label{cor:det_repr_proj_Q}
If $ {\bf A} \in  {\mathbb{H}}_{r}^{m\times n} $,  then the
projection matrix $ {\bf A}^{ \dag}  {\bf A} = : {\bf
Q}_{A} = \left( {q_{ij}} \right)_{n\times n} $ has the
determinantal representation
  \begin{equation}\label{eq:det_repr_proj_Q}
q_{ij} = {\frac{{{\sum\limits_{\beta \in J_{r,n} {\left\{ {i}
\right\}}} {{\rm{cdet}} _{i} \left( {\left( {{\bf A}^{ *}
{\bf A}} \right)_{.i} \left({\bf \dot{a}}_{.j} \right)}
\right)  _{\beta} ^{\beta} } }}}{{{\sum\limits_{\beta
\in J_{r,n}}  {{ \left| {{\bf A}^{ *}  {\bf
A}} \right|_{\beta}^{\beta} }}}} }},
 \end{equation}
\noindent where $ {\bf \dot{a}}_{.j} $ is the $j$th column of
${ {\bf A}^{ *}  {\bf A}} \in
{\mathbb{H}}^{n\times n}$.
\end{cor}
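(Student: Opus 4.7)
The plan is to compute $q_{ij}$ directly from the defining identity $\mathbf{Q}_A = \mathbf{A}^{\dag}\mathbf{A}$, i.e.\ from $q_{ij}=\sum_{k=1}^{m} a_{ik}^{\dag}\, a_{kj}$, by substituting the determinantal representation (\ref{eq:cdet_repr_A*A}) of $\mathbf{A}^{\dag}$ in terms of column determinants of bordered submatrices of $\mathbf{A}^{*}\mathbf{A}$. This places a scalar $a_{kj}$ to the right of a column determinant whose bordering column is $\mathbf{a}_{.k}^{*}$, which is exactly the setting in which the right-linearity of column determinants applies.

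First, I would fix the principal submatrix index $\beta\in J_{r,n}\{i\}$ and work inside a single term of the numerator sum of (\ref{eq:cdet_repr_A*A}). In the $n\times n$ matrix $(\mathbf{A}^{*}\mathbf{A})_{.i}(\mathbf{a}_{.k}^{*})$ the only column that depends on $k$ is the $i$th column, whose restriction to rows indexed by $\beta$ is the subcolumn $(\mathbf{a}_{.k}^{*})_{\beta}$. Using Lemma \ref{lem:col_combin} applied to this $i$th column of the $\beta\times\beta$ principal submatrix, I can pull the scalar $a_{kj}$ inside and then sum over $k$:
\begin{equation*}
\sum_{k=1}^{m} {\rm{cdet}}_{i}\!\left((\mathbf{A}^{*}\mathbf{A})_{.i}(\mathbf{a}_{.k}^{*})\right)_{\beta}^{\beta}\, a_{kj}
= {\rm{cdet}}_{i}\!\left((\mathbf{A}^{*}\mathbf{A})_{.i}\Bigl(\sum_{k}\mathbf{a}_{.k}^{*}\, a_{kj}\Bigr)\right)_{\beta}^{\beta}.
\end{equation*}
Next, I would identify the resulting bordering column. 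Since $\mathbf{a}_{.k}^{*}$ is the $k$th column of $\mathbf{A}^{*}$, the sum $\sum_{k}\mathbf{a}_{.k}^{*}\, a_{kj}$ is precisely the $j$th column of $\mathbf{A}^{*}\mathbf{A}$, which the statement denotes $\mathbf{\dot{a}}_{.j}$. Summing over $\beta$ and dividing by the (denominator) $\det$-like sum, which is independent of $k$ and $j$, yields the formula (\ref{eq:det_repr_proj_Q}).

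The only delicate point is verifying that the right-linearity of Lemma \ref{lem:col_combin} is really applicable here: the column determinant is taken on a principal submatrix, and one must check that replacing the $i$th column of the full $n\times n$ matrix and then restricting to $\beta$ gives the same subcolumn as replacing the $i$th column of the $\beta\times\beta$ submatrix directly by $(\mathbf{a}_{.k}^{*})_{\beta}$. Because $i\in\beta$, this is immediate from the definition of the column-replacement notation. After this sanity check, the rest of the argument is purely formal linearity and index bookkeeping, so I do not foresee any serious obstacle.
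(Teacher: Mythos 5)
Your argument is correct and is exactly the intended derivation: the paper states this corollary without a separate written proof, but the computation you give — expand $q_{ij}=\sum_k a_{ik}^{\dag}a_{kj}$, insert (\ref{eq:cdet_repr_A*A}), and use the right-linearity of ${\rm cdet}$ (Lemma \ref{lem:col_combin}) to absorb $\sum_k \mathbf{a}_{.k}^{*}a_{kj}$ into the bordering column as the $j$th column of $\mathbf{A}^{*}\mathbf{A}$ — is precisely the linearity argument the author deploys repeatedly in the proofs of Theorems \ref{th:detrep_rcor} and \ref{th:detrep_dmp}. Your remark about restricting to the principal submatrix indexed by $\beta$ with $i\in\beta$ is the right sanity check and closes the only delicate point.
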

\begin{cor}\label{cor:det_repr_proj_P}
If $ {\bf A} \in  {\mathbb{H}}_{r}^{m\times n}$,  then the
projection matrix $ {\bf A} {\bf A}^{ \dag} = : {\bf
P}_{A} = \left( {p_{ij}} \right)_{m\times m} $ has the
determinantal representation

  \begin{equation}
\label{eq:det_repr_proj_P}
p_{ij} = {\frac{{{\sum\limits_{\alpha \in I_{r,m} {\left\{ {j}
\right\}}} {{{\rm{rdet}} _{j} {\left( {({\bf A} {\bf A}^{ *}
)_{j .} ({\bf \ddot{a}}  _{i  .} )}
\right)  _{\alpha} ^{\alpha} } }}}
}}{{{\sum\limits_{\alpha \in I_{r,m}} {{ {\left| {
{\bf A} {\bf A}^{ *} } \right| _{\alpha
}^{\alpha} }  }}} }}},
 \end{equation}
\noindent where ${\bf \ddot{a}} _{i.} $ is the $i$th row of $
{\bf A}{\bf A}^{*}\in  {\mathbb{H}}^{m\times m}$.
\end{cor}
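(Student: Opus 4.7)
The plan is to start from the defining entrywise identity $p_{ij}=\sum_{k=1}^{m} a_{ik}\,a^{\dag}_{kj}$ and substitute the Moore--Penrose formula (\ref{eq:rdet_repr_AA*}) of Theorem~\ref{th:det_rep_mp} for each $a^{\dag}_{kj}$. Since the denominator $\sum_{\alpha\in I_{r,m}}|\mathbf{A}\mathbf{A}^{*}|^{\alpha}_{\alpha}$ does not depend on $k$, it factors out of the sum over $k$, so the whole task reduces to showing, for each fixed $\alpha\in I_{r,m}\{j\}$, the identity
\begin{equation*}
\sum_{k=1}^{m} a_{ik}\,\mathrm{rdet}_{j}\bigl(((\mathbf{A}\mathbf{A}^{*})_{j.}(\mathbf{a}^{*}_{k.}))^{\alpha}_{\alpha}\bigr)=\mathrm{rdet}_{j}\bigl(((\mathbf{A}\mathbf{A}^{*})_{j.}(\ddot{\mathbf{a}}_{i.}))^{\alpha}_{\alpha}\bigr).
\end{equation*}

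The key observation is that the $i$th row of $\mathbf{A}\mathbf{A}^{*}$ is, by direct expansion, a left linear combination of the rows of $\mathbf{A}^{*}$: its $l$th entry equals $\sum_{k=1}^{m} a_{ik}\overline{a_{lk}}$, so $\ddot{\mathbf{a}}_{i.}=\sum_{k=1}^{m} a_{ik}\,\mathbf{a}^{*}_{k.}$ with the scalar coefficients $a_{ik}$ placed on the left of the row vectors. Substituting this combination into the $j$th row of $(\mathbf{A}\mathbf{A}^{*})_{j.}(\ddot{\mathbf{a}}_{i.})$ and restricting to the principal submatrix indexed by $\alpha$ (which still leaves the substituted row as the $j$th row inside the submatrix, just truncated to the columns of $\alpha$), one is in the exact hypothesis of Lemma~\ref{lem:row_combin}. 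Applying that lemma splits the row determinant into the required sum, establishing the identity and hence the corollary after summing over $\alpha\in I_{r,m}\{j\}$.

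The main point requiring care is the noncommutative bookkeeping: Lemma~\ref{lem:row_combin} holds only when the quaternion coefficients appear on the \emph{left} of the row vectors, and this is precisely the order dictated by the expansion of the product $\mathbf{A}\mathbf{A}^{*}$; any accidental swap with $a^{\dag}_{kj}$ would ruin the argument. The only other step to verify is that the passage from the full $m\times m$ matrix to its $\alpha\times\alpha$ principal submatrix commutes with the row-substitution operation, which is immediate because the substitution affects only one row and the restriction only selects columns from $\alpha$ and rows from $\alpha\ni j$. Once these two orientation checks are in place, the proof is essentially a direct calculation paralleling that of Corollary~\ref{cor:det_repr_proj_Q}, with Lemma~\ref{lem:col_combin} and formula (\ref{eq:cdet_repr_A*A}) replaced by their row-side analogues.
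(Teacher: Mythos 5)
Your proof is correct and is exactly the computation this corollary is meant to be read off from: substitute (\ref{eq:rdet_repr_AA*}) into $p_{ij}=\sum_{k} a_{ik}a^{\dag}_{kj}$, observe that ${\bf \ddot{a}}_{i.}$ is the left linear combination $\sum_{k} a_{ik}\,{\bf a}^{*}_{k.}$ of the rows of ${\bf A}^{*}$, and absorb the sum into the substituted row via Lemma~\ref{lem:row_combin}, which is the same mechanism the paper uses in its proofs of Theorems~\ref{th:detrep_rcor} and \ref{th:detrep_dmp}. The only blemish is notational: since ${\bf A}\in{\mathbb{H}}^{m\times n}$, the index $k$ in $\sum_{k}a_{ik}a^{\dag}_{kj}$ and in ${\bf \ddot{a}}_{i.}=\sum_{k}a_{ik}\,{\bf a}^{*}_{k.}$ should run from $1$ to $n$, not to $m$; this does not affect the argument.
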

The following corollary gives determinantal representations of the Moore-Penrose inverse and of  both projectors in complex matrices.
\begin{cor} \cite{kyr}\label{cor:det_repr_MP_c}
Let $ {\bf A} \in  {\mathbb{C}}_{r}^{m\times n} $. Then the
following determinantal representations are obtained
\begin{enumerate}
  \item[(i)] for the Moore-Penrose inverse  ${\rm {\bf A}}^{ \dag} = \left( {a_{ij}^{
\dag} } \right)_{n\times m} $,
  \begin{equation*}
 a_{ij}^{ \dag}  = {\frac{{{\sum\limits_{\beta
\in J_{r,n} {\left\{ {i} \right\}}} { \left|
{\left( { {\bf A}^{ *} {\bf A}} \right)_{. i}
\left( { {\bf a}_{\,.j}^{ *} }  \right)} \right|
 _{\beta} ^{\beta} } } }}{{{\sum\limits_{\beta \in
J_{r,n}} {{\left|  { {\bf A}^{ *}  {\bf A}}
\right| _{\beta} ^{\beta}  }}} }}}=
{\frac{{{\sum\limits_{\alpha \in I_{r,m} {\left\{ {j} \right\}}}
{ \left| {( {\bf A} {\bf A}^{ *}
)_{j.} ( {\bf a}_{\,i.}^{ *} )} \right|_{\alpha}
^{\alpha} } }}}{{{\sum\limits_{\alpha \in I_{r,m}}  {{
{\left| { {\bf A} {\bf A}^{ *} } \right|
_{\alpha} ^{\alpha} } }}} }}};
\end{equation*}
  \item[(ii)] for the
projector $ {\bf
Q}_{A} = \left( {q_{ij}} \right)_{n\times n} $,
  \begin{equation*}
q_{ij} = {\frac{{{\sum\limits_{\beta \in J_{r,n} {\left\{ {i}
\right\}}} { \left| {\left( {{\bf A}^{ *}
{\bf A}} \right)_{.i} \left({\bf \dot{a}}_{.j} \right)}
\right|  _{\beta} ^{\beta} } }}}{{{\sum\limits_{\beta
\in J_{r,n}}  {{ \left| {{\bf A}^{ *}  {\bf
A}} \right|_{\beta}^{\beta} }}}} }},
 \end{equation*}
where $ {\bf \dot{a}}_{.j} $ is the $j$th column of
${ {\bf A}^{ *}  {\bf A}}$;
  \item[(iii)] for the
projector $  {\bf
P}_{A} = \left( {p_{ij}} \right)_{m\times m} $,
 \begin{equation*}
p_{ij} = {\frac{{{\sum\limits_{\alpha \in I_{r,m} {\left\{ {j}
\right\}}} {{ {\left| {({\bf A} {\bf A}^{ *}
)_{j .} ({\bf \ddot{a}}  _{i  .} )}
\right|  _{\alpha} ^{\alpha} } }}}
}}{{{\sum\limits_{\alpha \in I_{r,m}} {{ {\left| {
{\bf A} {\bf A}^{ *} } \right| _{\alpha
}^{\alpha} }  }}} }}},
 \end{equation*}
where ${\bf \ddot{a}} _{i.} $ is the $i$th row of $
{\bf A}{\bf A}^{*}$.
\end{enumerate}
\end{cor}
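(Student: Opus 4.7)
The plan is to derive Corollary \ref{cor:det_repr_MP_c} as a direct specialization to ${\mathbb C}$ of the quaternionic results stated earlier, namely Theorem \ref{th:det_rep_mp} together with Corollaries \ref{cor:det_repr_proj_Q} and \ref{cor:det_repr_proj_P}. The whole argument hinges on one preliminary observation that I would establish first: for any ${\bf B}\in{\mathbb C}^{n\times n}$, the row and column determinants of the two definitions in Section 2 coincide with the ordinary determinant, i.e. ${\rm rdet}_i\,{\bf B}={\rm cdet}_j\,{\bf B}=\det{\bf B}$ for every $i,j\in\{1,\ldots,n\}$.

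To verify the reduction, I would fix a permutation $\sigma\in S_n$ written in cycle form with $r$ cycles, and look at the corresponding summand of ${\rm rdet}_i\,{\bf B}$ (or ${\rm cdet}_j\,{\bf B}$). It is a signed product, with prefactor $(-1)^{n-r}$, of $n$ complex entries of ${\bf B}$ indexed around the cycles of $\sigma$. Because complex multiplication is commutative, the ordering of factors within each cycle is immaterial, and the starting element of each cycle (which is what distinguishes the row/column-determinant definitions from the usual one) becomes irrelevant. Noting that $(-1)^{n-r}={\rm sgn}(\sigma)$, each summand equals the classical Leibniz term ${\rm sgn}(\sigma)\prod_{k}b_{k,\sigma(k)}$; summing over $S_n$ then yields $\det{\bf B}$, independent of $i$ and $j$.

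With this reduction in place, the corollary is essentially transcription. For item (i), substituting ${\rm cdet}_i\mapsto\det$ inside the numerator of (\ref{eq:cdet_repr_A*A}) and ${\rm rdet}_j\mapsto\det$ inside the numerator of (\ref{eq:rdet_repr_AA*}), and similarly on the denominators via the principal minors appearing there, gives the two expressions claimed in (i); their equality is inherited from Theorem \ref{th:det_rep_mp}, because both compute the unique entry $a_{ij}^{\dag}$. Items (ii) and (iii) follow by applying the same substitution to Corollaries \ref{cor:det_repr_proj_Q} and \ref{cor:det_repr_proj_P} respectively. The only point requiring a moment's care is to check that every matrix sitting inside a determinantal symbol genuinely has entries in ${\mathbb C}$ so that the reduction is applicable; this is automatic, since ${\bf A}\in{\mathbb C}_r^{m\times n}$ forces ${\bf A}^{*}{\bf A}$, ${\bf A}{\bf A}^{*}$ together with their one-column- or one-row-modified variants to remain complex. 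I do not foresee any genuine obstacle: the substance of the proof is the commutativity reduction, and the remainder is bookkeeping.
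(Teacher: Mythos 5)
Your proposal is correct and follows essentially the route the paper intends: the corollary is presented as the complex specialization of Theorem \ref{th:det_rep_mp} and Corollaries \ref{cor:det_repr_proj_Q}--\ref{cor:det_repr_proj_P}, and your preliminary observation that ${\rm rdet}_i$ and ${\rm cdet}_j$ collapse to the ordinary determinant over ${\mathbb C}$ (via $(-1)^{n-r}={\rm sgn}(\sigma)$ with fixed points counted as $1$-cycles) is exactly the reduction that justifies replacing each ${\rm rdet}$/${\rm cdet}$ by $\left|\cdot\right|$. The remaining transcription and the equality of the two expressions in (i) are handled as in the quaternionic statements, so nothing is missing.
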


\begin{lem}\cite{kyr5}\label{lem:det_repr_draz} If $ {\bf A} \in {\mathbb{H}}^{n\times n}$  with
$ \Ind  {\bf A}=k$ and  then the Drazin inverse  $ {\bf A}^{ d}=\left(a^{d}_{ij}\right)\in {\mathbb{H}}^{n\times n} $ possess the
determinantal representations \begin{itemize}
                                \item[(i)] if $ {\bf A}$ is an arbitrary, then
\begin{equation}
\label{eq:cdet_draz} a_{ij} ^{d}=
 {\frac{{ \sum\limits_{t = 1}^{n} {a}_{it}^{(k)}   {\sum\limits_{\beta \in J_{r,\,n} {\left\{ {t}
\right\}}} {{\rm{cdet}} _{t} \left( {\left(\left({\bf A}^{ 2k+1} \right)^{*}{\bf A}^{ 2k+1} \right)_{.\, t} \left( \hat{{\rm {\bf a}}}_{.\,j}
\right)} \right) _{\beta} ^{\beta} } }
}}{{{\sum\limits_{\beta \in J_{r,n}} {{\left| {\left({\bf A}^{ 2k+1} \right)^{*}{\bf A}^{ 2k+1}
}  \right|_{\beta} ^{\beta}}}} }}}
\end{equation}
 and \begin{equation}
\label{eq:rdet_draz} a_{ij} ^{d}=
{\frac{\sum\limits_{s = 1}^{n}\left({{\sum\limits_{\alpha \in I_{r,\,n} {\left\{ {s}
\right\}}} {{\rm{rdet}} _{s} \left( {\left( { {\bf A}^{ 2k+1} \left({\bf A}^{ 2k+1} \right)^{*}
} \right)_{\,. s} (\check{{\rm {\bf a}}}_{i\,.})} \right) {\kern 1pt} _{\alpha} ^{\alpha} } }
}\right){a}_{sj}^{(k)}}{{{\sum\limits_{\alpha \in I_{r,\,n}} {{\left| { { {\bf A}^{ 2k+1} \left({\bf A}^{ 2k+1} \right)^{*}
}
}  \right|_{\alpha} ^{\alpha}}}} }}}
\end{equation}
where    $\rk {\bf A}^{k+1} =\rk {\bf A}^{k}=r$,  $\hat{{\rm {\bf a}}}_{.j}$ is the $j$th column of $
({\bf A}^{ 2k+1})^{*} {\bf A}^{k}=:\hat{{\rm {\bf A}}}=
(\hat{a}_{ij})\in {\mathbb{H}}^{n\times n}$, and $\check{{{\rm {\bf a}}}}_{i.}$ the $t$th row of $
{\bf A}^{k}({\bf A}^{ 2k+1})^{*} =:\check{{\rm {\bf A}}}=
(\check{a}_{ij})\in {\mathbb{H}}^{n\times n}$;

                                \item [(ii)] if $ {\bf A}$ is Hermitian, then
\begin{equation}
\label{eq:dr_rep_cdet} a_{ij}^{d}  = {\frac{{{\sum\limits_{\beta
\in J_{r,n} {\left\{ {i} \right\}}} {{\rm{cdet}} _{i} \left(
{\left( {{\rm {\bf A}}^{k+1}} \right)_{. i} \left( {{\rm {\bf
a}}_{.j}^{ (k)} }  \right)} \right) _{\beta}
^{\beta} } } }}{{{\sum\limits_{\beta \in J_{r,n}} {{\left|
{{\bf A}^{k+1}
}  \right| _{\beta} ^{\beta}}}} }}},
\end{equation}
or
\begin{equation}
\label{eq:dr_rep_rdet} a_{ij}^{d}  = {\frac{{{\sum\limits_{\alpha
\in I_{r,n} {\left\{ {j} \right\}}} {{\rm{rdet}} _{j} \left(
{({\bf A}^{ k+1} )_{j.} \left( {\bf a}_{i.}^{ (k)} \right)}
\right)_{\alpha} ^{\alpha} } }}}{{{\sum\limits_{\alpha \in
I_{r,n}}  {{\left| {\bf A}^{k+1}    \right|_{\alpha} ^{\alpha}}}} }}},
\end{equation}
where $\rk {\bf A}^{k+1} =\rk {\bf A}^{k}=r$, ${\bf {a}}^{(k)} _{.j} $ and ${\bf {a}}^{(k)} _{i.} $ are the $j$th column and the $i$th row of ${\bf A}^{k}$, respectively.
 \end{itemize}
\end{lem}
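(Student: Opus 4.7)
The strategy is to reduce the Drazin-inverse computation to a Moore--Penrose computation on a related matrix, then invoke Theorem~\ref{th:det_rep_mp} together with the row/column linearity from Lemmas~\ref{lem:row_combin} and~\ref{lem:col_combin}.

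For part~(i), I start from the classical identity
\[
\mathbf{A}^{d}=\mathbf{A}^{k}\bigl(\mathbf{A}^{2k+1}\bigr)^{\dag}\mathbf{A}^{k},
\]
valid for any $k\ge \Ind\,\mathbf{A}$ (established, e.g., in \cite{kyr5} and directly checkable via the three Drazin axioms). Set $\mathbf{B}:=\mathbf{A}^{2k+1}$; then $\rk\mathbf{B}=\rk\mathbf{A}^{k}=r$, and Theorem~\ref{th:det_rep_mp} supplies both cdet- and rdet-representations of $b^{\dag}_{st}$. To derive (\ref{eq:cdet_draz}), factor $\mathbf{A}^{d}=\mathbf{A}^{k}\cdot(\mathbf{B}^{\dag}\mathbf{A}^{k})$ and compute
\[
(\mathbf{B}^{\dag}\mathbf{A}^{k})_{sj}=\sum_{t}b^{\dag}_{st}\,a^{(k)}_{tj}.
\]
The scalars $a^{(k)}_{tj}$ stand to the \emph{right} of the cdet-expression for $b^{\dag}_{st}$, so column-linearity (Lemma~\ref{lem:col_combin}) pulls them inside the substituted column. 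The combined column equals
\[
\sum_{t}\mathbf{b}^{*}_{.t}\,a^{(k)}_{tj}=\bigl((\mathbf{A}^{2k+1})^{*}\mathbf{A}^{k}\bigr)_{.j}=\hat{\mathbf{a}}_{.j},
\]
matching the numerator of (\ref{eq:cdet_draz}) after a final sum over $s$ against the $i$th row of $\mathbf{A}^{k}$. Formula (\ref{eq:rdet_draz}) follows from the mirror factorization $\mathbf{A}^{d}=(\mathbf{A}^{k}\mathbf{B}^{\dag})\mathbf{A}^{k}$, the rdet-form of Theorem~\ref{th:det_rep_mp}, and row-linearity (Lemma~\ref{lem:row_combin}), which merges $\mathbf{A}^{k}$ with $\mathbf{B}^{*}=(\mathbf{A}^{2k+1})^{*}$ into $\check{\mathbf{a}}_{i.}$.

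For part~(ii), since $\mathbf{A}$ is Hermitian it is diagonalizable, so $\Ind\,\mathbf{A}\le 1$ and $\mathbf{A}^{d}=\mathbf{A}^{\dag}$. When $k=1$, Corollary~\ref{cor:det_repr_her_mp} applied to the Hermitian matrix $\mathbf{A}$ yields exactly (\ref{eq:dr_rep_cdet}) upon noting $\mathbf{A}^{k+1}=\mathbf{A}^{2}$ and that $\mathbf{a}^{(k)}_{.j}$ is the $j$th column of $\mathbf{A}=\mathbf{A}^{*}$. When $k=0$, the matrix is invertible Hermitian, $r=n$, and the formula degenerates into the classical Cramer's rule for Hermitian matrices established in \cite{kyr3}. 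The companion identity (\ref{eq:dr_rep_rdet}) follows analogously from the rdet-part of Corollary~\ref{cor:det_repr_her_mp}.

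\textbf{Main obstacle.} The principal technical subtlety is the noncommutative bookkeeping when moving quaternion scalars through cdet/rdet operators: Lemma~\ref{lem:col_combin} only handles scalars on the right of a column, while Lemma~\ref{lem:row_combin} only handles scalars on the left of a row. Consequently the two formulas in part~(i) must come from the two different factorizations $\mathbf{A}^{k}(\mathbf{B}^{\dag}\mathbf{A}^{k})$ and $(\mathbf{A}^{k}\mathbf{B}^{\dag})\mathbf{A}^{k}$ respectively; using the wrong factorization would force an illegal scalar move. Once this bookkeeping is tracked correctly, the identifications of the combined column and row as $\hat{\mathbf{a}}_{.j}$ and $\check{\mathbf{a}}_{i.}$ are routine matrix-product computations.
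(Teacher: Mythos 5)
This lemma is quoted from \cite{kyr5} and the present paper contains no proof of it, so your argument can only be measured against the evident intended derivation --- which it matches: the representation ${\bf A}^{d}={\bf A}^{k}\left({\bf A}^{2k+1}\right)^{\dag}{\bf A}^{k}$ combined with Theorem~\ref{th:det_rep_mp} applied to ${\bf B}={\bf A}^{2k+1}$ is precisely what produces the Gram matrices $\left({\bf A}^{2k+1}\right)^{*}{\bf A}^{2k+1}$ and ${\bf A}^{2k+1}\left({\bf A}^{2k+1}\right)^{*}$ appearing in (\ref{eq:cdet_draz})--(\ref{eq:rdet_draz}), and your pairing of the factorization ${\bf A}^{k}\left({\bf B}^{\dag}{\bf A}^{k}\right)$ with cdet/right column-linearity (Lemma~\ref{lem:col_combin}) and of $\left({\bf A}^{k}{\bf B}^{\dag}\right){\bf A}^{k}$ with rdet/left row-linearity (Lemma~\ref{lem:row_combin}) is exactly the noncommutative bookkeeping that the two formulas require, with the remaining quaternion factors correctly left outside the determinants as the sums over $t$ and $s$. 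In part~(ii) you take a legitimate shortcut: instead of simplifying the general formulas of part~(i) under Hermiticity, you note that a Hermitian quaternion matrix has $\Ind{\bf A}\le 1$ and ${\bf A}^{d}={\bf A}^{\dag}$, then invoke Corollary~\ref{cor:det_repr_her_mp}; this is valid (one does not even need diagonalizability, since ${\bf A}^{2}{\bf x}=0$ implies ${\bf x}^{*}{\bf A}^{*}{\bf A}{\bf x}=0$ and hence ${\bf A}{\bf x}=0$, so $\rk{\bf A}^{2}=\rk{\bf A}$) and it covers the only values $k\in\{0,1\}$ that can occur for the Hermitian case as the lemma is stated.
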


\begin{cor}If $ {\bf A} \in {\mathbb{H}}^{n\times n}$  with
$ \Ind {\bf A}=1$, then the group inverse  $ {\bf A}^{\#} $ possess the
determinantal representations \begin{itemize}
                                \item[(i)] if ${\bf A}$ is an arbitrary, then
\begin{equation}
\label{eq:cdet_gr} a_{ij} ^{\#}=
 {\frac{{ \sum\limits_{t = 1}^{n} {a}_{it}  {\sum\limits_{\beta \in J_{r,n} {\left\{ {t}
\right\}}} {{\rm{cdet}} _{t} \left( {\left(\left({\bf A}^{ 3} \right)^{*}{\bf A}^{ 3} \right)_{. \,t} \left( \hat{{\bf a}}_{.j}
\right)} \right) _{\beta} ^{\beta} } }
}}{{{\sum\limits_{\beta \in J_{r,n}} {{\left| {\left({\bf A}^{3} \right)^{*}{\bf A}^{ 3}
}  \right|_{\beta} ^{\beta}}}} }}}
\end{equation}
 and \begin{equation}
\label{eq:rdet_gr} a_{ij} ^{\#}=
{\frac{\sum\limits_{s = 1}^{n}\left({{\sum\limits_{\alpha \in I_{r,n} {\left\{ {s}
\right\}}} {{\rm{rdet}} _{s} \left( {\left( { {\bf A}^{ 3} \left({\bf A}^{ 3} \right)^{*}
} \right)_{\,. s} (\check{ {\bf a}}_{i.})} \right) _{\alpha} ^{\alpha} } }
}\right){a}_{sj}}{{{\sum\limits_{\alpha \in I_{r,\,n}} {{\left| { { {\bf A}^{ 3} \left({\bf A}^{3} \right)^{*}
}
}  \right|_{\alpha} ^{\alpha}}}} }}}
\end{equation}
where   $\rk {\bf A}^{2} =\rk {\bf A} =
 r$,   $\hat{ {\bf a}}_{.j}$ is the $j$th column of $
({\bf A}^{ 3})^{*} {\bf A}=:\hat{ {\bf A}}=
(\hat{a}_{ij})\in {\mathbb{H}}^{n\times n}$ and $\check{\bf a}_{i.}$ the $i$th row of $
{\bf A}({\bf A}^{ 3})^{*} =:\check{{\bf A}}=
(\check{a}_{ij})\in {\mathbb{H}}^{n\times n}$;

                                \item [(ii)] if $ {\bf A}$ is Hermitian, then
\begin{equation}
\label{eq:gr_rep_cdet} a_{ij}^{\#}  = {\frac{{{\sum\limits_{\beta
\in J_{r,n} {\left\{ {i} \right\}}} {{\rm{cdet}} _{i} \left(
{\left( {{\bf A}^{2}} \right)_{.i} \left( { {\bf
a}_{.j}}  \right)} \right) _{\beta}
^{\beta} } } }}{{{\sum\limits_{\beta \in J_{r,n}} {{\left|
{ {\bf A}^{2}
}  \right| _{\beta} ^{\beta}}}} }}},
\end{equation}
or
\begin{equation}
\label{eq:gr_rep_rdet} a_{ij}^{\#}  = {\frac{{{\sum\limits_{\alpha
\in I_{r,n} {\left\{ {j} \right\}}} {{\rm{rdet}} _{j} \left(
{( {\bf A}^{ 2} )_{j.} \left( {\bf a}_{i.}\right)}
\right)_{\alpha} ^{\alpha} } }}}{{{\sum\limits_{\alpha \in
I_{r,n}}  {{\left|  {\bf A}^{2}  \right|_{\alpha} ^{\alpha}}}} }}}.
\end{equation}
where   $\rk {\bf A}^{2} =\rk {\bf A} =
 r$.
  \end{itemize}
\end{cor}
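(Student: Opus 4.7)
The plan is to derive this corollary as a direct specialization of Lemma \ref{lem:det_repr_draz} (the Drazin inverse case) to $k=1$, together with the observation that by definition ${\bf A}^{\#} = {\bf A}^{d}$ precisely when $\Ind {\bf A}=1$. So the content is almost entirely bookkeeping — the main work is to recognize which symbols collapse and to track the rank hypothesis.

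First I would record that $\Ind {\bf A}=1$ forces the index $k$ appearing in Lemma \ref{lem:det_repr_draz} to equal $1$, so that ${\bf A}^{k}={\bf A}$, ${\bf A}^{k+1}={\bf A}^{2}$, and $2k+1=3$. In particular ${a}_{it}^{(k)}=a_{it}$ and ${\bf a}_{.j}^{(k)}={\bf a}_{.j}$, ${\bf a}_{i.}^{(k)}={\bf a}_{i.}$. The rank hypothesis $\rk {\bf A}^{k+1}=\rk {\bf A}^{k}=r$ becomes $\rk {\bf A}^{2}=\rk {\bf A}=r$, which is exactly the rank condition stated in the corollary (and which is automatic from $\Ind {\bf A}=1$).

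For part (i), I would substitute these specializations into (\ref{eq:cdet_draz}) and (\ref{eq:rdet_draz}). The matrix $({\bf A}^{2k+1})^{*}{\bf A}^{2k+1}$ becomes $({\bf A}^{3})^{*}{\bf A}^{3}$, so the Hermitian matrix whose principal minors appear in the denominator is the same $({\bf A}^{3})^{*}{\bf A}^{3}$ used in (\ref{eq:cdet_gr}) and (\ref{eq:rdet_gr}). The auxiliary matrices $\hat{{\bf A}}=({\bf A}^{2k+1})^{*}{\bf A}^{k}$ and $\check{{\bf A}}={\bf A}^{k}({\bf A}^{2k+1})^{*}$ become $({\bf A}^{3})^{*}{\bf A}$ and ${\bf A}({\bf A}^{3})^{*}$ respectively, so their columns $\hat{{\bf a}}_{.j}$ and rows $\check{{\bf a}}_{i.}$ match those in (\ref{eq:cdet_gr})–(\ref{eq:rdet_gr}). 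After these identifications the formulas (\ref{eq:cdet_draz}) and (\ref{eq:rdet_draz}) read exactly as (\ref{eq:cdet_gr}) and (\ref{eq:rdet_gr}).

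For part (ii), under the additional assumption that ${\bf A}$ is Hermitian, I would substitute $k=1$ into (\ref{eq:dr_rep_cdet}) and (\ref{eq:dr_rep_rdet}). Then ${\bf A}^{k+1}={\bf A}^{2}$ and ${\bf a}_{.j}^{(k)}$, ${\bf a}_{i.}^{(k)}$ reduce to the ordinary $j$th column and $i$th row of ${\bf A}$, giving exactly (\ref{eq:gr_rep_cdet}) and (\ref{eq:gr_rep_rdet}).

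No step is a genuine obstacle; there are no new calculations. The only care required is to make sure that the rank condition $\rk {\bf A}^{2}=\rk {\bf A}=r$ is compatible with the quantifier $\beta\in J_{r,n}$, $\alpha\in I_{r,n}$ appearing in the sums, which is immediate because that $r$ is defined exactly by this rank identity in both lemma and corollary.
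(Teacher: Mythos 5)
Your proposal is correct and coincides with the paper's (implicit) argument: the corollary is stated as an immediate specialization of Lemma \ref{lem:det_repr_draz} to $k=1$, using that ${\bf A}^{\#}={\bf A}^{d}$ when $\Ind{\bf A}=1$, and your bookkeeping of the substitutions ${\bf A}^{k}={\bf A}$, ${\bf A}^{2k+1}={\bf A}^{3}$, and $\rk{\bf A}^{2}=\rk{\bf A}=r$ is exactly what is needed.
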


The following corollary gives determinantal representations of the Drazin inverse  in complex matrices.
\begin{cor} \cite{kyr}\label{cor:det_repr_Dr_c}
Let $ {\bf A} \in  {\mathbb{C}}^{n\times n} $ with
$ \Ind  {\bf A}=k$ and $\rk {\bf A}^{k+1} =\rk {\bf A}^{k}
 =r$. Then the Drazin inverse $ {\bf A}^{d} = \left( {a_{ij}^{
d} } \right) \in  {\mathbb{C}}_{}^{n\times n} $ has the
following determinantal representations
  \begin{equation}
\label{eq:dr_rep_det_c} a_{ij}^{d}  = {\frac{{{\sum\limits_{\beta
\in J_{r,n} {\left\{ {i} \right\}}} { \left|
{\left( {{\bf A}^{k+1}} \right)_{. i} \left( {{\rm {\bf
a}}_{.j}^{ (k)} }  \right)} \right| _{\beta}
^{\beta} } } }}{{{\sum\limits_{\beta \in J_{r,n}} {{\left|
{{\bf A}^{k+1}
}  \right| _{\beta} ^{\beta}}}} }}}=
{\frac{{{\sum\limits_{\alpha
\in I_{r,n} {\left\{ {j} \right\}}} { \left|
{({\bf A}^{ k+1} )_{j.} \left( {\bf a}_{i.}^{ (k)} \right)}
\right|_{\alpha} ^{\alpha} } }}}{{{\sum\limits_{\alpha \in
I_{r,n}}  {{\left| {\bf A}^{k+1}    \right|_{\alpha} ^{\alpha}}}} }}},
\end{equation}
where ${\bf {a}}^{(k)} _{.j} $ and ${\bf {a}}^{(k)} _{i.} $ are the $j$th column and the $i$th row of ${\bf A}^{k}$, respectively.
  \end{cor}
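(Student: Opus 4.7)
The plan is to obtain this corollary as the commutative specialization of Lemma~\ref{lem:det_repr_draz}. The first step is to observe that for any complex matrix $\mathbf{M}\in\mathbb{C}^{n\times n}$, each of the row determinants ${\rm rdet}_i\mathbf{M}$ and column determinants ${\rm cdet}_j\mathbf{M}$ of Definitions~2.1--2.2 collapses to the standard determinant $\det\mathbf{M}$, because the cycle-orientation conventions that force them to differ in the noncommutative quaternion setting become irrelevant as soon as the entries commute. In particular, every symbol ${\rm cdet}_i((\ldots)_{.\,i}(\mathbf{b}))_\beta^\beta$ and $|\,\mathbf{N}\,|_\beta^\beta$ appearing in Lemma~\ref{lem:det_repr_draz} reduces to an ordinary complex principal minor or column-replacement minor.

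With this identification in hand, I would start from the general quaternion representation (\ref{eq:cdet_draz}) and simplify it in the complex case. Using the factorization $\mathbf{A}^{2k+1}=\mathbf{A}^{k+1}\mathbf{A}^{k}$ together with the Cauchy--Binet formula applied to the Hermitian matrix $(\mathbf{A}^{2k+1})^{*}\mathbf{A}^{2k+1}$, the denominator expands as a sum over $\beta\in J_{r,n}$ and $\alpha\in L_{r,n}$ of products $|\mathbf{A}^{2k+1}|_\alpha^\beta\cdot\overline{|\mathbf{A}^{2k+1}|_\alpha^\beta}$, and a parallel Cauchy--Binet splitting of the numerator yields a common nonnegative factor summed against an inner minor of $\mathbf{A}^{k+1}$. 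After substituting $\hat{\mathbf{A}}=(\mathbf{A}^{2k+1})^{*}\mathbf{A}^{k}$ and carrying the inner summation $\sum_{t} a_{it}^{(k)}(\cdot)$ through the Cauchy--Binet expansion, the $\mathbf{A}^{k}$-factors and the conjugate norms cancel between numerator and denominator, leaving the compact ratio in the statement. The row-version formula follows by the dual simplification of (\ref{eq:rdet_draz}).

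The main obstacle is the combinatorial bookkeeping of this double Cauchy--Binet cancellation: one must show that the inner summation over the index $t$ recombines precisely with the column $\mathbf{a}_{.\,j}^{(k)}$ to reconstruct the single column-replacement $(\mathbf{A}^{k+1})_{.\,i}(\mathbf{a}_{.\,j}^{(k)})$, and that all conjugate-squared factors in the denominator match those implicit in the numerator. A cleaner alternative, which I would pursue in parallel, bypasses Lemma~\ref{lem:det_repr_draz} entirely: since $\mathcal{R}(\mathbf{A}^{k+1})=\mathcal{R}(\mathbf{A}^{k})$ and $\mathbf{A}^{d}$ is the unique solution of $\mathbf{A}^{k+1}\mathbf{X}=\mathbf{A}^{k}$ whose column space is contained in $\mathcal{R}(\mathbf{A}^{k})$, one can apply a generalized Cramer's rule for consistent singular systems, summing over all $r\times r$ principal minors containing the replaced column, to express each entry of that distinguished solution directly in the stated form. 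This route uses only classical complex-determinant identities and sidesteps the quaternion Cauchy--Binet manipulation altogether.
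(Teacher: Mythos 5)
This corollary carries no proof in the paper at all: it is imported verbatim from \cite{kyr}, where it is obtained for complex matrices by classical means (in essence the limit representation ${\bf A}^{d}=\lim_{\lambda\to 0}\,(\lambda {\bf I}+{\bf A}^{k+1})^{-1}{\bf A}^{k}$ combined with the expansion of $\det(\lambda {\bf I}+{\bf A}^{k+1})$ into sums of principal minors). Your primary route, however, would fail. Formula (\ref{eq:dr_rep_det_c}) is \emph{not} the commutative specialization of part (i) of Lemma \ref{lem:det_repr_draz}: that formula is deliberately built on the Hermitian matrix $({\bf A}^{2k+1})^{*}{\bf A}^{2k+1}$, because noncommutative row-column determinants are only well behaved for Hermitian arguments, whereas (\ref{eq:dr_rep_det_c}) uses principal minors of the generally non-Hermitian matrix ${\bf A}^{k+1}$ itself. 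The two expressions are genuinely different rational functions of the entries of ${\bf A}$: the denominator of (\ref{eq:cdet_draz}) equals the positive real number $\sum_{\alpha,\beta}\bigl|\,|{\bf A}^{2k+1}|_{\beta}^{\alpha}\,\bigr|^{2}$ by Cauchy--Binet, while $\sum_{\beta}\left|{\bf A}^{k+1}\right|_{\beta}^{\beta}$ is the $r$th elementary symmetric function of the eigenvalues of ${\bf A}^{k+1}$ and is in general non-real. Hence there is no term-by-term or common-factor ``cancellation of the conjugate norms'' that turns one fraction into the other; the identity (numerator)$_1\cdot$(denominator)$_2=$ (numerator)$_2\cdot$(denominator)$_1$ is true only because both fractions equal $a_{ij}^{d}$, which is precisely what is to be proved. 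Part (ii) of the Lemma does have the right shape, but it is asserted only for Hermitian ${\bf A}$, so it cannot be specialized to arbitrary complex ${\bf A}$ either.

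Your alternative route is the correct one and is essentially the argument of the cited source, but as written it delegates the decisive step to an unspecified ``generalized Cramer's rule.'' To close it: since $\Ind{\bf A}=k$ gives $\mathcal{R}({\bf A}^{k+1})=\mathcal{R}({\bf A}^{k})$, every column of $({\bf A}^{k+1})_{.i}({\bf a}_{.j}^{(k)})$ lies in an $r$-dimensional subspace, so all of its minors of order greater than $r$ vanish; likewise $\sum_{\beta\in J_{s,n}}\left|{\bf A}^{k+1}\right|_{\beta}^{\beta}=0$ for $s>r$ and is nonzero for $s=r$ because ${\bf A}^{k+1}$ is a group matrix with exactly $r$ nonzero eigenvalues. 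Feeding these two facts into the ordinary Cramer expansion of $(\lambda{\bf I}+{\bf A}^{k+1})^{-1}{\bf A}^{k}$ and letting $\lambda\to 0$ yields the first equality in (\ref{eq:dr_rep_det_c}); the row version follows dually from ${\bf A}^{k}(\lambda{\bf I}+{\bf A}^{k+1})^{-1}$. You should either carry out this limit computation or cite a concrete restricted Cramer rule; invoking it as a black box leaves the proof incomplete.
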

It is clear that the determinantal representations of the group inverse  in complex matrices have been obtained from (\ref{eq:dr_rep_det_c}) by putting $k=1$.

Due to quaternion-scalar multiplying on the right, quaternion column-vectors  form a right  vector $\mathbb{H}$-space, and, by quaternion-scalar multiplying on the left, quaternion row-vectors  form a left  vector $\mathbb{H}$-space denoted by $\mathcal{H}_{r}$ and $\mathcal{H}_{l}$, respectively. It can be shown that $\mathcal{H}_{r}$ and $\mathcal{H}_{l}$ possess corresponding $\mathbb{H}$-valued inner products by putting $\langle \mathbf{x},\mathbf{y}\rangle_{r}=\overline{y}_1x_{1}+\cdots+\overline{y}_{n}x_{n}$ for $\mathbf{x}=\left(x_{i}\right)_{i=1}^{n}, \mathbf{y}=\left(y_{i}\right)_{i=1}^{n}\in \mathcal{H}_{r}$, and $\langle \mathbf{x},\mathbf{y}\rangle_{l}=x_{1}\overline{y}_1+\cdots+x_{n}\overline{y}_{n}$ for $\mathbf{x}, \mathbf{y}\in \mathcal{H}_{l}$ that satisfy the  inner product relations, namely, conjugate symmetry, linearity, and positive-definiteness but with specialties
 \begin{equation*}\begin{gathered}\langle \mathbf{x}\alpha+\mathbf{y}\beta,\mathbf{z} \rangle=\langle \mathbf{x},\mathbf{z}\rangle\alpha+\langle \mathbf{y},\mathbf{z}\rangle\beta \,\,\mbox{ when}\,\, \mathbf{x}, \mathbf{y}, \mathbf{z} \in \mathcal{H}_{r},\\
          \langle \alpha\mathbf{x}+\beta\mathbf{y},\mathbf{z} \rangle=\alpha\langle \mathbf{x},\mathbf{z}\rangle+\beta\langle \mathbf{y},\mathbf{z}\rangle \,\, \mbox{ when}\,\, \mathbf{x}, \mathbf{y}, \mathbf{z} \in \mathcal{H}_{l},\end{gathered}\end{equation*}
 for any $\alpha, \beta \in {\mathbb{H}}$.

So, an arbitrary quaternion matrix induct vector spaces that introduced  by the following definition.
\begin{defn}
For an arbitrary matrix over the quaternion skew field, ${\bf A}\in  {\mathbb{H}}^{m\times n}$, we denote by
\begin{itemize}
  \item  $\mathcal{R}_{r}({\rm {\bf A}})=\{ {\bf y}\in {\mathbb{H}}^{m\times 1} : {\bf y} = {\bf A}{\bf x},\,  {\bf x} \in {\mathbb{H}}^{n\times 1}\},$  the right column  space of ${\bf A}$,
  \item  $\mathcal{N}_{r}({\rm {\bf A}})=\{ {\bf x}\in {\mathbb{H}}^{n\times 1} : \,\, {\bf A}{\bf x}=0\}$,  the right null space of  ${\bf A}$,
  \item $\mathcal{R}_{l}({\rm {\bf A}})=\{ {\bf y}\in {\mathbb{H}}^{1\times n} : \,\,{\bf y} = {\bf x}{\bf A},\,\,  {\bf x} \in {\mathbb{H}}^{1\times m}\}$, the  left row space of ${\bf A}$,
  \item  $\mathcal{N}_{l}({\rm {\bf A}})=\{ {\bf x}\in {\mathbb{H}}^{1\times m} : \,\, {\bf x}{\bf A}=0\}$,  the left null space of  ${\bf A}$.
\end{itemize}
\end{defn}

\section{Determinantal representations of the core inverse and its generalizations}

\subsection{Determinantal representations of the core inverses}

Due to   quaternion noncommutativity, Definition \ref{def:cor} of the core inverse can be expand to matrices over ${\mathbb{H}}$ as follows.
\begin{defn}
A matrix ${\bf X} \in  {\mathbb{H}}^{n\times n}$ is called \emph{the right core inverse} of ${\bf A}  \in  {\mathbb{H}}^{n\times n}$
if it satisfies the conditions
$$
{\bf A}{\bf X}={\bf P}_A,~and~ \mathcal{R}_r({\bf X})=\mathcal{R}_r({\bf A}).$$
When such matrix ${\bf X}$ exists, it is denoted $ {\bf A}^{\tiny\textcircled{\#}}$.
\end{defn}
\begin{defn}\label{def:lcor}
A matrix ${\bf X} \in  {\mathbb{H}}^{n\times n}$ is called \emph{the left core inverse} of $ {\bf A} \in  {\mathbb{H}}^{n\times n}$
if it satisfies the conditions
$$
{\bf X}{\bf A}={\bf Q}_A,~and~ \mathcal{R}_l({\bf X})=\mathcal{R}_l({\bf A}).$$
When such matrix ${\bf X}$ exists, it is denoted $ {\bf A}_{\tiny\textcircled{\#}}$.
\end{defn}
\begin{rem}
A definition similar to Definition \ref{def:lcor} has been introduced for  $ {\bf A}\in{\mathbb{C}}^{n\times n}$ in \cite{zh_arx} given that
${\bf P}_{A^*}={\bf A}^*({\bf A}^*)^{\dag}=\left({\bf A}^{\dag}{\bf A}\right)^*={\bf A}^{\dag}{\bf A}={\bf Q}_A$, and $\mathcal{R}_l({\bf A})=\mathcal{R}_r({\bf A}^*)$. In \cite{zh_arx}, $ {\bf A}_{\tiny\textcircled{\#}}$ is called the dual core inverse of ${\bf A}$.
\end{rem}
Due to \cite{baks}, we introduce the following sets of quaternion matrices
\begin{align*}
{\mathbb H}_n^{\mathrm{CM}}=&\{{\bf A}\in  {\mathbb{H}}^{n\times n}~:~\rk {\bf A}^2=\rk {\bf A}\},\\
{\mathbb H}_n^{\mathrm{EP}}=&\{{\bf A}\in  {\mathbb{H}}^{n\times n}~:~ {\bf A}^{\dag}{\bf A}= {\bf A}{\bf A}^{\dag}\}=\{\mathcal{R}_r({\bf A})=\mathcal{R}_r({\bf A}^*)\}.
\end{align*}
The matrices from ${\mathbb H}_n^{\mathrm{CM}}$ are called group matrices or core matrices.
If ${\bf A}\in{\mathbb H}_n^{\mathrm{EP}},$  then clearly ${\bf A}^{\dag}={\bf A}^{\#}$.
Similarly as for complex matrices, the
core inverses of a square quaternion matrix ${\bf A}\in  {\mathbb{H}}^{n\times n}$ exist if and only if ${\bf A}\in {\mathbb H}_n^{\mathrm{CM}}$ or $\Ind{\bf A}=1$. Moreover, if ${\bf A}$ is
non-singular, $\Ind{\bf A}=0$, then its core inverses are the usual inverse.

Due to \cite{baks}, the following representations of right and left core inverses can be extended to quaternion matrices.
\begin{lem}\label{lem:rep_cor1}Let ${\bf A}\in {\mathbb H}_n^{\mathrm{CM}}$. Then
 \begin{align} {\bf A}^{\tiny\textcircled{\#}}=&{\bf A}^{\#}{\bf A}{\bf A}^{\dag},\label{eq:rep_rcor}\\
{\bf A}_{\tiny\textcircled{\#}}=&{\bf A}^{\dag}{\bf A}{\bf A}^{\#}\label{eq:rep_lcor}
\end{align}
\end{lem}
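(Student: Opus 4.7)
The plan is to check directly that the right-hand sides of the two formulas satisfy the defining conditions of the right and left core inverses, respectively. Since the only standing hypothesis is $\mathbf{A}\in{\mathbb H}_n^{\mathrm{CM}}$, i.e.\ $\mathrm{Ind}\,\mathbf{A}=1$, the group inverse $\mathbf{A}^{\#}$ exists and satisfies the three identities
$$\mathbf{A}\mathbf{A}^{\#}=\mathbf{A}^{\#}\mathbf{A},\qquad \mathbf{A}\mathbf{A}^{\#}\mathbf{A}=\mathbf{A},\qquad \mathbf{A}^{\#}\mathbf{A}\mathbf{A}^{\#}=\mathbf{A}^{\#};$$
these will do all the work, together with the four Moore--Penrose equations (\ref{eq1:MP})--(\ref{eq4:MP}). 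Thus no features special to ${\mathbb{H}}$ enter beyond making sure that the order of factors is respected.

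For (\ref{eq:rep_rcor}), set $\mathbf{X}:=\mathbf{A}^{\#}\mathbf{A}\mathbf{A}^{\dag}$. First I would verify $\mathbf{A}\mathbf{X}=\mathbf{P}_A$ by the one-line computation
$$\mathbf{A}\mathbf{X}=\mathbf{A}\mathbf{A}^{\#}\mathbf{A}\mathbf{A}^{\dag}=(\mathbf{A}\mathbf{A}^{\#}\mathbf{A})\mathbf{A}^{\dag}=\mathbf{A}\mathbf{A}^{\dag}=\mathbf{P}_A.$$
For the second condition $\mathcal{R}_r(\mathbf{X})=\mathcal{R}_r(\mathbf{A})$, the inclusion $\subseteq$ follows since $\mathbf{X}=\mathbf{A}^{\#}(\mathbf{A}\mathbf{A}^{\dag})=\mathbf{A}(\mathbf{A}^{\#})^{2}(\mathbf{A}\mathbf{A}^{\dag})$ using $\mathbf{A}^{\#}=\mathbf{A}^{\#}\mathbf{A}\mathbf{A}^{\#}$ together with $\mathbf{A}\mathbf{A}^{\#}=\mathbf{A}^{\#}\mathbf{A}$, so every column of $\mathbf{X}$ lies in $\mathcal{R}_r(\mathbf{A})$. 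For $\supseteq$ I would combine this with a rank count based on the material at the end of Section~2: from $\mathbf{A}\mathbf{X}=\mathbf{P}_A$ one has $\mathrm{rk}(\mathbf{X})\ge\mathrm{rk}(\mathbf{A}\mathbf{X})=\mathrm{rk}(\mathbf{P}_A)=\mathrm{rk}(\mathbf{A})$, and the already-established containment $\mathcal{R}_r(\mathbf{X})\subseteq\mathcal{R}_r(\mathbf{A})$ bounds the rank the other way, forcing equality of the right column spaces.

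For (\ref{eq:rep_lcor}), set $\mathbf{Y}:=\mathbf{A}^{\dag}\mathbf{A}\mathbf{A}^{\#}$ and run the mirror argument from the other side:
$$\mathbf{Y}\mathbf{A}=\mathbf{A}^{\dag}\mathbf{A}\mathbf{A}^{\#}\mathbf{A}=\mathbf{A}^{\dag}(\mathbf{A}\mathbf{A}^{\#}\mathbf{A})=\mathbf{A}^{\dag}\mathbf{A}=\mathbf{Q}_A.$$
The inclusion $\mathcal{R}_l(\mathbf{Y})\subseteq\mathcal{R}_l(\mathbf{A})$ is obtained by writing $\mathbf{Y}=(\mathbf{A}^{\dag}\mathbf{A})\mathbf{A}^{\#}=(\mathbf{A}^{\dag}\mathbf{A})(\mathbf{A}^{\#})^{2}\mathbf{A}$, placing the rightmost factor $\mathbf{A}$ on the correct side for a left row space; the reverse inclusion again follows from a rank balance, now using $\mathrm{rk}(\mathbf{Y})\ge\mathrm{rk}(\mathbf{Y}\mathbf{A})=\mathrm{rk}(\mathbf{Q}_A)=\mathrm{rk}(\mathbf{A})$ together with the identification $\mathrm{rk}(\mathbf{A}\mathbf{A}^{*})=\mathrm{rk}(\mathbf{A}^{*}\mathbf{A})=\mathrm{rk}(\mathbf{A})$ recorded in Section~2.

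The computations are short; the only place that requires real attention is making sure every manipulation preserves the left/right structure dictated by the noncommutativity of $\mathbb{H}$---in particular, that $\mathcal{R}_r$ is invoked for the right core inverse and $\mathcal{R}_l$ for the dual, and that the group-inverse identity $\mathbf{A}\mathbf{A}^{\#}\mathbf{A}=\mathbf{A}$ is applied exactly where the two other factors in the triple product $\mathbf{A}^{\#}\mathbf{A}\mathbf{A}^{\dag}$ (respectively $\mathbf{A}^{\dag}\mathbf{A}\mathbf{A}^{\#}$) can be combined with an outer $\mathbf{A}$ to collapse to $\mathbf{A}\mathbf{A}^{\dag}$ or $\mathbf{A}^{\dag}\mathbf{A}$. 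That is the whole proof.
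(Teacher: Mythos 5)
Your proof is correct. Note that the paper itself offers no proof of this lemma: it simply asserts that the representations of Baksalary and Trenkler for complex matrices ``can be extended to quaternion matrices'' and moves on. Your direct verification therefore supplies an argument the paper omits, and it is the right one: the identity $\mathbf{A}\mathbf{A}^{\#}\mathbf{A}=\mathbf{A}$ collapses $\mathbf{A}\mathbf{X}$ to $\mathbf{P}_A$ (resp.\ $\mathbf{Y}\mathbf{A}$ to $\mathbf{Q}_A$), the factorizations $\mathbf{X}=\mathbf{A}(\mathbf{A}^{\#})^{2}\mathbf{A}\mathbf{A}^{\dag}$ and $\mathbf{Y}=\mathbf{A}^{\dag}\mathbf{A}(\mathbf{A}^{\#})^{2}\mathbf{A}$ give the inclusions of the right column space, resp.\ left row space, into that of $\mathbf{A}$, and the rank balance $\rk(\mathbf{X})\ge\rk(\mathbf{A}\mathbf{X})=\rk(\mathbf{P}_A)=\rk(\mathbf{A})$ upgrades the inclusion to equality. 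The only point worth making explicit is that the final step compares the dimension of a right $\mathbb{H}$-submodule with $\rk(\mathbf{A})$, which is legitimate precisely because Section~2 records that column rank, row rank and determinantal rank coincide for quaternion matrices; with that in hand your argument is complete and correctly respects the left/right structure throughout.
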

\begin{lem}\label{lem:rep_cor2}Let ${\bf A}\in {\mathbb H}_n^{\mathrm{CM}}$. Then
\begin{enumerate}
  \item[(i)] $ {\bf A}^{\tiny\textcircled{\#}}, {\bf A}_{\tiny\textcircled{\#}}\in{\mathbb H}_n^{\mathrm{EP}},
$

  \item[(ii)] $ \left({\bf A}^{\tiny\textcircled{\#}}\right)^{\dag}={\bf A}{\bf P}_A,~\left({\bf A}_{\tiny\textcircled{\#}}\right)^{\dag}={\bf Q}_A{\bf A},
$

  \item[(iii)] $\left({\bf A}^{\tiny\textcircled{\#}}\right)^{\#}={\bf A}{\bf P}_A,~\left({\bf A}_{\tiny\textcircled{\#}}\right)^{\#}={\bf Q}_A{\bf A},
$

  \item[(iv)]${\bf A}^{\tiny\textcircled{\#}},~{\bf A}_{\tiny\textcircled{\#}}\in{\bf A}\{1,2\}
$

  \item [(v)] $\left({\bf A}^{\tiny\textcircled{\#}}\right)^2{\bf A}={\bf A}^{\#},~{\bf A}\left({\bf A}_{\tiny\textcircled{\#}}\right)^2={\bf A}^{\#},
$

  \item[(vi)] $\left({\bf A}^{\tiny\textcircled{\#}}\right)^m=\left({\bf A}^m\right)^{\tiny\textcircled{\#}},~{\bf A}\left({\bf A}_{\tiny\textcircled{\#}}\right)^m={\bf A}\left({\bf A}^m\right)_{\tiny\textcircled{\#}},
$

  \item[(vii)] $\left({\bf A}^{\tiny\textcircled{\#}}\right){\bf A}={\bf A}^{\#}{\bf A},~{\bf A}\left({\bf A}_{\tiny\textcircled{\#}}\right)={\bf A}{\bf A}^{\#}.
$
\end{enumerate}
\end{lem}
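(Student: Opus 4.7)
The plan is to reduce everything to the factorizations ${\bf A}^{\tiny\textcircled{\#}}={\bf A}^{\#}{\bf A}{\bf A}^{\dag}$ and ${\bf A}_{\tiny\textcircled{\#}}={\bf A}^{\dag}{\bf A}{\bf A}^{\#}$ supplied by Lemma \ref{lem:rep_cor1}, and then push through using the basic identities ${\bf A}{\bf A}^{\dag}{\bf A}={\bf A}$, $({\bf A}{\bf A}^{\dag})^{*}={\bf A}{\bf A}^{\dag}={\bf P}_{A}$, $({\bf A}^{\dag}{\bf A})^{*}={\bf A}^{\dag}{\bf A}={\bf Q}_{A}$, together with the group-inverse identities ${\bf A}{\bf A}^{\#}={\bf A}^{\#}{\bf A}$, ${\bf A}^{\#}{\bf A}^{2}={\bf A}^{2}{\bf A}^{\#}={\bf A}$ and ${\bf A}^{\#}{\bf A}{\bf A}^{\#}={\bf A}^{\#}$. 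The one preparatory observation I will use repeatedly is that the columns and rows of ${\bf A}^{\#}$ lie in $\mathcal{R}_{r}({\bf A})$ and $\mathcal{R}_{l}({\bf A})$ respectively, because ${\bf A}^{\#}={\bf A}({\bf A}^{\#})^{2}=({\bf A}^{\#})^{2}{\bf A}$; since ${\bf P}_{A}$ and ${\bf Q}_{A}$ are the orthogonal projectors onto $\mathcal{R}_{r}({\bf A})$ and $\mathcal{R}_{l}({\bf A})$, this yields the two absorption identities
\[
{\bf A}{\bf A}^{\dag}{\bf A}^{\#}={\bf A}^{\#},\qquad {\bf A}^{\#}{\bf A}^{\dag}{\bf A}={\bf A}^{\#},
\]
which let me swallow extra ${\bf P}_{A}$ or ${\bf Q}_{A}$ blocks when words are multiplied together.

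Items (vii), (iv), (v), (ii), (iii), (i) then follow by successive short computations. For (vii): $({\bf A}^{\tiny\textcircled{\#}}){\bf A}={\bf A}^{\#}({\bf A}{\bf A}^{\dag}{\bf A})={\bf A}^{\#}{\bf A}$. For (iv): ${\bf A}({\bf A}^{\tiny\textcircled{\#}}){\bf A}={\bf P}_{A}{\bf A}={\bf A}$, while by (vii) $({\bf A}^{\tiny\textcircled{\#}}){\bf A}({\bf A}^{\tiny\textcircled{\#}})=({\bf A}^{\#}{\bf A}{\bf A}^{\#}){\bf A}{\bf A}^{\dag}={\bf A}^{\tiny\textcircled{\#}}$. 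For (v): $({\bf A}^{\tiny\textcircled{\#}})^{2}{\bf A}=({\bf A}^{\#}{\bf A}{\bf A}^{\dag})({\bf A}^{\#}{\bf A})={\bf A}^{\#}({\bf A}{\bf A}^{\dag}{\bf A}^{\#}){\bf A}=({\bf A}^{\#})^{2}{\bf A}={\bf A}^{\#}$. For (ii) and (iii) the candidate is ${\bf A}{\bf P}_{A}={\bf A}^{2}{\bf A}^{\dag}$, and the key intermediate equalities $({\bf A}^{\tiny\textcircled{\#}})({\bf A}^{2}{\bf A}^{\dag})={\bf A}^{\#}({\bf A}{\bf A}^{\dag}{\bf A})\,{\bf A}{\bf A}^{\dag}={\bf A}^{\#}{\bf A}^{2}{\bf A}^{\dag}={\bf A}{\bf A}^{\dag}={\bf P}_{A}$ and $({\bf A}^{2}{\bf A}^{\dag})({\bf A}^{\tiny\textcircled{\#}})={\bf A}^{2}{\bf A}^{\#}{\bf A}^{\dag}={\bf A}{\bf A}^{\dag}={\bf P}_{A}$ reduce both the four Penrose equations and the three group-inverse equations to Hermicity and idempotency of ${\bf P}_{A}$, which are already in hand. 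Finally (i) is an immediate consequence: a group-invertible matrix whose Moore--Penrose and group inverses coincide is EP, and (ii) and (iii) give $({\bf A}^{\tiny\textcircled{\#}})^{\dag}=({\bf A}^{\tiny\textcircled{\#}})^{\#}={\bf A}{\bf P}_{A}$. The left-sided analogues are handled by symmetric computations using the second absorption identity.

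The main work is in (vi). For $({\bf A}^{\tiny\textcircled{\#}})^{m}=({\bf A}^{m})^{\tiny\textcircled{\#}}$ I will proceed in two parts. First, by induction on $m$ I establish $({\bf A}^{\tiny\textcircled{\#}})^{m}=({\bf A}^{\#})^{m}{\bf A}{\bf A}^{\dag}$: the inductive step is a single absorption, $({\bf A}^{\tiny\textcircled{\#}})^{m}{\bf A}^{\tiny\textcircled{\#}}=({\bf A}^{\#})^{m}({\bf A}{\bf A}^{\dag}{\bf A}^{\#}){\bf A}{\bf A}^{\dag}=({\bf A}^{\#})^{m+1}{\bf A}{\bf A}^{\dag}$. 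Second, I compute $({\bf A}^{m})^{\tiny\textcircled{\#}}=({\bf A}^{m})^{\#}{\bf A}^{m}({\bf A}^{m})^{\dag}=({\bf A}^{\#})^{m}{\bf P}_{{\bf A}^{m}}$, where $({\bf A}^{m})^{\#}=({\bf A}^{\#})^{m}$ follows from the standard check of the three group-inverse axioms. The decisive observation is that, because $\Ind {\bf A}=1$ forces $\rk {\bf A}^{m}=\rk {\bf A}=r$ for all $m\geq 1$, the descending chain $\mathcal{R}_{r}({\bf A})\supseteq\mathcal{R}_{r}({\bf A}^{2})\supseteq\cdots$ of right $\mathbb{H}$-subspaces is stationary, so $\mathcal{R}_{r}({\bf A}^{m})=\mathcal{R}_{r}({\bf A})$ and hence ${\bf P}_{{\bf A}^{m}}={\bf P}_{A}={\bf A}{\bf A}^{\dag}$; the two expressions therefore coincide. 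The dual statement ${\bf A}({\bf A}_{\tiny\textcircled{\#}})^{m}={\bf A}({\bf A}^{m})_{\tiny\textcircled{\#}}$ is handled symmetrically via $\mathcal{R}_{l}({\bf A}^{m})=\mathcal{R}_{l}({\bf A})$ and ${\bf Q}_{{\bf A}^{m}}={\bf Q}_{A}$, giving $({\bf A}_{\tiny\textcircled{\#}})^{m}={\bf A}^{\dag}{\bf A}({\bf A}^{\#})^{m}$ and then ${\bf A}({\bf A}_{\tiny\textcircled{\#}})^{m}={\bf A}({\bf A}^{\#})^{m}={\bf A}({\bf A}^{m})_{\tiny\textcircled{\#}}$. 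I do not anticipate genuine noncommutative obstacles: every step is a formal word rewrite in $\mathbb{H}^{n\times n}$, and the only quaternion-specific care is to keep ${\bf P}_{A}$ acting on right column spaces and ${\bf Q}_{A}$ on left row spaces consistently throughout.
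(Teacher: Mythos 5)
Your proof is correct, and it supplies something the paper itself does not: the paper states this lemma with no argument at all, merely asserting that the Baksalary--Trenkler properties from \cite{baks} carry over to ${\mathbb{H}}$, so there is no "paper's proof" to compare against. Your route --- reduce everything to the factorizations of Lemma \ref{lem:rep_cor1} and the two absorption identities ${\bf A}{\bf A}^{\dag}{\bf A}^{\#}={\bf A}^{\#}$ and ${\bf A}^{\#}{\bf A}^{\dag}{\bf A}={\bf A}^{\#}$ --- is exactly the kind of order-sensitive word rewriting that a quaternionic extension requires, and every individual computation I checked (in particular ${\bf M}{\bf X}={\bf X}{\bf M}={\bf P}_A$ for ${\bf M}={\bf A}^{\tiny\textcircled{\#}}$, ${\bf X}={\bf A}^{2}{\bf A}^{\dag}$, which settles (ii), (iii) and (i) simultaneously) goes through. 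The only step that leans on something beyond formal identities is ${\bf P}_{{\bf A}^{m}}={\bf P}_{A}$ in (vi), where you invoke equality of right column spaces plus uniqueness of the Hermitian idempotent with a prescribed right range over ${\mathbb{H}}$; both facts are true (the paper's preliminaries give $\rk{\bf A}^{m}=\rk{\bf A}$ and the equality of column rank with the dimension of the right column space, and for Hermitian idempotents ${\bf P},{\bf Q}$ with the same right range one has ${\bf P}={\bf Q}{\bf P}=({\bf P}{\bf Q})^{*}={\bf Q}^{*}={\bf Q}$), but you could bypass the dimension count entirely by noting ${\bf A}={\bf A}^{m}({\bf A}^{\#})^{m-1}$, which gives $\mathcal{R}_{r}({\bf A})\subseteq\mathcal{R}_{r}({\bf A}^{m})$ algebraically and then ${\bf P}_{A}{\bf P}_{{\bf A}^{m}}={\bf P}_{{\bf A}^{m}}$, ${\bf P}_{{\bf A}^{m}}{\bf P}_{A}={\bf P}_{A}$, whence equality after conjugate transposition. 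With that cosmetic strengthening the argument is complete and self-contained.
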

\begin{rem} In Theorems \ref{th:detrep_rcor} and \ref{th:detrep_lcor}, we will suppose that ${\bf A}\in {\mathbb H}_n^{\mathrm{CM}}$ but ${\bf A}\notin {\mathbb H}_n^{\mathrm{EP}}$.
Since ${\bf A}\in {\mathbb H}_n^{\mathrm{CM}}$ and ${\bf A}\in {\mathbb H}_n^{\mathrm{EP}}$ (in particular, ${\bf A}$ is Hermitian), then it follows from Lemma \ref{lem:rep_cor1} and the definitions of the Moore-Penrose inverse and group inverse that ${\bf A}^{\tiny\textcircled{\#}}={\bf A}_{\tiny\textcircled{\#}}={\bf A}^{\#}={\bf A}^{\dag}$.
\end{rem}
\begin{thm}\label{th:detrep_rcor}Let ${\bf A}\in  {\mathbb H}_n^{\mathrm{CM}}$, $\rk{\bf A}^{2}=\rk{\bf A}=s$. Then its right core inverse  ${\bf A}^{\tiny\textcircled{\#}}=\left(a_{ij}^{\tiny\textcircled{\#},r}\right)$ has the following determinantal representation
 \begin{align}\label{eq:detrep_rcor}
&a_{ij}^{\tiny\textcircled{\#},r}=
 {\frac{ \sum\limits_{\alpha \in I_{s,n}{\left\{ j
\right\}}}{{\rm{rdet}}_{j} {\left( {({\bf A} {\bf A}^{ *}
)_{j .} ({\bf \tilde{u}}_{i .} )}
\right)_{\alpha} ^{\alpha} } }}
{{{\sum\limits_{\alpha \in I_{s,n}} {{\left| {\left( { {\bf A}^{ 3} \left({\bf A}^{3} \right)^{*}
} \right)_{\alpha} ^{\alpha}
}  \right|}}}
{\sum\limits_{\alpha \in I_{s,n}} {{ {\left| {
{\bf A} {\bf A}^{ *} } \right|_{\alpha
}^{\alpha} }  }}}
 }}},
\end{align}
 where
${\bf \widetilde{u}}_{i .} $ is the $i$th row of ${\widetilde{\bf U}}:={\bf U}{\bf A}^{ 2}{\bf A}^{*}$, and ${\bf U}=(u_{if})\in {\mathbb H}^{n\times n}$ such that
   \begin{align*}u_{if}={{\sum\limits_{\alpha \in I_{s,n} {\left\{ {f}
\right\}}} {{\rm{rdet}} _{f} \left( {\left( { {\bf A}^{ 3} \left({\bf A}^{ 3} \right)^{*}
} \right)_{. f} ({\check {\bf a}}_{i\,.})} \right) _{\alpha} ^{\alpha} } }
},
\end{align*}
 where
 ${\check {\bf a}}_{i\,.}$ is the $i$th row of $\check{{\rm {\bf A}}}=
{\bf A}({\bf A}^{ 3})^{*}$.
\end{thm}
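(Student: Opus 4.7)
The plan is to derive the formula from the representation ${\bf A}^{\tiny\textcircled{\#}}={\bf A}^{\#}{\bf A}{\bf A}^{\dag}$ supplied by Lemma \ref{lem:rep_cor1}, substituting the determinantal expressions of ${\bf A}^{\#}$ (equation (\ref{eq:rdet_gr})) and ${\bf A}^{\dag}$ (equation (\ref{eq:rdet_repr_AA*})), and then collapsing the intermediate sums by using the left linearity of the row determinant (Lemma \ref{lem:row_combin}). Since $\rk{\bf A}^{2}=\rk{\bf A}=s$ forces $\Ind{\bf A}\le 1$, the group inverse ${\bf A}^{\#}$ exists, so Lemma \ref{lem:rep_cor1} applies.

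First I would rewrite (\ref{eq:rdet_gr}) in the factored form $a_{ij}^{\#}=\det_1^{-1}\sum_{f}u_{if}a_{fj}$, where $\det_1:=\sum_{\alpha\in I_{s,n}}|{\bf A}^{3}({\bf A}^{3})^{*}|_{\alpha}^{\alpha}$ and $u_{if}$ is exactly as defined in the theorem statement. This identifies ${\bf A}^{\#}=\det_1^{-1}{\bf U}{\bf A}$, and hence ${\bf A}^{\#}{\bf A}=\det_1^{-1}{\bf U}{\bf A}^{2}$. Next I would compute
\[
({\bf A}^{\#}{\bf A}{\bf A}^{\dag})_{ij}=\det_1^{-1}\sum_{l}({\bf U}{\bf A}^{2})_{il}\,a_{lj}^{\dag},
\]
and substitute (\ref{eq:rdet_repr_AA*}) for $a_{lj}^{\dag}$, with denominator $\det_2:=\sum_{\alpha\in I_{s,n}}|{\bf A}{\bf A}^{*}|_{\alpha}^{\alpha}$ and numerator $\sum_{\alpha\in I_{s,n}\{j\}}{\rm rdet}_{j}\bigl(({\bf A}{\bf A}^{*})_{j.}({\bf a}_{l.}^{*})\bigr)_{\alpha}^{\alpha}$.

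At this point each scalar $({\bf U}{\bf A}^{2})_{il}\in{\mathbb H}$ multiplies on the left a row determinant whose $j$-th row is ${\bf a}_{l.}^{*}$. By Lemma \ref{lem:row_combin}, this scalar can be pulled inside the row determinant and the sum over $l$ moved onto the replaced row, giving the combined row $\sum_{l}({\bf U}{\bf A}^{2})_{il}{\bf a}_{l.}^{*}$. Entrywise this row has $k$-th coordinate $\sum_{l}({\bf U}{\bf A}^{2})_{il}\overline{a_{kl}}=({\bf U}{\bf A}^{2}{\bf A}^{*})_{ik}$, so it is precisely $\widetilde{\bf u}_{i.}$, the $i$-th row of $\widetilde{\bf U}={\bf U}{\bf A}^{2}{\bf A}^{*}$. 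Assembling numerator and denominators yields (\ref{eq:detrep_rcor}) directly.

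The main technical point — and the only place where one must be careful — is the application of Lemma \ref{lem:row_combin} in Step 3. Because of quaternion non-commutativity, linearity of the row determinant only holds with the scalars acting from the \emph{left} on the chosen row; fortunately the scalars $({\bf U}{\bf A}^{2})_{il}$ naturally appear to the left of ${\bf a}_{l.}^{*}$, and the two denominators $\det_{1},\det_{2}$ are real (being sums of principal minors of Hermitian matrices), so they commute with everything and can be factored out unambiguously. Everything else is routine matrix-product bookkeeping.
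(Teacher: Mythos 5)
Your proposal is correct and follows essentially the same route as the paper: start from ${\bf A}^{\tiny\textcircled{\#}}={\bf A}^{\#}{\bf A}{\bf A}^{\dag}$, substitute the rdet representations, and collapse the intermediate sums via left linearity of the row determinant into the single replaced row ${\bf \tilde{u}}_{i.}$ of ${\bf U}{\bf A}^{2}{\bf A}^{*}$. The only (immaterial) difference is that you group the product as $({\bf A}^{\#}{\bf A}){\bf A}^{\dag}$ and use the representation of ${\bf A}^{\dag}$ directly, whereas the paper groups it as ${\bf A}^{\#}({\bf A}{\bf A}^{\dag})$ and uses the representation of the projector ${\bf P}_A$; both yield the same combined row and the same formula.
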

\begin{proof}
By (\ref{eq:rep_rcor}),
\begin{equation*}
a_{ij}^{\tiny\textcircled{\#},r} =  \sum\limits_{l = 1}^{n} {a}_{il}^{\#} p_{lj}.
 \end{equation*}
Using (\ref{eq:cdet_gr}) for the determinantal representation of ${\bf A}^{\#}$ and (\ref{eq:det_repr_proj_P}) for the determinantal representation of ${\bf P}_A={\bf A}{\bf A}^{\dag}$, we obtain
\begin{align*}
&a_{ij}^{\tiny\textcircled{\#},r}= \sum\limits_{l = 1}^{n}{\frac{\sum\limits_{f = 1}^{n}\left({{\sum\limits_{\alpha \in I_{s,n} {\left\{ {f}
\right\}}} {{\rm{rdet}} _{f} \left( {\left( { {\bf A}^{ 3} \left({\bf A}^{ 3} \right)^{*}
} \right)_{. f} (\check{ {\bf a}}_{i\,.})} \right) _{\alpha} ^{\alpha} } }
}\right){a}_{fl}}{{{\sum\limits_{\alpha \in I_{s,n}} {{\left|  { {\bf A}^{ 3} \left({\bf A}^{3} \right)^{*}
}  \right| _{\alpha} ^{\alpha}
}}} }}}\times\\
&~~~~~~~~~~~~~~~~~{\frac{{{\sum\limits_{\alpha \in I_{s,n} {\left\{ {j}
\right\}}} {{{\rm{rdet}} _{j} {\left( {({\bf A} {\bf A}^{ *}
)_{j .} ({\bf \ddot{a}}  _{l  .} )}
\right)  _{\alpha} ^{\alpha} } }}}
}}{{{\sum\limits_{\alpha \in I_{s,n}} {{ {\left| {
{\bf A} {\bf A}^{ *} } \right| _{\alpha
}^{\alpha} }  }}} }}}=\\
&{\frac{\sum\limits_{f = 1}^{n}{{\sum\limits_{\alpha \in I_{s,n} {\left\{ {f}
\right\}}} {{\rm{rdet}} _{f} \left( {\left( { {\bf A}^{ 3} \left({\bf A}^{ 3} \right)^{*}
} \right)_{. f} ({\check {\bf a}}_{i\,.})} \right) _{\alpha} ^{\alpha} } }
}
{{\sum\limits_{\alpha \in I_{s,n} {\left\{ {j}
\right\}}} {{{\rm{rdet}} _{j} {\left( {({\bf A} {\bf A}^{ *}
)_{j .} ({\bf \tilde{a}}  _{f  .} )}
\right)  _{\alpha} ^{\alpha} } }}}
}}{{{\sum\limits_{\alpha \in I_{s,n}} {{\left| {\left( { {\bf A}^{ 3} \left({\bf A}^{3} \right)^{*}
} \right) _{\alpha} ^{\alpha}
}  \right|}}}
{\sum\limits_{\alpha \in I_{s,n}} {{ {\left| {
{\bf A} {\bf A}^{ *} } \right| _{\alpha
}^{\alpha} }  }}}
 }}},
\end{align*}
where ${\check {\bf a}}_{i\,.}$ is the $i$th row of $\check{{\bf A}}=
{\bf A}({\bf A}^{ 3})^{*}$ and ${\bf \tilde{a}}  _{f .}$ is the $f$th row of ${\tilde{\bf A}}:=
{\bf A}^{ 2}{\bf A}^{*}$.

Denote by
  \begin{align*}u_{if}:={{\sum\limits_{\alpha \in I_{s,n} {\left\{ {f}
\right\}}} {{\rm{rdet}} _{f} \left( {\left( { {\bf A}^{ 3} \left({\bf A}^{ 3} \right)^{*}
} \right)_{. f} ({\check {\bf a}}_{i\,.})} \right) _{\alpha} ^{\alpha} } }
}.
\end{align*}
 Construct the matrix ${\bf U}=(u_{if})\in {\mathbb H}^{n\times n}$ and denote ${\tilde{\bf U}}={\bf U}{\tilde{\bf A}}={\bf U}{\bf A}^{ 2}{\bf A}^{*}$. It follows that
$$\sum\limits_{f}u_{if}
\sum\limits_{\alpha \in I_{s,n} {\left\{ {j}
\right\}}}{{\rm{rdet}} _{j} \left( {\left( { {\bf A}{\bf A}^{*}
} \right)_{j.} ( {\bf \tilde{a}}  _{f .})} \right)_{\alpha}
^{\alpha} }=\sum\limits_{\alpha \in I_{s,n} {\left\{ {j}
\right\}}}{{\rm{rdet}} _{j} \left( {\left( { {\bf A}{\bf A}^{*}
} \right)_{j.} ( {\bf \tilde{u}}  _{i .})} \right)_{\alpha}
^{\alpha} },
$$
where ${\bf \tilde{u}}  _{i .}$ is the $i$th row of ${\tilde{\bf U}}$.
Thus we have (\ref{eq:detrep_rcor}).
\end{proof}
Taking into account (\ref{eq:rep_lcor}), the following theorem on determinantal representations of the left core inverse can be proved similarly.

\begin{thm}\label{th:detrep_lcor}
Let ${\bf A}\in  {\mathbb H}_n^{\mathrm{CM}}$, $\rk{\bf A}^{2}=\rk{\bf A}=s$. Then its left core inverse   ${\bf A}_{\tiny\textcircled{\#}}=\left(a_{ij}^{\tiny\textcircled{\#},l}\right)$ has the following determinantal representation
 \begin{align}\label{eq:detrep_lcor}
&a_{ij}^{\tiny\textcircled{\#},l}= {\frac{\sum\limits_{\beta \in J_{s,n} {\left\{ {i}
\right\}}} {{{\rm{cdet}}_{i} {\left( {({\bf A}^{ *} {\bf A}
)_{.i} ({\bf \tilde{v}}_{.j} )}
\right)_{\beta} ^{\beta} } }}
}{{
{\sum\limits_{\beta \in J_{s,n}} {{ {\left| {
{\bf A}^{ *} {\bf A} } \right|_{\beta
}^{\beta} }  }}}
{\sum\limits_{\beta \in J_{s,n}} {{\left|  {\left({\bf A}^{3} \right)^{*} {\bf A}^{ 3}
}   \right|_{\alpha} ^{\alpha}}}} }}},
\end{align}
 where
${\bf \tilde{v}}_{.j} $ is the $j$th column of ${\tilde{\bf V}}:={\bf A}^{*}{\bf A}^{ 2}{\bf V}$, and ${\bf V}=(v_{fj})\in {\mathbb H}^{n\times n}$ such that
   \begin{align*}v_{fj}=\sum\limits_{\beta \in J_{s,n} {\left\{ {f}
\right\}}}{{\rm{cdet}} _{f} \left( {\left( { \left({\bf A}^{ 3} \right)^{*} {\bf A}^{ 3}
} \right)_{.f} ({\hat {\bf a}}_{.j})} \right)_{\beta} ^{\beta} },
\end{align*}
 where
 ${\widehat {\bf a}}_{.j}$ is the $j$th column of $\widehat{{\bf A}}:=
({\bf A}^{ 3})^{*}{\bf A}$.
\end{thm}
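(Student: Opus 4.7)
The plan is to mirror the proof of Theorem \ref{th:detrep_rcor}, replacing the factorization ${\bf A}^{\tiny\textcircled{\#}} = {\bf A}^{\#}{\bf A}{\bf A}^{\dag}$ by the dual one from Lemma \ref{lem:rep_cor1}, namely ${\bf A}_{\tiny\textcircled{\#}} = {\bf A}^{\dag}{\bf A}{\bf A}^{\#} = {\bf Q}_A{\bf A}^{\#}$, and switching every row-determinant representation to its column-determinant counterpart. Starting from
\[
a_{ij}^{\tiny\textcircled{\#},l} = \sum_{f=1}^{n} q_{if}\, a_{fj}^{\#},
\]
I would substitute the column-determinant formula (\ref{eq:det_repr_proj_Q}) for $q_{if}$ and formula (\ref{eq:cdet_gr}) for $a_{fj}^{\#}$, writing the latter as $a_{fj}^{\#} = \big(\sum_{t} a_{ft}\,v_{tj}\big)/\sum_{\beta\in J_{s,n}}|({\bf A}^{3})^{*}{\bf A}^{3}|_{\beta}^{\beta}$, where $v_{tj}$ is as defined in the statement (with the index $f$ relabelled to $t$). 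This is the exact analogue of the auxiliary matrix ${\bf U}$ used in Theorem \ref{th:detrep_rcor}, but built out of column determinants.

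After substitution, the numerator of $a_{ij}^{\tiny\textcircled{\#},l}$ becomes the double sum
\[
\sum_{f}\sum_{\beta\in J_{s,n}\{i\}} {\rm cdet}_{i}\!\left(({\bf A}^{*}{\bf A})_{.i}({\bf \dot{a}}_{.f})\right)_{\beta}^{\beta} \cdot \Big(\sum_{t} a_{ft}\,v_{tj}\Big),
\]
in which the parenthesized factor is the $(f,j)$-entry of ${\bf A}{\bf V}$. Since this scalar multiplies the column determinant on the right, I can apply Lemma \ref{lem:col_combin} to move $({\bf A}{\bf V})_{fj}$ inside as a right multiplier of the replaced column ${\bf \dot{a}}_{.f}$, and then perform the sum over $f$ inside the determinant. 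The resulting replacement column is $\sum_{f}{\bf \dot{a}}_{.f}({\bf A}{\bf V})_{fj}$, which equals the $j$th column of $({\bf A}^{*}{\bf A})({\bf A}{\bf V}) = {\bf A}^{*}{\bf A}^{2}{\bf V} = \tilde{\bf V}$, i.e.\ ${\bf \tilde{v}}_{.j}$. This collapse, combined with the two determinantal denominators already in place, reproduces exactly formula (\ref{eq:detrep_lcor}).

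The only delicate point is non-commutative bookkeeping: Lemma \ref{lem:col_combin} applies only when the quaternion scalars sit to the right of ${\rm cdet}_{i}(\cdots)$. This is automatic here, because $q_{if}$ (the cdet factor) appears to the left of $a_{fj}^{\#}$ in the product $q_{if}\,a_{fj}^{\#}$, and inside $a_{fj}^{\#}$ the entry $a_{ft}$ multiplies $v_{tj}$ from the left as in (\ref{eq:cdet_gr}). Thus no quaternion factors ever need to be commuted, and apart from this ordering check the argument is a direct dual of Theorem \ref{th:detrep_rcor} with no new ingredients.
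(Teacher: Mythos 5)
Your proposal is correct and is precisely the argument the paper intends: it dualizes the proof of Theorem \ref{th:detrep_rcor} by starting from ${\bf A}_{\tiny\textcircled{\#}}={\bf Q}_A{\bf A}^{\#}$, substituting the column-determinant representations (\ref{eq:det_repr_proj_Q}) and (\ref{eq:cdet_gr}), and collapsing the sums via Lemma \ref{lem:col_combin} with the quaternion scalars $({\bf A}{\bf V})_{fj}$ correctly placed on the right, which yields the replacement column $({\bf A}^{*}{\bf A}^{2}{\bf V})_{.j}={\bf \tilde{v}}_{.j}$. The paper gives no written proof beyond the remark that the result "can be proved similarly," and your write-up supplies exactly that omitted dual argument.
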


The next  corollary gives determinantal representations of the right and left core inverses for complex matrices.
 \begin{cor}
Let ${\bf A}\in {\mathbb C}_n^{\mathrm{CM}}$ and $\rk{\bf A}^2=\rk{\bf A}=s$. Then its right core inverse has the determinantal representations
\begin{align*}
&a_{ij}^{\tiny\textcircled{\#},r}=\frac{{\sum\limits_{\alpha \in I_{s,n} {\left\{ {j}
\right\}}} {\left| {( {\bf A}{\bf A}^{
*} )_{j.} ( {\bf {v}}_{i.})} \right|_{\alpha}
^{\alpha} } }}{\sum\limits_{\alpha \in I_{s,n}}\left| { {\bf A}{\bf A}^{*}  } \right|
_{\alpha} ^{\alpha}\sum\limits_{\beta \in J_{s,n}}\left| {{\bf A}^{2}  }
\right| _{\beta} ^{\beta}}=\frac{{{\sum\limits_{\beta
\in J_{s,n} {\left\{ {i} \right\}}} { \left|
{\left( { {\bf A}^{2}  } \right)_{. \,i}
\left( { {\bf v}_{.j} } \right) }\right|
 _{\beta} ^{\beta} } } }}{\sum\limits_{\alpha \in I_{s,n}}\left| { {\bf A}{\bf A}^{*}  } \right|
_{\alpha} ^{\alpha}\sum\limits_{\beta \in J_{s,n}}\left| {{\bf A}^{2}  }
\right| _{\beta} ^{\beta}},
\end{align*}
where
\begin{align*}
{\bf v}_{i.}=&\left[
\sum\limits_{\beta \in J_{s,n} {\left\{ {i} \right\}}}
{ \left| {\left( {{\bf A}^{
2}  } \right)_{.i} \left( {\bf{\tilde a}}_{.f}\right)}
\right|_{\beta} ^{\beta}} \right]\in {\mathbb{H}}^{1 \times
n},\,\,\,\,f=1,\ldots,n\\
    { {\bf v}_{.j} }=&\left[
{{{\sum\limits_{\alpha \in I_{s,n} {\left\{ {j}
\right\}}} { \left| {( {\bf A}{\bf A}^{
*}  )_{j.} ( {\bf\tilde a}_{l.})} \right|_{\alpha}
^{\alpha} } }}}
\right]\in {\mathbb{H}}^{n \times
1},\,\,\,\,l=1,\ldots,n,
\end{align*}
are the row-vector and the column-vector, respectively;  ${\bf \tilde{a}}  _{ .f}$ and ${\bf\tilde a}_{l.}$ are the $f$th column and  $l$th row of ${\tilde{\bf A}}:=
{\bf A}^{ 2}{\bf A}^{*}$.

And, its left core inverse has the determinantal representations
\begin{align*}
&a_{ij}^{\tiny\textcircled{\#},l}=\frac{{\sum\limits_{\alpha \in I_{s,n} {\left\{ {j}
\right\}}} {\left| {( {\bf A}^{
2} )_{j.} ( {\bf {u}}_{i.})} \right|_{\alpha}
^{\alpha} } }}{\sum\limits_{\alpha \in I_{s,n}}\left| { {\bf A}^{2}  } \right|
_{\alpha} ^{\alpha}\sum\limits_{\beta \in J_{s,n}}\left| {{\bf A}^{*}{\bf A}  }
\right| _{\beta} ^{\beta}}=\frac{{{\sum\limits_{\beta
\in J_{s,n} {\left\{ {i} \right\}}} { \left|
{\left( { {\bf A}^{*}{\bf A}  } \right)_{. \,i}
\left( { {\bf u}_{.j} }  \right|} \right)
 _{\beta} ^{\beta} } } }}{\sum\limits_{\alpha \in I_{s,n}}\left| { {\bf A}^{2}  } \right|
_{\alpha} ^{\alpha}\sum\limits_{\beta \in J_{s,n}}\left| {{\bf A}^{*}{\bf A}  }
\right| _{\beta} ^{\beta}},
\end{align*}
where
\begin{align*}
{\bf u}_{i.}=&\left[
\sum\limits_{\beta \in J_{s,n} {\left\{ {i} \right\}}}
{ \left| {\left( {{\bf A}^{
*}{\bf A}  } \right)_{.i} \left( {\bf{\bar a}}_{.f}\right)}
\right|_{\beta} ^{\beta}} \right]\in {\mathbb{H}}^{1 \times
n},\,\,\,\,f=1,\ldots,n\\
    { {\bf u}_{.j} }=&\left[
{{{\sum\limits_{\alpha \in I_{s,n} {\left\{ {j}
\right\}}} { \left| {( {\bf A}^{
2}  )_{j.} ( {\bf\bar a}_{l.})} \right|_{\alpha}
^{\alpha} } }}}
\right]\in {\mathbb{H}}^{n \times
1},\,\,\,\,l=1,\ldots,n,
\end{align*}
are the row-vector and the column-vector, respectively; ${\bf \bar{a}}  _{ .f}$ and ${\bf\bar a}_{l.}$ are the $f$th column and  $l$th row of ${\bar{\bf A}}:=
{\bf A}^{*}{\bf A}^{ 2}$.

\end{cor}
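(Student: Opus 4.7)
The plan is to specialize the quaternion arguments of Theorems~\ref{th:detrep_rcor} and~\ref{th:detrep_lcor} to the commutative complex case, where all row and column determinants collapse to the ordinary determinant; the same combinatorial scheme then produces two equivalent forms of the representation because either the row-form or the column-form of each factor from Corollaries~\ref{cor:det_repr_MP_c} and~\ref{cor:det_repr_Dr_c} may be inserted into the product.

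First I would start from the identity ${\bf A}^{\tiny\textcircled{\#}}={\bf A}^{\#}{\bf P}_A$ of equation~(\ref{eq:rep_rcor}), which yields the entry-wise formula $a_{ij}^{\tiny\textcircled{\#},r}=\sum_{l=1}^{n}a_{il}^{\#}\,p_{lj}$. Since $\rk{\bf A}^{2}=\rk{\bf A}$ forces $\mathrm{Ind}\,{\bf A}=1$, I would substitute the $k=1$ instance of Corollary~\ref{cor:det_repr_Dr_c} for $a_{il}^{\#}$ together with Corollary~\ref{cor:det_repr_MP_c}(iii) for $p_{lj}$. Fixing the row-indexed form of $p_{lj}$ and swapping the two finite sums, the inner sum $\sum_{l}a_{il}^{\#}\,{\bf\tilde a}_{l.}$ collapses to a single row ${\bf v}_{i.}$ of determinants, because each of its coordinates is exactly a replacement of the $i$th column of ${\bf A}^{2}$ by a column ${\bf\tilde a}_{.f}$ of ${\tilde{\bf A}}={\bf A}^{2}{\bf A}^{*}$; this is precisely the step in which ${\bf U}$ and ${\tilde{\bf U}}$ were constructed in the proof of Theorem~\ref{th:detrep_rcor}, only here it is simpler because the linearity of the ordinary determinant requires no left-ordering of cycles. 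The second displayed representation follows by the dual pairing: invoke the column-indexed form of $a_{il}^{\#}$ and the column-indexed form of $p_{lj}$, so that the $l$-summation is absorbed into a single replacement of a column of ${\bf A}^{2}$ by the column vector ${\bf v}_{.j}$.

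For the left core inverse ${\bf A}_{\tiny\textcircled{\#}}={\bf Q}_A{\bf A}^{\#}$ from~(\ref{eq:rep_lcor}) the derivation is formally dual: write $a_{ij}^{\tiny\textcircled{\#},l}=\sum_{l=1}^{n}q_{il}\,a_{lj}^{\#}$, insert Corollary~\ref{cor:det_repr_MP_c}(ii) for $q_{il}$ and Corollary~\ref{cor:det_repr_Dr_c} (with $k=1$) for $a_{lj}^{\#}$, and collapse the $l$-sum by linearity of the determinant in the column that is being replaced, yielding ${\bf u}_{.j}$; the alternative representation uses instead a replacement of a row and yields ${\bf u}_{i.}$. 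Compared to the right case, the matrix ${\tilde{\bf A}}={\bf A}^{2}{\bf A}^{*}$ is simply replaced by ${\bar{\bf A}}={\bf A}^{*}{\bf A}^{2}$, in line with the asymmetric appearance of ${\bf A}^{*}{\bf A}$ versus ${\bf A}{\bf A}^{*}$ in the two projectors.

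The main obstacle is purely notational: keeping track of which factor's row-form is paired with which factor's column-form to produce each of the two displayed determinantal representations, and ensuring that the auxiliary row or column vector is assembled from minors of the correct ambient matrix (${\bf A}{\bf A}^{*}$ or ${\bf A}^{2}$ for the right inverse, ${\bf A}^{*}{\bf A}$ or ${\bf A}^{2}$ for the left). Once one pair of choices is fixed, the verification is just the linearity of the ordinary determinant in a single row or column, with no genuine computation remaining.
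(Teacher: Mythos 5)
Your proposal is correct and follows essentially the derivation the paper intends: start from ${\bf A}^{\tiny\textcircled{\#}}={\bf A}^{\#}{\bf P}_A$ and ${\bf A}_{\tiny\textcircled{\#}}={\bf Q}_A{\bf A}^{\#}$, insert the complex-specific representations of Corollary \ref{cor:det_repr_Dr_c} (with $k=1$) and Corollary \ref{cor:det_repr_MP_c}, and absorb the intermediate sum by multilinearity of the determinant exactly as in the construction of ${\bf U}$, $\tilde{\bf U}$ in the proof of Theorem \ref{th:detrep_rcor}. You also correctly recognize that one must re-run the argument with the simpler ${\bf A}^{2}$-based group-inverse formula rather than merely restrict the quaternion formula (\ref{eq:detrep_rcor}) to complex entries, which is why the denominators contain $\sum_{\beta}\left|{\bf A}^{2}\right|_{\beta}^{\beta}$ instead of $\sum_{\alpha}\left|{\bf A}^{3}({\bf A}^{3})^{*}\right|_{\alpha}^{\alpha}$.
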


\subsection{Determinantal representations of the core EP inverses}
Similar as in \cite{pras}, we introduce two  core EP inverses.
\begin{defn}
A matrix ${\bf X}\in  {\mathbb{H}}^{n\times n}$ is called \emph{the right core EP inverse} of $ {\bf A}\in  {\mathbb{H}}^{n\times n}$
if it satisfies the conditions
$$
{\bf X}{\bf A}{\bf X}={\bf A},~and~ \mathcal{R}_r({\bf X})=\mathcal{R}_r({\bf X}^*)=\mathcal{R}_r({\bf A}^d).$$
It is denoted $ {\bf A}^{\tiny\textcircled{\dag}}$.
\end{defn}
\begin{defn}\label{def:lepcor}
A matrix ${\bf X}\in  {\mathbb{H}}^{n\times n}$ is called \emph{the left core EP inverse} of ${\bf A}\in  {\mathbb{H}}^{n\times n}$
if it satisfies the conditions
$$
{\bf X}{\bf A}{\bf X}={\bf A},~and~ \mathcal{R}_l({\bf X})=\mathcal{R}_l({\bf X}^*)=\mathcal{R}_l(({\bf A})^d).$$
It is denoted $ {\bf A}_{\tiny\textcircled{\dag}}$.
\end{defn}
\begin{rem}Since $\mathcal{R}_r(({\bf A}^*)^d)=\mathcal{R}_l(({\bf A})^d)$, then the left core inverse $ {\bf A}_{\tiny\textcircled{\dag}}$ of  $ {\bf A}\in{\mathbb{C}}^{n\times n}$ is similar to  the left $*$core inverse introduced in \cite{pras}, and  the dual core EP inverse introduced in \cite{zh_arx}.
\end{rem}
Due to \cite{pras}, we have the following representations the  core EP inverses of  $ {\bf A}\in{\mathbb{H}}^{n\times n}$ ,
 \begin{align*} {\bf A}^{\tiny\textcircled{\dag}}=&{\bf A}^{\{2,3,6a\}}~~\text{and}~~~\mathcal{R}_r({\bf A}^{\tiny\textcircled{\dag}})\subseteq \mathcal{R}_r({\bf A}^k),\\
{\bf A}_{\tiny\textcircled{\dag}}=&{\bf A}^{\{2,4,6b\}}~~\text{and}~~~\mathcal{R}_l({\bf A}_{\tiny\textcircled{\dag}})\subseteq \mathcal{R}_l({\bf A}^k).
\end{align*}
Thanks to \cite{{zh_arx}}, the following representations of the  core EP inverses  will be used for their determinantal representations.
\begin{lem}Let $ {\bf A}\in{\mathbb{H}}^{n\times n}$ and $Ind {\bf A}=k$. Then
 \begin{align} {\bf A}^{\tiny\textcircled{\dag}}=&{\bf A}^{k}({\bf A}^{k+1})^{\dag},\label{eq:rep_rcorep}\\
{\bf A}_{\tiny\textcircled{\dag}}=&({\bf A}^{k+1})^{\dag}{\bf A}^{k}.\label{eq:rep_lcorep}
\end{align}
Moreover, if $\Ind {\bf A}=1$, then we have the following representations of the right and left core inverses
\begin{align} {\bf A}^{\tiny\textcircled{\#}}=&{\bf A}({\bf A}^{2})^{\dag},\label{eq:rep_rcor_sim}\\
{\bf A}_{\tiny\textcircled{\#}}=&({\bf A}^{2})^{\dag}{\bf A}.\label{eq:rep_lcor_sim}
\end{align}
\end{lem}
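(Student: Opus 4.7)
The plan is to verify that ${\bf Y}:={\bf A}^{k}({\bf A}^{k+1})^{\dag}$ satisfies the defining properties of the right core EP inverse, i.e.\ that ${\bf Y}$ is a $\{2,3,6a\}$-inverse of ${\bf A}$ with $\mathcal{R}_{r}({\bf Y})=\mathcal{R}_{r}({\bf Y}^{*})=\mathcal{R}_{r}({\bf A}^{d})$; the left-hand statement follows from a Hermitian-conjugate argument, and the $\Ind\,{\bf A}=1$ specializations are immediate.

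The engine of the verification is the index hypothesis. Since $\Ind\,{\bf A}=k$ gives $\rk({\bf A}^{k+1})=\rk({\bf A}^{k})$, the rank-equivalence theorem from \cite{kyr3} (recalled in the preliminaries) yields the matched one-sided range equalities
$$\mathcal{R}_{r}({\bf A}^{k+1})=\mathcal{R}_{r}({\bf A}^{k})=\mathcal{R}_{r}({\bf A}^{d}),\qquad \mathcal{R}_{l}({\bf A}^{k+1})=\mathcal{R}_{l}({\bf A}^{k}).$$
The second equality gives a factorization ${\bf A}^{k}={\bf C}{\bf A}^{k+1}$ for some ${\bf C}\in\mathbb{H}^{n\times n}$, and this is the single fact that powers every subsequent identity. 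With it, the Moore-Penrose identity ${\bf A}^{k+1}({\bf A}^{k+1})^{\dag}{\bf A}^{k+1}={\bf A}^{k+1}$ yields condition (6a): ${\bf Y}{\bf A}^{k+1}={\bf C}{\bf A}^{k+1}({\bf A}^{k+1})^{\dag}{\bf A}^{k+1}={\bf A}^{k}$. Condition (2) follows from ${\bf Y}{\bf A}{\bf Y}={\bf A}^{k}({\bf A}^{k+1})^{\dag}{\bf A}^{k+1}({\bf A}^{k+1})^{\dag}={\bf Y}$, and condition (3) from ${\bf A}{\bf Y}={\bf A}^{k+1}({\bf A}^{k+1})^{\dag}={\bf P}_{A^{k+1}}$, which is Hermitian by definition.

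For the range conditions, $\mathcal{R}_{r}({\bf Y})\subseteq\mathcal{R}_{r}({\bf A}^{k})$ is visible from the formula ${\bf Y}={\bf A}^{k}(\cdots)$, while the reverse inclusion follows from ${\bf Y}{\bf A}^{k+1}={\bf A}^{k}$ just established. To handle $\mathcal{R}_{r}({\bf Y}^{*})$, I would take conjugate transposes: from $({\bf A}{\bf Y})^{*}={\bf A}{\bf Y}={\bf P}_{A^{k+1}}$ I get ${\bf Y}^{*}{\bf A}^{*}={\bf P}_{A^{k+1}}$, so $\mathcal{R}_{r}({\bf A}^{k})=\mathcal{R}_{r}({\bf P}_{A^{k+1}})\subseteq\mathcal{R}_{r}({\bf Y}^{*})$; conversely, ${\bf Y}^{*}={\bf Y}^{*}{\bf A}^{*}{\bf Y}^{*}$ (the conjugate of condition (2)) gives $\mathcal{R}_{r}({\bf Y}^{*})\subseteq \mathcal{R}_{r}({\bf Y}^{*}{\bf A}^{*})=\mathcal{R}_{r}({\bf A}{\bf Y})=\mathcal{R}_{r}({\bf A}^{k})$. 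This closes the chain.

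For the left formula ${\bf A}_{\tiny\textcircled{\dag}}=({\bf A}^{k+1})^{\dag}{\bf A}^{k}$, I would run the mirror-image argument, using instead the factorization ${\bf A}^{k}={\bf A}^{k+1}{\bf C}'$ coming from $\mathcal{R}_{r}({\bf A}^{k+1})=\mathcal{R}_{r}({\bf A}^{k})$, and verifying the $\{2,4,6b\}$ conditions together with $\mathcal{R}_{l}({\bf X})=\mathcal{R}_{l}({\bf X}^{*})=\mathcal{R}_{l}({\bf A}^{k})$. The group-case formulas \eqref{eq:rep_rcor_sim} and \eqref{eq:rep_lcor_sim} then follow as the special case $k=1$, since for $\Ind\,{\bf A}=1$ the defining conditions of the right (left) core EP inverse collapse onto those of the right (left) core inverse. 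The only delicate step throughout is the passage from rank equality to left/right range equality, which must be done with the noncommutative rank machinery of \cite{kyr3} rather than by naive linear-algebra arguments; everything else is routine Moore-Penrose manipulation.
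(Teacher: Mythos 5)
Your verification is correct, and it is worth noting that the paper itself offers no proof of this lemma at all: it simply imports the representations from the cited reference (\cite{zh_arx}, together with the $\{2,3,6a\}$/$\{2,4,6b\}$ characterization taken from \cite{pras}). So your argument is strictly more than the paper provides, and it is the right argument: the key step is exactly the passage from $\rk({\bf A}^{k+1})=\rk({\bf A}^{k})$ to the two one-sided factorizations ${\bf A}^{k}={\bf C}{\bf A}^{k+1}$ and ${\bf A}^{k}={\bf A}^{k+1}{\bf C}'$, after which conditions (2), (3) (resp.\ (4)) and (6a) (resp.\ (6b)) and the range identities all fall out of the Moore--Penrose equations, and your care about doing the rank-to-range step with the noncommutative rank equalities of \cite{kyr3} is exactly what is needed over ${\mathbb{H}}$. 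Three small points to tighten. First, you are (correctly) verifying the condition ${\bf X}{\bf A}{\bf X}={\bf X}$, whereas the paper's displayed definition literally reads ${\bf X}{\bf A}{\bf X}={\bf A}$; that is a typo in the paper, and your reading is the one consistent with its own $\{2,3,6a\}$ characterization, but you should say so explicitly. Second, to conclude that your ${\bf Y}$ \emph{is} ${\bf A}^{\tiny\textcircled{\dag}}$ rather than merely \emph{a} matrix satisfying the conditions, you need uniqueness of the outer inverse with prescribed right column space and prescribed $\mathcal{R}_r({\bf X}^{*})$; this is standard but is an unstated ingredient. Third, the $k=1$ specialization does not literally ``collapse'' onto Definition 3.1, whose conditions are ${\bf A}{\bf X}={\bf P}_A$ and $\mathcal{R}_r({\bf X})=\mathcal{R}_r({\bf A})$; you need the one extra observation that ${\bf A}^{2}({\bf A}^{2})^{\dag}={\bf A}{\bf A}^{\dag}$ because both are orthogonal projectors onto the same right column space $\mathcal{R}_r({\bf A}^{2})=\mathcal{R}_r({\bf A})$. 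With those remarks added, the proof is complete.
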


\begin{thm}Suppose ${\bf A}\in  {\mathbb{H}}^{n\times n}_s$, $\Ind {\bf A}=k$, and there exist ${\bf A}^{\tiny\textcircled{\dag}}$ and ${\bf A}_{\tiny\textcircled{\dag}}$. Then ${\bf A}^{\tiny\textcircled{\dag}}=\left(a_{ij}^{\tiny\textcircled{\dag},r}\right)$ and ${\bf A}_{\tiny\textcircled{\dag}}=\left(a_{ij}^{\tiny\textcircled{\dag},l}\right)$ have the following determinantal representations, respectively,
 \begin{align}\label{eq:detrep_repcorep}
a_{ij}^{\tiny\textcircled{\dag},r}=&
 \frac{\sum\limits_{\alpha \in I_{s,n} {\left\{ {j}
\right\}}} {{\rm{rdet}} _{j} \left( {\left( { {\bf A}^{ k+1}\left({\bf A}^{k+1} \right)^{*}
} \right)_{j.} ({\hat {\bf a}}_{i\,.})} \right)_{\alpha} ^{\alpha} } }{{{\sum\limits_{\alpha \in I_{s,n}} {{\left|   {\bf A}^{k+1}\left({\bf A}^{k+1}\right)^{*}
  \right|_{\alpha} ^{\alpha}}}}
 }},\\\label{eq:detrep_lepcorep}
 a_{ij}^{\tiny\textcircled{\dag},l}=
& \frac{\sum\limits_{\beta \in J_{s,n} {\left\{ {i}
\right\}}} {{\rm{cdet}} _{i} \left( {\left( \left({\bf A}^{k+1} \right)^{*} {\bf A}^{ k+1}
\right)_{.i} ({\check {\bf a}}_{.j})} \right)_{\beta} ^{\beta} } }{{{\sum\limits_{\beta \in J_{s,n}} {{\left| \left({\bf A}^{k+1}\right)^{*} {\bf A}^{k+1}
  \right|_{\beta} ^{\beta}}}} }},
\end{align}
 where ${\hat {\bf a}}_{i\,.}$ is the $i$th row of $\hat{{\bf A}}=
{\bf A}^{ k}({\bf A}^{ k+1})^{*}$ and ${\check {\bf a}}_{.j}$ is the $j$th column of $\check{{\bf A}}=({\bf A}^{ k+1})^{*}
{\bf A}^{ k}$. 
\end{thm}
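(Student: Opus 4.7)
The plan is to derive the determinantal representation of $\mathbf{A}^{\tiny\textcircled{\dag}}$ from the factorization (\ref{eq:rep_rcorep}) by substituting the determinantal representation (\ref{eq:rdet_repr_AA*}) of the Moore-Penrose inverse of $\mathbf{A}^{k+1}$ and then exploiting the left-linearity (Lemma \ref{lem:row_combin}) of row determinants to collapse the sum into a single row determinant; the statement for $\mathbf{A}_{\tiny\textcircled{\dag}}$ will follow symmetrically from (\ref{eq:rep_lcorep}), (\ref{eq:cdet_repr_A*A}) and the right-linearity (Lemma \ref{lem:col_combin}) of column determinants.

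First, since by (\ref{eq:rep_rcorep}) we have $\mathbf{A}^{\tiny\textcircled{\dag}} = \mathbf{A}^{k}(\mathbf{A}^{k+1})^{\dag}$, the entries are
\[
a_{ij}^{\tiny\textcircled{\dag},r} \;=\; \sum_{l=1}^{n} (\mathbf{A}^{k})_{il}\,\bigl((\mathbf{A}^{k+1})^{\dag}\bigr)_{lj}.
\]
I would then apply Theorem \ref{th:det_rep_mp}, specifically (\ref{eq:rdet_repr_AA*}), with $\mathbf{A}$ replaced by $\mathbf{A}^{k+1}$; the rank used in the index set is $s=\rk \mathbf{A}^{k+1}=\rk \mathbf{A}^{k}$, and the row being substituted into the $j$th row is the $l$th row $(\mathbf{a}^{(k+1)})^{*}_{l.}$ of $(\mathbf{A}^{k+1})^{*}$. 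Inserting this into the sum over $l$ gives a double sum, with the denominator $\sum_{\alpha \in I_{s,n}} \bigl|\mathbf{A}^{k+1}(\mathbf{A}^{k+1})^{*}\bigr|_{\alpha}^{\alpha}$ independent of $l$.

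Next comes the key step: in the numerator the scalar $(\mathbf{A}^{k})_{il}$ multiplies a row determinant from the left, with its dependence on $l$ sitting precisely in the replaced row. By Lemma \ref{lem:row_combin},
\[
\sum_{l=1}^{n} (\mathbf{A}^{k})_{il}\,{\rm rdet}_{j}\!\left( (\mathbf{A}^{k+1}(\mathbf{A}^{k+1})^{*})_{j.}\bigl((\mathbf{a}^{(k+1)})^{*}_{l.}\bigr)\right)_{\alpha}^{\alpha}
= {\rm rdet}_{j}\!\left( (\mathbf{A}^{k+1}(\mathbf{A}^{k+1})^{*})_{j.}\Bigl(\sum_{l} (\mathbf{A}^{k})_{il}(\mathbf{a}^{(k+1)})^{*}_{l.}\Bigr)\right)_{\alpha}^{\alpha}.
\]
I would then recognize $\sum_{l}(\mathbf{A}^{k})_{il}(\mathbf{a}^{(k+1)})^{*}_{l.}$ as the $i$th row of $\mathbf{A}^{k}(\mathbf{A}^{k+1})^{*}=\hat{\mathbf{A}}$, namely $\hat{\mathbf{a}}_{i.}$. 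Summing over $\alpha \in I_{s,n}\{j\}$ and dividing by the denominator yields exactly (\ref{eq:detrep_repcorep}).

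For $\mathbf{A}_{\tiny\textcircled{\dag}}=(\mathbf{A}^{k+1})^{\dag}\mathbf{A}^{k}$ the argument is completely dual: write the $(i,j)$ entry as $\sum_{l}\bigl((\mathbf{A}^{k+1})^{\dag}\bigr)_{il}(\mathbf{A}^{k})_{lj}$, use (\ref{eq:cdet_repr_A*A}) applied to $\mathbf{A}^{k+1}$ so that the replaced $i$th column is the $l$th column of $(\mathbf{A}^{k+1})^{*}$, and then apply Lemma \ref{lem:col_combin} to combine the scalars $(\mathbf{A}^{k})_{lj}$ (acting on the right) with the columns to produce the $j$th column of $(\mathbf{A}^{k+1})^{*}\mathbf{A}^{k}=\check{\mathbf{A}}$, giving (\ref{eq:detrep_lepcorep}). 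The main subtlety to watch is that in the quaternionic setting row determinants are left-linear while column determinants are right-linear, so one must verify that the coefficients $(\mathbf{A}^{k})_{il}$ sit on the correct side of the replaced vector in each case; this is why the representations (\ref{eq:rep_rcorep}) and (\ref{eq:rep_lcorep}) pair naturally with (\ref{eq:rdet_repr_AA*}) and (\ref{eq:cdet_repr_A*A}), respectively, and no further work is needed beyond a careful bookkeeping of the sides of multiplication.
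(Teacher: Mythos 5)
Your proposal is correct and follows essentially the same route as the paper: substitute the representation $\mathbf{A}^{\tiny\textcircled{\dag}}=\mathbf{A}^{k}(\mathbf{A}^{k+1})^{\dag}$ together with (\ref{eq:rdet_repr_AA*}) applied to $\mathbf{A}^{k+1}$, collapse the sum by linearity into a single row determinant with the replaced row being the $i$th row of $\mathbf{A}^{k}(\mathbf{A}^{k+1})^{*}$, and argue dually for the left case. Your explicit appeal to Lemmas \ref{lem:row_combin} and \ref{lem:col_combin}, and your remark on which side the coefficients must sit, only makes explicit what the paper leaves implicit.
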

\begin{proof}Let $\left({\bf A}^{k+1}\right)^{\dag}=\left(a_{ij}^{(k+1)}\right)^{\dag}$ and  ${\bf A}^{k}=\left(a_{ij}^{(k)}\right)$.   By (\ref{eq:rep_rcorep}), $$a_{ij}^{\tiny\textcircled{\dag},r}=\sum\limits_{t=1}^na_{it}^{(k)}\left(a_{tj}^{(k+1)}\right)^{\dag}.$$
Using (\ref{eq:rdet_repr_AA*}) for the determinantal representation $\left({\bf A}^{k+1}\right)^{\dag}$, we obtain
 \begin{equation*}
a_{ij}^{\tiny\textcircled{\dag},r}=\sum\limits_{t=1}^na_{it}^{(k)}
{\frac{{{\sum\limits_{\alpha \in I_{s,n} {\left\{ {j} \right\}}}
{{\rm{rdet}} _{j} \left( {\left( {\bf A}^{k+1} ({\bf A}^{k+1})^{ *}
\right)_{j.} ( {\bf a}_{i.}^{(k+1,*)})} \right)_{\alpha}
^{\alpha} } }}}{{{\sum\limits_{\alpha \in I_{r,m}}  {{
{\left| { {\bf A}^{k+1} \left({\bf A}^{k+1}\right)^{ *} } \right|
_{\alpha} ^{\alpha} } }}} }}},
\end{equation*}
where ${\bf a}_{i.}^{(k+1,*)}$ is the $i$th row of $({\bf A}^{ k+1})^{*}$.
Since $\sum\limits_{t=1}^na_{it}^{(k)}{\bf a}_{\,i.}^{(k+1,*)})^{ }  ={\hat {\bf a}}_{i\,.}$, finally, we have (\ref{eq:detrep_repcorep}).

The determinantal representation (\ref{eq:detrep_lepcorep}) is obtained similarly by using (\ref{eq:cdet_repr_A*A}) for the determinantal representation $\left({\bf A}^{k+1}\right)^{\dag}$ in (\ref{eq:rep_lcorep}).
\end{proof}
Taking into account the representations (\ref{eq:rep_rcor_sim})-(\ref{eq:rep_lcor_sim}), we evidently obtain determinantal representations of the right and left core inverses which have more   simpler expressions than (\ref{eq:detrep_rcor})-(\ref{eq:detrep_lcor}).
\begin{cor}Let ${\bf A}\in  {\mathbb{H}}^{n\times n}_s$, $Ind {\bf A}=1$, and there exist ${\bf A}^{\tiny\textcircled{\#}}$ and ${\bf A}_{\tiny\textcircled{\#}}$. Then ${\bf A}^{\tiny\textcircled{\#}}=\left(a_{ij}^{\tiny\textcircled{\#},r}\right)$ and ${\bf A}_{\tiny\textcircled{\#}}=\left(a_{ij}^{\tiny\textcircled{\#},l}\right)$ have the following determinantal representations, respectively,
\begin{align}\label{eq:detrep_repcorep_sim}
a_{ij}^{\tiny\textcircled{\#},r}=&
 \frac{\sum\limits_{\alpha \in I_{s,n} {\left\{ {j}
\right\}}} {{\rm{rdet}} _{j} \left( {\left( { {\bf A}^{2}\left({\bf A}^{2} \right)^{*}
} \right)_{j.} ({\hat {\bf a}}_{i\,.})} \right)_{\alpha} ^{\alpha} } }{{{\sum\limits_{\alpha \in I_{s,n}} {{\left|   {\bf A}^{2}\left({\bf A}^{2}\right)^{*}
  \right|_{\alpha} ^{\alpha}}}}
 }},\\\label{eq:detrep_lepcorep_sim}
 a_{ij}^{\tiny\textcircled{\#},l}=
& \frac{\sum\limits_{\beta \in J_{s,n} {\left\{ {i}
\right\}}} {{\rm{cdet}} _{i} \left( {\left( \left({\bf A}^{2} \right)^{*} {\bf A}^{2}
\right)_{.i} ({\check {\bf a}}_{.j})} \right)_{\beta} ^{\beta} } }{{{\sum\limits_{\beta \in J_{s,n}} {{\left| \left({\bf A}^{2}\right)^{*} {\bf A}^{2}
  \right|_{\beta} ^{\beta}}}} }},
\end{align}
 where ${\hat {\bf a}}_{i\,.}$ is the $i$th row of $\hat{{\bf A}}=
{\bf A}({\bf A}^{2})^{*}$ and ${\check {\bf a}}_{.j}$ is the $j$th column of $\check{{\bf A}}=({\bf A}^{2})^{*}
{\bf A}$.
\end{cor}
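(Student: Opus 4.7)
The plan is to mirror the proof of the preceding theorem but specialized to the case $\Ind {\bf A}=1$, so that taking $k=1$ in (\ref{eq:rep_rcorep})--(\ref{eq:rep_lcorep}) collapses them to the simpler representations (\ref{eq:rep_rcor_sim})--(\ref{eq:rep_lcor_sim}). The hypothesis $\Ind {\bf A}=1$ forces $\rk {\bf A}^{2}=\rk {\bf A}=s$, so ${\bf A}^{2}\in {\mathbb{H}}^{n\times n}_{s}$ and Theorem~\ref{th:det_rep_mp} can be applied to $({\bf A}^{2})^{\dag}$.

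For ${\bf A}^{\tiny\textcircled{\#}}$, I would start from ${\bf A}^{\tiny\textcircled{\#}}={\bf A}({\bf A}^{2})^{\dag}$ and expand the matrix product entrywise:
\begin{equation*}
a_{ij}^{\tiny\textcircled{\#},r}=\sum_{t=1}^{n}a_{it}\bigl(a_{tj}^{(2)}\bigr)^{\dag},
\end{equation*}
where $(a_{tj}^{(2)})^{\dag}$ denotes the $(t,j)$-entry of $({\bf A}^{2})^{\dag}$. Next, I would substitute the row-version (\ref{eq:rdet_repr_AA*}) of the Moore--Penrose determinantal formula applied to ${\bf A}^{2}$, obtaining each $(a_{tj}^{(2)})^{\dag}$ as a ratio whose numerator is a sum over $\alpha\in I_{s,n}\{j\}$ of row determinants in which the $j$th row of ${\bf A}^{2}({\bf A}^{2})^{*}$ is replaced by the $t$th row of $({\bf A}^{2})^{*}$.

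The essential step, and the place requiring the most care, is applying Lemma~\ref{lem:row_combin} to interchange the summation over $t$ with the row-determinant operation: because the scalars $a_{it}$ sit on the left of the relevant row-vectors, the left-linearity of $\operatorname{rdet}_{j}$ in the replaced row lets me absorb them into the row and collapse $\sum_{t}a_{it}\,{\bf a}_{t.}^{(2,*)}$ into the $i$th row of ${\bf A}({\bf A}^{2})^{*}=\hat{\bf A}$. After this collapse the denominator is already the common quantity $\sum_{\alpha}|{\bf A}^{2}({\bf A}^{2})^{*}|_{\alpha}^{\alpha}$, which is independent of $t$, so no further manipulation is required and (\ref{eq:detrep_repcorep_sim}) follows.

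The dual formula (\ref{eq:detrep_lepcorep_sim}) for ${\bf A}_{\tiny\textcircled{\#}}=({\bf A}^{2})^{\dag}{\bf A}$ is proved symmetrically: I would substitute the column version (\ref{eq:cdet_repr_A*A}) of the Moore--Penrose formula for $({\bf A}^{2})^{\dag}$ and invoke Lemma~\ref{lem:col_combin} (right-linearity) to pull the right-multiplication by ${\bf A}$ inside the replaced column, producing the $j$th column of $({\bf A}^{2})^{*}{\bf A}=\check{\bf A}$. No new ideas beyond the first half are needed; the only subtlety is being careful that the noncommutative scalars attach on the correct side in each case, which is precisely what forces the use of Lemma~\ref{lem:row_combin} for the right core inverse and Lemma~\ref{lem:col_combin} for the left one.
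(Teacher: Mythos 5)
Your proposal is correct and follows essentially the same route as the paper: the paper obtains this corollary by setting $k=1$ in the preceding theorem, whose proof is exactly the argument you give — expand ${\bf A}({\bf A}^{2})^{\dag}$ and $({\bf A}^{2})^{\dag}{\bf A}$ entrywise, substitute the row (resp.\ column) determinantal representation of $({\bf A}^{2})^{\dag}$ from Theorem~\ref{th:det_rep_mp}, and collapse the sum into the replaced row (resp.\ column) via left (resp.\ right) linearity. Your explicit appeal to Lemmas~\ref{lem:row_combin} and~\ref{lem:col_combin}, with attention to which side the quaternion scalars attach, is precisely the step the paper leaves implicit.
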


The following corollary gives determinantal representations of the right and left  core EP inverses and the right and left  core inverses for complex matrices.
\begin{cor}Suppose ${\bf A}\in  {\mathbb{C}}^{n\times n}_s$, $Ind {\bf A}=k$, and there exist
 ${\bf A}^{\tiny\textcircled{\dag}}=\left(a_{ij}^{\tiny\textcircled{\dag},r}\right)$ and ${\bf A}_{\tiny\textcircled{\dag}}=\left(a_{ij}^{\tiny\textcircled{\dag},l}\right)$. Then they have the following determinantal representations, respectively,
 \begin{align*}
a_{ij}^{\tiny\textcircled{\dag},r}=&
 \frac{\sum\limits_{\alpha \in I_{s,n} {\left\{ {j}
\right\}}} { \left| {\left( { {\bf A}^{ k+1}\left({\bf A}^{k+1} \right)^{*}
} \right)_{j.} ({\hat {\bf a}}_{i\,.})} \right|_{\alpha} ^{\alpha} } }{{{\sum\limits_{\alpha \in I_{s,n}} {{\left|   {\bf A}^{k+1}\left({\bf A}^{k+1}\right)^{*}
  \right|_{\alpha} ^{\alpha}}}}
 }},\\
 a_{ij}^{\tiny\textcircled{\dag},l}=
& \frac{\sum\limits_{\beta \in J_{s,n} {\left\{ {i}
\right\}}} { \left| {\left( \left({\bf A}^{k+1} \right)^{*} {\bf A}^{ k+1}
\right)_{.i} ({\check {\bf a}}_{.j})} \right|_{\beta} ^{\beta} } }{{{\sum\limits_{\beta \in J_{s,n}} {{\left| \left({\bf A}^{k+1}\right)^{*} {\bf A}^{k+1}
  \right|_{\beta} ^{\beta}}}} }},
\end{align*}
 where ${\hat {\bf a}}_{i\,.}$ is the $i$th row of $\hat{{\bf A}}=
{\bf A}^{ k}({\bf A}^{ k+1})^{*}$ and ${\check {\bf a}}_{.j}$ is the $j$th column of $\check{{\bf A}}=({\bf A}^{ k+1})^{*}
{\bf A}^{ k}$.

If $Ind {\bf A}=1$, then
 ${\bf A}^{\tiny\textcircled{\#}}=\left(a_{ij}^{\tiny\textcircled{\#},r}\right)$ and ${\bf A}_{\tiny\textcircled{\#}}=\left(a_{ij}^{\tiny\textcircled{\#},l}\right)$ have the following determinantal representations, respectively,
 \begin{align*}
a_{ij}^{\tiny\textcircled{\#},r}=&
 \frac{\sum\limits_{\alpha \in I_{s,n} {\left\{ {j}
\right\}}} { \left| {\left( { {\bf A}^{ 2}\left({\bf A}^{2} \right)^{*}
} \right)_{j.} ({\hat {\bf a}}_{i\,.})} \right|_{\alpha} ^{\alpha} } }{{{\sum\limits_{\alpha \in I_{s,n}} {{\left|   {\bf A}^{2}\left({\bf A}^{2}\right)^{*}
  \right|_{\alpha} ^{\alpha}}}}
 }},\\
 a_{ij}^{\tiny\textcircled{\#},l}=
& \frac{\sum\limits_{\beta \in J_{s,n} {\left\{ {i}
\right\}}} { \left| {\left( \left({\bf A}^{2} \right)^{*} {\bf A}^{2}
\right)_{.i} ({\check {\bf a}}_{.j})} \right|_{\beta} ^{\beta} } }{{{\sum\limits_{\beta \in J_{s,n}} {{\left| \left({\bf A}^{2}\right)^{*} {\bf A}^{2}
  \right|_{\beta} ^{\beta}}}} }},
\end{align*}
 where ${\hat {\bf a}}_{i\,.}$ is the $i$th row of $\hat{\bf A}=
{\bf A}({\bf A}^{ 2})^{*}$ and ${\check {\bf a}}_{.j}$ is the $j$th column of $\check{{\bf A}}=({\bf A}^{2})^{*}
{\bf A}$.

\end{cor}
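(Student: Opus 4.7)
The plan is to reduce the theorem to an application of the already-established determinantal representations of the Moore--Penrose inverse (Theorem \ref{th:det_rep_mp}) combined with the linearity lemmas for row/column determinants (Lemmas \ref{lem:row_combin} and \ref{lem:col_combin}). The key ingredients are the factorizations $ {\bf A}^{\tiny\textcircled{\dag}} = {\bf A}^{k}({\bf A}^{k+1})^{\dag}$ and $ {\bf A}_{\tiny\textcircled{\dag}} = ({\bf A}^{k+1})^{\dag}{\bf A}^{k}$ from (\ref{eq:rep_rcorep})--(\ref{eq:rep_lcorep}), which turn the problem into that of computing one matrix product whose middle factor is an already-representable Moore--Penrose inverse.

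For the right core EP inverse, I would first write the $(i,j)$ entry as the quaternion sum $a_{ij}^{\tiny\textcircled{\dag},r}=\sum_{t=1}^{n} a_{it}^{(k)}\bigl(a_{tj}^{(k+1)}\bigr)^{\dag}$, where $a_{it}^{(k)}$ is the $(i,t)$-entry of $ {\bf A}^{k}$. To each factor $\bigl(a_{tj}^{(k+1)}\bigr)^{\dag}$ I apply the rdet-based formula (\ref{eq:rdet_repr_AA*}) from Theorem \ref{th:det_rep_mp}, taken with the matrix $ {\bf A}^{k+1}$ in place of $ {\bf A}$; this introduces in the numerator the row $ {\bf a}_{t\,.}^{(k+1,*)}$, namely the $t$th row of $({\bf A}^{k+1})^{*}$, placed in the $j$th row of ${\bf A}^{k+1}({\bf A}^{k+1})^{*}$. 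The quaternion scalar $a_{it}^{(k)}$ multiplies this row determinant on the left.

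At this point Lemma \ref{lem:row_combin} (left linearity of $\mathrm{rdet}_{j}$ in the $j$th row) lets me interchange the outer sum over $t$ with $\mathrm{rdet}_{j}$, producing a single row determinant whose $j$th row is the quaternion left linear combination $\sum_{t}a_{it}^{(k)} {\bf a}_{t\,.}^{(k+1,*)}$. But this combination is precisely the $i$th row of the matrix product $ {\bf A}^{k}({\bf A}^{k+1})^{*}=\hat{ {\bf A}}$, i.e.\ ${\hat {\bf a}}_{i\,.}$. The denominator is unaffected by the sum and remains $\sum_{\alpha\in I_{s,n}} |{\bf A}^{k+1}({\bf A}^{k+1})^{*}|_{\alpha}^{\alpha}$, so formula (\ref{eq:detrep_repcorep}) follows. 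Here one must note that $\rk {\bf A}^{k+1}=\rk {\bf A}^{k}=s$ coincides with the rank used in the sum ranges, which is exactly the hypothesis built into the existence of ${\bf A}^{\tiny\textcircled{\dag}}$.

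The left version is dual and I would prove it by the same scheme with the roles of rows/columns swapped. Starting from $a_{ij}^{\tiny\textcircled{\dag},l}=\sum_{t=1}^{n}\bigl(a_{it}^{(k+1)}\bigr)^{\dag}\,a_{tj}^{(k)}$, I now apply the cdet-based formula (\ref{eq:cdet_repr_A*A}) to $({\bf A}^{k+1})^{\dag}$, so that the $t$th column $ {\bf a}_{.\,t}^{(k+1,*)}$ of $({\bf A}^{k+1})^{*}$ appears in the $i$th column of $({\bf A}^{k+1})^{*} {\bf A}^{k+1}$. The scalar $a_{tj}^{(k)}$ multiplies $\mathrm{cdet}_{i}$ on the right, which is exactly the configuration required by Lemma \ref{lem:col_combin} (right linearity of $\mathrm{cdet}_{i}$ in the $i$th column). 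Collapsing the sum over $t$ yields the $j$th column of $({\bf A}^{k+1})^{*} {\bf A}^{k}=\check{ {\bf A}}$, namely ${\check {\bf a}}_{.\,j}$, and gives (\ref{eq:detrep_lepcorep}). The only subtle point requiring care is the side on which the scalar sits: the noncommutativity of $\mathbb{H}$ forces one to use rdet and Lemma \ref{lem:row_combin} on the right side and cdet and Lemma \ref{lem:col_combin} on the left side, since only then are the extracted scalars on the correct side (left of row determinants, right of column determinants) so that linearity applies; this is the main thing to get right, while the rest is a mechanical combination of previously established determinantal identities.
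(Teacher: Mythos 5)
Your proposal is correct and follows essentially the same route as the paper: it starts from the representations ${\bf A}^{\tiny\textcircled{\dag}}={\bf A}^{k}({\bf A}^{k+1})^{\dag}$ and ${\bf A}_{\tiny\textcircled{\dag}}=({\bf A}^{k+1})^{\dag}{\bf A}^{k}$, inserts the rdet/cdet determinantal formulas of Theorem \ref{th:det_rep_mp} applied to ${\bf A}^{k+1}$, and collapses the sum over $t$ by linearity so that the $j$th row (resp.\ $i$th column) becomes the corresponding row of ${\bf A}^{k}({\bf A}^{k+1})^{*}$ (resp.\ column of $({\bf A}^{k+1})^{*}{\bf A}^{k}$), exactly as in the paper's proof of the quaternion theorem from which this corollary is specialized. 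Your explicit appeal to Lemmas \ref{lem:row_combin} and \ref{lem:col_combin}, and your care about which side the scalars sit on, only makes explicit what the paper leaves implicit.
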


\subsection{Determinantal representations of the core DMP and MPD inverses}
 The concept of the DMP  inverse in complex matrices was introduced in \cite{mal1} by S. Malik and N. Thome that can be expended to quaternion matrices as follows.
\begin{defn}Suppose ${\bf A}\in  {\mathbb{H}}^{n\times n}$ and $\Ind {\bf A}=k$.
A matrix ${\bf X}\in  {\mathbb{H}}^{n\times n}$ is called \emph{the DMP inverse} of $ {\bf A}$
if it satisfies the conditions
 \begin{align}\label{eq:def_dmp}
{\bf X}{\bf A}{\bf X}={\bf X},~{\bf X}{\bf A}={\bf A}^d{\bf A},~and~ {\bf A}^k{\bf X}={\bf A}^k{\bf A}^{\dag}. \end{align}
It is denoted $ {\bf A}^{d,{\dag}}$.
\end{defn}It is proven \cite{mal1} that  the  matrix
satisfying system of equations (\ref{eq:def_dmp}) is unique and it has the following representation
\begin{align}\label{eq:rep_dmp}
{\bf A}^{d,{\dag}}={\bf A}^{d}{\bf A}{\bf A}^{\dag}. \end{align}
In accordance with the order of use the Drazin inverse (D) and the Moore-Penrose (MP) inverse, its name is the DMP inverse.
\begin{thm}\label{th:detrep_dmp}Let ${\bf A}\in  {\mathbb{H}}^{n\times n}_s$, $\Ind {\bf A}=k$, and $\rk ({\bf A}^{k})=s_1$. Then its DMP inverse $ {\bf A}^{d,{\dag}}= \left(a_{ij}^{d,{\dag}}\right)$ has the following determinantal representations.
\begin{enumerate}
  \item[(i)]~ If ${\bf A}$ is an arbitrary matrix, then
 \begin{align}\label{eq:detrep_dmp}
&a_{ij}^{d,{\dag}}= {\frac{
{{\sum\limits_{\alpha \in I_{s,n} {\left\{ {j}
\right\}}} {{{\rm{rdet}}_{j} {\left( {({\bf A} {\bf A}^{ *}
)_{j .} ({\bf \tilde{u}}_{i  .} )}
\right)_{\alpha} ^{\alpha} } }}}
}}{{{\sum\limits_{\alpha \in I_{s_1,n}} {{\left|  { {\bf A}^{2k+1} \left({\bf A}^{2k+1} \right)^{*}
}  \right|_{\alpha} ^{\alpha}}}}
{\sum\limits_{\alpha \in I_{s,n}} {{ {\left| {
{\bf A} {\bf A}^{ *} } \right|_{\alpha
}^{\alpha} }  }}}
 }}},
\end{align}
 where
${\bf \widetilde{u}}_{i .} $ is the $i$th row of ${\widetilde{\bf U}}:={\bf U}{\bf A}^{ 2}{\bf A}^{*}$, and ${\bf U}=(u_{if})\in {\mathbb H}^{n\times n}$ such that
   \begin{align*}u_{if}={{\sum\limits_{\alpha \in I_{s_1,n} {\left\{ {f}
\right\}}} {{\rm{rdet}} _{f} \left( {\left( { {\bf A}^{2k+1} \left({\bf A}^{ 2k+1} \right)^{*}
} \right)_{. f} ({\check {\bf a}}_{i\,.})} \right) _{\alpha} ^{\alpha} } }
},
\end{align*}
 where
 ${\check {\bf a}}_{i\,.}$ is the $i$th row of $\check{ {\bf A}}=
{\bf A}({\bf A}^{ 2k+1})^{*}$.
  \item[(ii)] If ${\bf A}$ is Hermitian, then
\begin{align}
 a_{ij}^{d,{\dag}}\label{eq:detrep_dmp_her_rdet}=&\frac{{\sum\limits_{\alpha \in I_{s,n} {\left\{ {j}
\right\}}} {{\rm{rdet}} _{j} \left( {( {\bf A}^{
2} )_{j.} ( {\bf {v}}_{i.})} \right)_{\alpha}
^{\alpha} } }}{\sum\limits_{\beta \in J_{s_1,n}} {{\left|
{ {{\bf A}^{k+1}}
}  \right|_{\beta} ^{\beta}}}
\sum\limits_{\alpha \in I_{s,n}}\left| { {\bf A}^{ 2}  } \right|
_{\alpha} ^{\alpha}}=\\
=&\frac{{{\sum\limits_{\beta
\in J_{s_1,n} {\left\{ {i} \right\}}} {{\rm{cdet}} _{i} \left(
{\left( { {\bf A}^{k+1}  } \right)_{. \,i}
\left( { {\bf u}_{.j} }  \right)} \right)
 _{\beta} ^{\beta} } } }}{\sum\limits_{\beta \in J_{s_1,n}} {{\left|
{ {{\bf A}^{k+1}}
}  \right|_{\beta} ^{\beta}}}\sum\limits_{\beta \in J_{s,n}}\left| {{\bf A}^{2}  }
\right| _{\beta} ^{\beta}}\label{eq:detrep_dmp_her_cdet},
\end{align}
where
\begin{align*}
{\bf v}_{i.}=&\left[
\sum\limits_{\beta \in J_{s_1,n} {\left\{ {i} \right\}}}
{{\rm{cdet}} _{i} \left( {\left( {{\bf A}^{
k+1}  } \right)_{.i} \left( {\bf{ a}}^{(k+2)}_{.f}\right)}
\right)_{\beta} ^{\beta}} \right]\in {\mathbb{H}}^{1 \times
n},\,\,\,\,f=1,\ldots,n\\
    { {\bf u}_{.j} }=&\left[
{{{\sum\limits_{\alpha \in I_{s,n} {\left\{ {j}
\right\}}} {{\rm{rdet}} _{j} \left( {( {\bf A}^{
2}  )_{j.} ( {\bf a}_{l.}^{(k+2)})} \right)_{\alpha}
^{\alpha} } }}}
\right]\in {\mathbb{H}}^{n \times
1},\,\,\,\,l=1,\ldots,n,
\end{align*}
are the row-vector and the column-vector, respectively.
\end{enumerate}
\end{thm}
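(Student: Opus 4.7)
The approach is to follow the blueprint of the proof of Theorem \ref{th:detrep_rcor}, adapted from the index-one case (the group inverse ${\bf A}^{\#}$) to the arbitrary-index Drazin inverse ${\bf A}^{d}$. The starting point in both parts is the factorization (\ref{eq:rep_dmp}), ${\bf A}^{d,\dag} = {\bf A}^{d}\,{\bf P}_{A}$, which gives the entry-wise identity $a_{ij}^{d,\dag} = \sum_{l=1}^{n} a_{il}^{d}\, p_{lj}$. The proof then amounts to substituting the appropriate determinantal representations for the two factors and using the linearity Lemmas \ref{lem:row_combin} and \ref{lem:col_combin} to absorb the inner sums into a single auxiliary row or column vector.

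For part (i), I would substitute (\ref{eq:rdet_draz}) for $a_{il}^{d}$, which produces the numerator $\sum_{s} u_{is}\, a_{sl}^{(k)}$ with $u_{is}$ as defined in the statement, and (\ref{eq:det_repr_proj_P}) for $p_{lj}$. Interchanging the order of summation, one recognises $\sum_{l} a_{sl}^{(k)}\, \ddot{\bf a}_{l.}$ as the $s$th row of ${\bf A}^{k}({\bf A}{\bf A}^{*})$; for $k=1$ this recovers the matrix $\widetilde{\bf A}={\bf A}^{2}{\bf A}^{*}$ that appeared in the proof of Theorem \ref{th:detrep_rcor}. One application of Lemma \ref{lem:row_combin} pushes the sum over $l$ inside the $j$th row determinant of $({\bf A}{\bf A}^{*})_{j.}$; a second application absorbs the sum over $s$ weighted by $u_{is}$ into the single row $\widetilde{\bf u}_{i.}$ of $\widetilde{\bf U}={\bf U}\,{\bf A}^{2}{\bf A}^{*}$ (as in the statement), which delivers (\ref{eq:detrep_dmp}).

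For part (ii), since ${\bf A}^{*}={\bf A}$, the Drazin and Moore-Penrose representations collapse to the Hermitian forms (\ref{eq:dr_rep_cdet})/(\ref{eq:dr_rep_rdet}) and Corollary \ref{cor:det_repr_her_mp}. The rdet-type formula (\ref{eq:detrep_dmp_her_rdet}) is obtained by inserting (\ref{eq:dr_rep_rdet}) for $a_{il}^{d}$ and (\ref{eq:det_rdet_mp_her}) for $p_{lj}$ (with ${\bf A}{\bf A}^{*}={\bf A}^{2}$), and then applying Lemma \ref{lem:row_combin} to collapse the inner sum into the row ${\bf v}_{i.}$, whose entries are built from the columns of ${\bf A}^{k+2}={\bf A}^{k+1}\cdot{\bf A}$. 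The cdet-type formula (\ref{eq:detrep_dmp_her_cdet}) is obtained symmetrically by inserting (\ref{eq:det_cdet_mp_her}) for $p_{lj}$ and (\ref{eq:dr_rep_cdet}) for $a_{il}^{d}$, and applying Lemma \ref{lem:col_combin} to collapse the inner sum into the column ${\bf u}_{.j}$.

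The main obstacle is purely bookkeeping. Because of the non-commutativity of ${\mathbb H}$, one must track which side each scalar weight acts on, decide which factor in ${\bf A}^{d}\cdot{\bf P}_{A}$ supplies the outer row or column determinant, and verify at each step that the collapsed sum is genuinely a row or column of an explicit matrix such as ${\bf A}^{k+1}{\bf A}^{*}$ or ${\bf A}^{k+2}$. In the Hermitian case the coexistence of rdet and cdet within a single formula requires choosing the correct linearity lemma at each stage. Once these identifications are in place, each of the three displayed formulas follows from one or two direct invocations of Lemma \ref{lem:row_combin} or Lemma \ref{lem:col_combin}.
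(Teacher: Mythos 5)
Your part (i) is essentially the paper's own argument: expand $a_{ij}^{d,\dag}=\sum_l a_{il}^{d}p_{lj}$ via (\ref{eq:rep_dmp}), substitute (\ref{eq:rdet_draz}) and (\ref{eq:det_repr_proj_P}), and collapse the two inner sums with Lemma \ref{lem:row_combin} into the rows of ${\bf U}$ and then of $\widetilde{\bf U}$. (You correctly observe along the way that the collapsed matrix is ${\bf A}^{k}({\bf A}{\bf A}^{*})={\bf A}^{k+1}{\bf A}^{*}$ in general, which only coincides with the ${\bf A}^{2}{\bf A}^{*}$ of the statement when $k=1$; the paper's proof has the same tension, so this is not a defect of your argument.)

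Part (ii), however, assigns the representations the wrong way around, and as written the key collapse step fails. For (\ref{eq:detrep_dmp_her_rdet}) you propose inserting (\ref{eq:dr_rep_rdet}) for $a_{il}^{d}$; but in that formula the summation index $l$ occurs as the \emph{position} of the row determinant, ${\rm rdet}_{l}\bigl(({\bf A}^{k+1})_{l.}(\cdot)\bigr)$, not as the index of a replaced row or column, so Lemma \ref{lem:row_combin} cannot absorb $\sum_l(\cdot)\,p_{lj}$, and the resulting entries could not be the ${\rm cdet}_i$ expressions that define ${\bf v}_{i.}$ in the statement. Symmetrically, for (\ref{eq:detrep_dmp_her_cdet}) you propose (\ref{eq:det_cdet_mp_her}) for the Moore--Penrose factor, which again puts the inner summation index in the determinant position and cannot produce the ${\rm rdet}_j$ entries of ${\bf u}_{.j}$. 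The paper instead makes a \emph{single} substitution for both Hermitian formulas: the cdet form (\ref{eq:dr_rep_cdet}) for ${\bf A}^{d}$ (so the inner index sits in a replaced column at the fixed position $i$) together with the rdet form (\ref{eq:det_rdet_mp_her}) for ${\bf A}^{\dag}$ (so the other inner index sits in a replaced row at the fixed position $j$); it then rewrites the replaced columns and rows as unit vectors, pushing the powers of ${\bf A}$ into the middle coefficient matrix ${\bf A}^{k+2}$, and obtains (\ref{eq:detrep_dmp_her_rdet}) and (\ref{eq:detrep_dmp_her_cdet}) from this one expression by collapsing, respectively, the cdet factors into the row ${\bf v}_{i.}$ (via Lemma \ref{lem:row_combin}) or the rdet factors into the column ${\bf u}_{.j}$ (via Lemma \ref{lem:col_combin}). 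You flagged exactly this bookkeeping as the main obstacle, but the choice you made does not clear it; with the substitution corrected as above, the rest of your outline goes through.
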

\begin{proof}
By (\ref{eq:rep_dmp}),
\begin{equation*}
a_{ij}^{d,{\dag}} =  \sum\limits_{l = 1}^{n} {a}_{il}^{d} p_{lj}.
 \end{equation*}

(i) Let ${\bf A}\in  {\mathbb{H}}^{n\times n}_s$ be an arbitrary matrix. Then,
using the determinantal representations (\ref{eq:rdet_draz})  and (\ref{eq:det_repr_proj_P})  for respectively ${\bf A}^{d}$ and ${\bf P}_A={\bf A}{\bf A}^{\dag}$, we obtain
\begin{align*}
&a_{ij}^{d,{\dag}}= \sum\limits_{l = 1}^{n}{\frac{\sum\limits_{f = 1}^{n}{{\sum\limits_{\alpha \in I_{s_1,n} {\left\{ {f}
\right\}}} {{\rm{rdet}} _{f} \left( {\left( { {\bf A}^{ 2k+1} \left({\bf A}^{ 2k+1} \right)^{*}
} \right)_{. f} (\check{ {\bf a}}_{i\,.})} \right) _{\alpha} ^{\alpha} } }
}~{a}_{fl}}{{{\sum\limits_{\alpha \in I_{s_1,n}} {{\left|  { {\bf A}^{ 2k+1} \left({\bf A}^{2k+1} \right)^{*}
}  \right| _{\alpha} ^{\alpha}
}}} }}}\times\\
&~~~~~~~~~~~~~~~~~{\frac{{{\sum\limits_{\alpha \in I_{s,n} {\left\{ {j}
\right\}}} {{{\rm{rdet}} _{j} {\left( {({\bf A} {\bf A}^{ *}
)_{j .} ({\bf \ddot{a}}  _{l  .} )}
\right)  _{\alpha} ^{\alpha} } }}}
}}{{{\sum\limits_{\alpha \in I_{s,n}} {{ {\left| {
{\bf A} {\bf A}^{ *} } \right| _{\alpha
}^{\alpha} }  }}} }}}=\\
&{\frac{\sum\limits_{f = 1}^{n}{{\sum\limits_{\alpha \in I_{s_1,n} {\left\{ {f}
\right\}}} {{\rm{rdet}} _{f} \left( {\left( { {\bf A}^{ 2k+1} \left({\bf A}^{ 2k+1} \right)^{*}
} \right)_{. f} ({\check {\bf a}}_{i\,.})} \right) _{\alpha} ^{\alpha} } }
}~
{{\sum\limits_{\alpha \in I_{s,n} {\left\{ {j}
\right\}}} {{{\rm{rdet}} _{j} {\left( {({\bf A} {\bf A}^{ *}
)_{j .} ({\bf \tilde{a}}  _{f  .} )}
\right)  _{\alpha} ^{\alpha} } }}}
}}{{{\sum\limits_{\alpha \in I_{s_1,n}} {{\left| { { {\bf A}^{2k+1} \left({\bf A}^{2k+1} \right)^{*}
}
}  \right|_{\alpha} ^{\alpha}}}}
{\sum\limits_{\alpha \in I_{s,n}} {{ {\left| {
{\bf A} {\bf A}^{ *} } \right| _{\alpha
}^{\alpha} }  }}}
 }}},
\end{align*}
where ${\check {\bf a}}_{i\,.}$ is the $i$th row of $\check{ {\bf A}}=
{\bf A}({\bf A}^{ 2k+1})^{*}$ and ${\bf \widetilde{a}}  _{f .}$ is the $f$th row of ${\widetilde{\bf A}}:=
{\bf A}^{ 2}{\bf A}^{*}$.

Denote by
  \begin{align*}u_{if}:={{\sum\limits_{\alpha \in I_{s_1,n} {\left\{ {f}
\right\}}} {{\rm{rdet}} _{f} \left( {\left( { {\bf A}^{2k+1} \left({\bf A}^{2k+1} \right)^{*}
} \right)_{. f} ({\check {\bf a}}_{i\,.})} \right) _{\alpha} ^{\alpha} } }
}
\end{align*}
for all $i,f=1,\ldots,n$.
Now, we construct the matrix
 ${\bf U}=(u_{if})\in {\mathbb H}^{n\times n}$, and denote  ${\tilde{\bf U}}:={\bf U}{\tilde{\bf A}}={\bf U}{\bf A}^{ 2}{\bf A}^{*}$. It follows that
$$\sum\limits_{f}u_{if}\sum\limits_{\alpha \in I_{s,n} {\left\{ {j}
\right\}}}
{{\rm{rdet}} _{j} \left( {\left( { {\bf A}{\bf A}^{*}
} \right)_{j.} ( {\bf \widetilde{a}}  _{f .})} \right)_{\alpha}
^{\alpha} }=\sum\limits_{\alpha \in I_{s,n} {\left\{ {j}
\right\}}}{{\rm{rdet}} _{j} \left( {\left( { {\bf A}{\bf A}^{*}
} \right)_{j.} ( {\bf \widetilde{u}}  _{i .})} \right)_{\alpha}
^{\alpha} },
$$
where ${\bf \widetilde{u}}  _{i .}$ is the $i$th row of ${\tilde{\bf U}}$.
Thus we have (\ref{eq:detrep_dmp}).

(ii) Let, now,  ${\bf A}\in  {\mathbb{H}}^{n\times n}_s$  be Hermitian. Then using (\ref{eq:dr_rep_cdet} ) for the determinantal representation of ${\bf A}^{d}$ and (\ref{eq:det_rdet_mp_her}) for the determinantal representation of ${\bf A}^{\dag}$, we obtain
\begin{align*}
&a_{ij}^{d,{\dag}}=\nonumber\\& \sum\limits_{l = 1}^{n} \sum\limits_{f = 1}^{n}{\frac{{{\sum\limits_{\beta
\in J_{s_1,n} {\left\{ {i} \right\}}} {{\rm{cdet}} _{i} \left(
{\left( {{\bf A}^{k+1}} \right)_{. \,i} \left( {{\bf
a}_{.l}^{(k)}}  \right)} \right) _{\beta}
^{\beta} } } }}{{{\sum\limits_{\beta \in J_{s_1,n}} {{\left|
{ {{\bf A}^{k+1}}
}  \right|_{\beta} ^{\beta}}}} }}}
a_{lf}
{\frac{{{\sum\limits_{\alpha \in I_{s,n} {\left\{ {j} \right\}}}
{{\rm{rdet}} _{j} \left( {( {\bf A}^{ 2}
)_{j.} ( {\bf a}_{f.} )} \right)_{\alpha}
^{\alpha} } }}}{{{\sum\limits_{\alpha \in I_{s,n}}  {{
{\left| {{\bf A}^{2} } \right|
_{\alpha} ^{\alpha} } }}} }}}=\\
& \sum\limits_{l = 1}^{n} \sum\limits_{f = 1}^{n}{\frac{{{\sum\limits_{\beta
\in J_{s_1,n} {\left\{ {i} \right\}}} {{\rm{cdet}} _{i} \left(
{\left( {{\bf A}^{k+1}} \right)_{. \,i} \left( {{\bf
e}_{.l}}  \right)} \right) _{\beta}
^{\beta} } } }}{{{\sum\limits_{\beta \in J_{s_1,n}} {{\left|
{ {{\bf A}^{k+1}}
}  \right|_{\beta} ^{\beta}}}} }}}
a_{lf}^{(k+2)}
{\frac{{{\sum\limits_{\alpha \in I_{s,n} {\left\{ {j} \right\}}}
{{\rm{rdet}} _{j} \left( {( {\bf A}^{ 2}
)_{j.} ( {\bf e}_{f.} )} \right)_{\alpha}
^{\alpha} } }}}{{{\sum\limits_{\alpha \in I_{s,n}}  {{
{\left| {{\bf A}^{2} } \right|
_{\alpha} ^{\alpha} } }}} }}},
\end{align*}
where $ {\bf e}_{.l}$ and $ {\bf e}_{l.}$  are
the unit column-vector and  the unit row-vector, respectively, such that all their
components are $0$, except the $l$th components  which are $1$; $a_{lf}^{(k+2)}$ is the $(lf)$th element of the matrix ${\bf A}^{k+2}$.

If we denote by

\begin{align*}v_{if}:=&\sum\limits_{l = 1}^{n}{\sum\limits_{\beta
\in J_{s_1,n} {\left\{ {i} \right\}}} {{\rm{cdet}} _{i} \left(
{\left(  {\bf A}^{k+1}   \right)_{. i}
\left( { {\bf e}}_{.l}   \right)} \right)
 _{\beta} ^{\beta} } }{a}^{(k+2)}_{lf}=\\&{\sum\limits_{\beta
\in J_{s_1,n} {\left\{ {i} \right\}}} {{\rm{cdet}} _{i} \left(
{\left( { {\bf A}^{k+1} } \right)}_{.i}
\left(  {\bf {{a}}}_{.f}^{(k+2)}   \right) \right)
 _{\beta} ^{\beta} } }
\end{align*}
 the $f$th component of a row-vector ${\bf v}_{i.}=\left[v_{i1},\ldots,v_{in}\right]$, then
\begin{gather*}
\sum\limits_{f = 1}^{n}v_{if}{\sum\limits_{\alpha \in I_{s,n} {\left\{ {j}
\right\}}} {{\rm{rdet}} _{j} \left( {( {\bf A}^{
2} )_{j.} ( {\bf {e}}_{f.})} \right)_{\alpha}
^{\alpha} } }={\sum\limits_{\alpha \in I_{s,n} {\left\{ {j}
\right\}}} {{\rm{rdet}} _{j} \left( {( {\bf A}^{
2} )_{j.} ( {\bf {v}}_{i.})} \right)_{\alpha}
^{\alpha} } }.
\end{gather*}
So, we have (\ref{eq:detrep_dmp_her_rdet}).

If we denote by
  \begin{align*}u_{lj}:=&\sum\limits_{f = 1}^{n}{a}^{(k+2)}_{lf}
{{{\sum\limits_{\alpha \in I_{s,n} {\left\{ {j}
\right\}}} {{\rm{rdet}} _{j} \left( {( {\bf A}^{
2}  )_{j.} ( {\bf e}_{f.})} \right)_{\alpha}
^{\alpha} } }}}=\\&{{{\sum\limits_{\alpha \in I_{s,n} {\left\{ {j}
\right\}}} {{\rm{rdet}} _{j} \left( {( {\bf A}^{
2} )_{j.} ( {\bf a}_{l.}^{(k+2)})} \right)_{\alpha}
^{\alpha} } }}}
\end{align*}
 the $l$th component of a column-vector ${\bf u}_{.j}=\left[u_{1j},\ldots,u_{nj}\right]$, then
$$\sum\limits_{l = 1}^{n}
{{\sum\limits_{\beta
\in J_{s_1,n} {\left\{ {i} \right\}}} {{\rm{cdet}} _{i} \left(
{\left( { {\bf A}^{ k+1} } \right)_{. \,i}
\left( { {\bf e}_{.l} }  \right)} \right)
 _{\beta} ^{\beta} } } }
\,u_{lj}={{\sum\limits_{\beta
\in J_{s_1,n} {\left\{ {i} \right\}}} {{\rm{cdet}} _{i} \left(
{\left( { {\bf A}^{k+1}  } \right)_{. \,i}
\left( { {\bf u}_{.j} }  \right)} \right)
 _{\beta} ^{\beta} } } }.
$$
So, we get (\ref{eq:detrep_dmp_her_cdet}).
\end{proof}

In that connection, it would be logical to consider the following definition.
\begin{defn}\label{eq:def_mpd}
Suppose ${\bf A}\in  {\mathbb{H}}^{n\times n}$ and $\Ind {\bf A}=k$.
A matrix ${\bf X}\in  {\mathbb{H}}^{n\times n}$ is called \emph{the MPD inverse} of $ {\bf A}$
if it satisfies the conditions
 \begin{align*}
{\bf X}{\bf A}{\bf X}={\bf X},~{\bf A}{\bf X}={\bf A}{\bf A}^d,~and~ {\bf X}{\bf A}^k={\bf A}^{\dag}{\bf A}^k. \end{align*}
It is denoted $ {\bf A}^{{\dag},d}$.
\end{defn}
The  matrix $ {\bf A}^{{\dag},d}$ is unique, and it can be represented as
\begin{align}\label{eq:rep_mpd}
{\bf A}^{{\dag},d}={\bf A}^{\dag}{\bf A}{\bf A}^{d}. \end{align}
\begin{thm}\label{th:detrep_mpd}Let ${\bf A}\in  {\mathbb{H}}_s^{n\times n}$, $Ind {\bf A}=k$ and $\rk {\bf A}^k=s_1$. Then its MPD inverse $ {\bf A}^{{\dag},d}= \left(a_{ij}^{{\dag},d}\right)$ have the following determinantal representations.

(i)~ If ${\bf A}$ is an arbitrary matrix, then
 \begin{align*}
&a_{ij}^{{\dag},d}={\frac{{{\sum\limits_{\beta \in J_{s,n} {\left\{ {i}
\right\}}} {{{\rm{cdet}}_{i} {\left( {({\bf A}^{ *} {\bf A}
)_{.i} ({\bf \widetilde{v}}_{.j} )}
\right)_{\beta} ^{\beta} } }}}}
}{{{\sum\limits_{\beta \in J_{s,n}} {{ {\left| {
{\bf A}^{ *} {\bf A} } \right|_{\beta
}^{\beta} }  }}}
{\sum\limits_{\beta \in J_{s_1,n}} {{\left| { {\left({\bf A}^{2k+1} \right)^{*} {\bf A}^{2k+1}
}
}  \right|_{\beta} ^{\beta}}}}
}}},
\end{align*}
 where
${\bf \widetilde{v}}_{.j} $ is the $j$th column of ${\widetilde{\bf V}}:={\bf A}^{*}{\bf A}^{ 2}{\bf V}$, and ${\bf V}=(v_{fj})\in {\mathbb H}^{n\times n}$ such that
   \begin{align*}v_{fj}=\sum\limits_{\beta \in J_{s_1,n}\{f\}}{{\rm{cdet}} _{f} \left( {\left( { \left({\bf A}^{2k+1} \right)^{*} {\bf A}^{2k+1}
} \right)_{.f} ({\hat {\bf a}}_{.j})} \right)_{\beta} ^{\beta} },
\end{align*}
 where
 ${\widehat {\bf a}}_{.j}$ is the $j$th column of $\widehat{{\bf A}}=
({\bf A}^{ 2k+1})^{*}{\bf A}$.

(ii)~If ${\bf A}$ is Hermitian, then
\begin{align*}
a_{ij}^{{\dag},d}=&\frac{{\sum\limits_{\beta \in J_{s,n} {\left\{ {i}
\right\}}} {{\rm{cdet}} _{i} \left( {( {\bf A}^{
2} )_{.i} ( {\bf {v}}_{.j})} \right)_{\beta}
^\beta } }}{
\sum\limits_{\beta \in J_{s,n}}\left| { {\bf A}^{ 2}  } \right|
_{\beta} ^{\beta}
\sum\limits_{\beta \in I_{s_1,n}} {{\left|
{ {{\bf A}^{k+1}}
}  \right|_{\beta} ^{\beta}}}
}=\frac{{{\sum\limits_{\alpha
\in I_{s_1,n} {\left\{ {j} \right\}}} {{\rm{rdet}} _{j} \left(
{\left( { {\bf A}^{k+1}  } \right)_{j.}
\left( { {\bf u}_{i.} }  \right)} \right)
 _{\alpha} ^{\alpha} } } }}{\sum\limits_{\alpha \in I_{s,\,n}} {{\left|
{ {{\bf A}^{k+1}}
}  \right|_{\alpha} ^{\alpha}}}\sum\limits_{\alpha \in I_{s_1,n}}\left| {{\bf A}^{2}  }
\right| _{\alpha} ^{\alpha}},
\end{align*}

where
\begin{align*}
{\bf v}_{.j}=&\left[
\sum\limits_{\alpha\in I_{s_1,n} {\left\{ {j} \right\}}}
{{\rm{rdet}} _{j} \left( {\left( {{\bf A}^{
k+1}  } \right)_{j.} \left( {\bf{ a}}^{(k+2)}_{l.}\right)}
\right)_{\alpha} ^{\alpha}} \right]\in {\mathbb{H}}^{n \times
1},\,\,\,\,l=1,\ldots,n\\
    { {\bf u}_{i.} }=&\left[
{{{\sum\limits_{\beta \in J_{s,n} {\left\{ {i}
\right\}}} {{\rm{cdet}} _{i} \left( {( {\bf A}^{
2}  )_{.i} ( {\bf a}_{.f}^{(k+2)})} \right)_{\beta}
^{\beta} } }}}
\right]\in {\mathbb{H}}^{1 \times
n},\,\,\,\,l=1,\ldots,n.
\end{align*}
\end{thm}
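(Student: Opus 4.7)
The plan is to mirror the proof of Theorem~\ref{th:detrep_dmp}, interchanging the roles of the two projectors $\mathbf{Q}_A$ and $\mathbf{P}_A$ and of cdet versus rdet throughout. From the representation (\ref{eq:rep_mpd}), write entry-wise
\[
a_{ij}^{\dag,d}=\sum_{l=1}^{n}q_{il}\,a_{lj}^{d},
\]
where $q_{il}=(\mathbf{A}^{\dag}\mathbf{A})_{il}$ and $a_{lj}^{d}=(\mathbf{A}^{d})_{lj}$. This replaces the step $a_{ij}^{d,\dag}=\sum_{l}a_{il}^{d}p_{lj}$ used in the proof of Theorem~\ref{th:detrep_dmp}.

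For part (i), substitute the column determinantal representation (\ref{eq:det_repr_proj_Q}) of $\mathbf{Q}_A$ and the column determinantal representation (\ref{eq:cdet_draz}) of $\mathbf{A}^{d}$. The resulting numerator is a double sum over the contraction index $l$ and a free index $f$ arising from the Drazin formula. Interchange the order of summation so that the scalars $v_{fj}$ (exactly as defined in the theorem) sit outside, and apply the column linearity of the column determinant (Lemma~\ref{lem:col_combin}) to the inner sum $\sum_{l}a_{lf}^{(k)}\,\dot{\mathbf{a}}_{.l}$; it collapses into a single cdet whose replacement column equals the $f$th column of $\mathbf{A}^{*}\mathbf{A}\cdot\mathbf{A}^{k}=\mathbf{A}^{*}\mathbf{A}^{k+1}$. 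A second application of Lemma~\ref{lem:col_combin}, on the outer sum over $f$ weighted by $v_{fj}$, repacks the entries into a single cdet whose replacement column is the $j$th column of the matrix $\widetilde{\mathbf{V}}=\mathbf{A}^{*}\mathbf{A}^{2}\mathbf{V}$, producing the claimed formula.

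Part (ii) follows the same template with the Hermitian-case representations: (\ref{eq:dr_rep_cdet}) for $\mathbf{A}^{d}$ together with (\ref{eq:det_rdet_mp_her}) and (\ref{eq:det_cdet_mp_her}) for $\mathbf{A}^{\dag}$. As in the Hermitian part of the proof of Theorem~\ref{th:detrep_dmp}, replace the column and row arguments of the two determinantal expressions by the unit vectors $\mathbf{e}_{.l}$ and $\mathbf{e}_{f.}$, carry the intermediate matrix product $\mathbf{A}^{\dag}\mathbf{A}\mathbf{A}^{d}$ through as the scalar entry $a_{lf}^{(k+2)}$ of $\mathbf{A}^{k+2}$ (the exponent $k+2$ records the two Hermitian factors of $\mathbf{A}$ together with the $\mathbf{A}^{k}$ inside the Drazin formula), and repack via Lemmas~\ref{lem:col_combin} and \ref{lem:row_combin} into the $f$th column or $l$th row of $\mathbf{A}^{k+2}$; this delivers the auxiliary row and column vectors $\mathbf{u}_{i.}$ and $\mathbf{v}_{.j}$ in the two equivalent forms.

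The main obstacle is purely bookkeeping: tracking which matrix absorbs which factor when the two successive applications of column linearity (in part (i)) or of the alternating column/row linearity (in part (ii)) are carried out, and in particular recognizing $\widetilde{\mathbf{V}}=\mathbf{A}^{*}\mathbf{A}^{2}\mathbf{V}$ as the correct repacking of the twice-combined cdet. Following the template of Theorem~\ref{th:detrep_dmp} line by line, and swapping cdet $\leftrightarrow$ rdet and $\mathbf{A}^{*}\mathbf{A}\leftrightarrow\mathbf{A}\mathbf{A}^{*}$ at each step, keeps the computation controlled.
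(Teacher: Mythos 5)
Your proposal takes essentially the same route as the paper: the paper's own proof of this theorem is only a two-line sketch that starts from $a_{ij}^{\dag,d}=\sum_{l}q_{il}a_{lj}^{d}$ via (\ref{eq:rep_mpd}) and then says to substitute (\ref{eq:det_repr_proj_Q}) for $\mathbf{Q}_A$ and (\ref{eq:cdet_draz}) for $\mathbf{A}^{d}$ in case (i), and (\ref{eq:det_cdet_mp_her}) with (\ref{eq:dr_rep_rdet}) in case (ii), exactly the substitute-and-repack scheme you describe. Two small points: in part (ii) you cite (\ref{eq:dr_rep_cdet}) for $\mathbf{A}^{d}$, whereas matching the stated formulas (rdet acting on $\mathbf{A}^{k+1}$, cdet acting on $\mathbf{A}^{2}$) requires (\ref{eq:dr_rep_rdet}) as the paper says; and in part (i) your inner collapse $\sum_{l}\dot{\mathbf{a}}_{.l}a_{lf}^{(k)}$ yields the $f$th column of $\mathbf{A}^{*}\mathbf{A}^{k+1}$, which is consistent with the theorem's $\widetilde{\mathbf{V}}=\mathbf{A}^{*}\mathbf{A}^{2}\mathbf{V}$ only if one redistributes the powers of $\mathbf{A}$ between the weight and the column $\hat{\mathbf{a}}_{.j}$ (the theorem uses $(\mathbf{A}^{2k+1})^{*}\mathbf{A}$ rather than the $(\mathbf{A}^{2k+1})^{*}\mathbf{A}^{k}$ of (\ref{eq:cdet_draz})). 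That last wrinkle is an ambiguity already present in the paper's own treatment of Theorem \ref{th:detrep_dmp}, so it does not count against your approach, but you should resolve the splitting explicitly if you write the argument out in full.
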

\begin{proof}The proof is similar to the proof of Theorem \ref{th:detrep_dmp}.
By (\ref{eq:rep_mpd}),
\begin{equation}\label{eq:rep_mpd_com}
a_{ij}^{{\dag},d} =  \sum\limits_{l = 1}^{n} q_{il}{a}_{lj}^{d},
 \end{equation}

(i) If ${\bf A}\in  {\mathbb{H}}^{n\times n}_s$ is an arbitrary matrix,
 we use  (\ref{eq:det_repr_proj_Q}) for the determinantal representation of ${\bf Q}_A={\bf A}^{\dag}{\bf A}=(q_{ij})$ and
 (\ref{eq:cdet_draz}) for the determinantal representation of ${\bf A}^{d}$ in (\ref{eq:rep_mpd_com}).

 (ii) If ${\bf A}\in  {\mathbb{H}}^{n\times n}_s$ is Hermitian, then  we substitute in the equation  (\ref{eq:rep_mpd_com})    the determinantal representation (\ref{eq:det_cdet_mp_her}) for ${\bf A}^{\dag}$ and  for the determinantal representation (\ref{eq:dr_rep_rdet}) for ${\bf A}^{d}$ .
\end{proof}

The next  corollary gives determinantal representations of the DMP and MPD inverses for complex matrices.
 \begin{cor}
Let ${\bf A}\in  {\mathbb{C}}^{n\times n}_s$ with $\Ind {\bf A}=k$  and $\rk \left({\bf A}^{k}\right)=s_1$. Then its DMP inverse has determinantal representations
\begin{align*}
a_{ij}^{d,{\dag}}=\frac{{\sum\limits_{\alpha \in I_{s,n} {\left\{ {j}
\right\}}} { \left| {( {\bf A}{\bf A}^{
*} )_{j.} ( {\bf {v}}_{i.})} \right|_{\alpha}
^{\alpha} } }}{\sum\limits_{\beta \in J_{s_1,n}} {{\left|
{ {{\bf A}^{k+1}}
}  \right|_{\beta} ^{\beta}}}
\sum\limits_{\alpha \in I_{s,n}}\left| { {\bf A}{\bf A}^{*}  } \right|
_{\alpha} ^{\alpha}}=\frac{{{\sum\limits_{\beta
\in J_{s_1,n} {\left\{ {i} \right\}}} { \left|
{\left( { {\bf A}^{k+1}  } \right)_{. \,i}
\left( { {\bf u}_{.j} }  \right)} \right|
 _{\beta} ^{\beta} } } }}{\sum\limits_{\beta \in J_{s_1,n}} {{\left|
{ {{\bf A}^{k+1}}
}  \right|_{\beta} ^{\beta}}}\sum\limits_{\beta \in I_{s,n}}\left| {{\bf A}{\bf A}^{*}  }
\right| _{\alpha} ^{\alpha}},
\end{align*}
where
\begin{align*}
{\bf v}_{i.}=&\left[
\sum\limits_{\beta \in J_{s_1,n} {\left\{ {i} \right\}}}
{ \left| {\left( {{\bf A}^{
k+1}  } \right)_{.i} \left( {\bf{ a}}^{(k+2)}_{.f}\right)}
\right|_{\beta} ^{\beta}} \right]\in {\mathbb{C}}^{1 \times
n},\,\,\,\,f=1,\ldots,n\\
    { {\bf u}_{.j} }=&\left[
{{{\sum\limits_{\alpha \in I_{s,n} {\left\{ {j}
\right\}}} { \left| {( {\bf A}{\bf A}^{
*}  )_{j.} ( {\bf a}_{l.}^{(k+2)})} \right|_{\alpha}
^{\alpha} } }}}
\right]\in {\mathbb{C}}^{n \times
1},\,\,\,\,l=1,\ldots,n.
\end{align*}

And, its MPD inverse has determinantal representations
\begin{align*}
&a_{ij}^{{\dag},d}=\frac{{\sum\limits_{\beta \in J_{s,n} {\left\{ {i}
\right\}}} { \left| {( {\bf A}^{
*}{\bf A} )_{.i} ( {\bf {v}}_{.j})} \right|_{\beta}
^\beta } }}{
\sum\limits_{\beta \in J_{s,n}}\left| { {\bf A}^{*} {\bf A} } \right|
_{\beta} ^{\beta}
\sum\limits_{\alpha \in I_{s_1,n}} {{\left|
{ {{\bf A}^{k+1}}
}  \right|_{\alpha} ^{\alpha}}}
}=\frac{{{\sum\limits_{\alpha
\in I_{s_1,n} {\left\{ {j} \right\}}} { \left|
{\left( { {\bf A}^{k+1}  } \right)_{j.}
\left( { {\bf u}_{i.} }  \right)} \right|
 _{\alpha} ^{\alpha} } } }}{
\sum\limits_{\beta \in J_{s,n}}\left| { {\bf A}^{ *}{\bf A}  } \right|
_{\beta} ^{\beta}
\sum\limits_{\alpha \in I_{s_1,n}} {{\left|
{ {{\bf A}^{k+1}}
}  \right|_{\alpha} ^{\alpha}}}
},
\end{align*}
where
\begin{align*}
{\bf v}_{.j}=&\left[
\sum\limits_{\alpha\in I_{s_1,n} {\left\{ {j} \right\}}}
{ \left| {\left( {{\bf A}^{
k+1}  } \right)_{j.} \left( {\bf{ a}}^{(k+2)}_{l.}\right)}
\right|_{\alpha} ^{\alpha}} \right]\in {\mathbb{C}}^{n \times
1},\,\,\,\,l=1,\ldots,n\\
    { {\bf u}_{i.} }=&\left[
{{{\sum\limits_{\beta \in J_{s,n} {\left\{ {i}
\right\}}} { \left| {( {\bf A}^{
*}{\bf A}  )_{.i} ( {\bf a}_{.f}^{(k+2)})} \right|_{\beta}
^{\beta} } }}}
\right]\in {\mathbb{C}}^{1 \times
n},\,\,\,\,f=1,\ldots,n,.
\end{align*}
\end{cor}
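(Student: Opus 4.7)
The plan is to mirror the proof of Theorem \ref{th:detrep_dmp}, swapping the roles of rows/columns and left/right since now $\mathbf{A}^{\dag,d} = \mathbf{Q}_A \mathbf{A}^d$ rather than $\mathbf{A}^d \mathbf{P}_A$. Starting from (\ref{eq:rep_mpd}) we immediately get the componentwise identity (\ref{eq:rep_mpd_com}), namely $a_{ij}^{\dag,d} = \sum_{l=1}^n q_{il}\, a_{lj}^d$. Everything then reduces to substituting the appropriate determinantal representations and repackaging the resulting double sums into a single column-replacement (or row-replacement) determinant.

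For part (i), with $\mathbf{A}$ arbitrary, I would plug in the determinantal representation (\ref{eq:det_repr_proj_Q}) of $\mathbf{Q}_A = \mathbf{A}^{\dag}\mathbf{A}$ for $q_{il}$ and the representation (\ref{eq:cdet_draz}) of $\mathbf{A}^d$ for $a_{lj}^d$. This yields a triple sum over $l$ and an auxiliary index $f$, with denominator $\sum_{\beta}|\mathbf{A}^*\mathbf{A}|_{\beta}^{\beta} \cdot \sum_{\beta}|(\mathbf{A}^{2k+1})^*\mathbf{A}^{2k+1}|_{\beta}^{\beta}$. Collecting the inner $l$-sum gives exactly the $j$th column of $\mathbf{A}^{*}\mathbf{A}^{2}$ (call it $\tilde{\mathbf{a}}_{.f}$ or rather its $f$-indexed analogue, by Lemma \ref{lem:col_combin} applied to the column replacement). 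Then, defining $v_{fj}$ as in the statement and assembling $\mathbf{V}=(v_{fj})$, the right linear combination inside the column-replaced determinant can be pulled out via Lemma \ref{lem:col_combin} to give $\tilde{\mathbf{v}}_{.j}$, the $j$th column of $\tilde{\mathbf{V}}=\mathbf{A}^{*}\mathbf{A}^{2}\mathbf{V}$. This produces the claimed formula.

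For part (ii), with $\mathbf{A}$ Hermitian, I would use the simpler Hermitian determinantal representations (\ref{eq:det_cdet_mp_her}) for $\mathbf{A}^{\dag}$ and (\ref{eq:dr_rep_rdet}) for $\mathbf{A}^d$. Substituting into $a_{ij}^{\dag,d} = \sum_l \sum_f q_{il} a_{lf} a_{fj}^d$ and noting that the middle factor $a_{lf}$ combined with the inserted columns $\mathbf{a}_{.l}$ and $\mathbf{a}_{l.}^{(k)}$ collapses (by the multilinearity of row/column determinants, Lemmas \ref{lem:row_combin}--\ref{lem:col_combin}) to an insertion of a column (resp.\ row) of $\mathbf{A}^{k+2}$, the whole expression reassembles into either the $\tilde{\mathbf{v}}_{.j}$ form or the dual $\tilde{\mathbf{u}}_{i.}$ form. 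The two equalities in the displayed formula correspond to grouping the inner sum over $l$ before the sum over $f$, or vice versa.

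The only real obstacle is bookkeeping: the sums must be regrouped in the correct order so that either Lemma \ref{lem:row_combin} or Lemma \ref{lem:col_combin} applies inside the row/column determinant (recall that these row/column determinants are not multilinear in every slot, only in the specially replaced row/column). Once the order of summation is set up to match the left-linearity of $\mathrm{rdet}$ in its replaced row, and the right-linearity of $\mathrm{cdet}$ in its replaced column, both parts follow by a direct calculation entirely parallel to that of Theorem \ref{th:detrep_dmp}, which is why the authors simply remark that the proof is analogous.
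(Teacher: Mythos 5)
Your derivation is not a proof of this corollary: it is, almost verbatim, the proof of the quaternion Theorem~\ref{th:detrep_mpd}, and it targets the wrong formulas. The corollary is the specialization to ${\mathbb{C}}^{n\times n}$; it has no split into an ``arbitrary'' case (i) and a ``Hermitian'' case (ii), and its denominators are $\sum_{\beta\in J_{s_1,n}}\left|{\bf A}^{k+1}\right|_{\beta}^{\beta}\cdot\sum_{\alpha\in I_{s,n}}\left|{\bf A}{\bf A}^{*}\right|_{\alpha}^{\alpha}$ (resp.\ with ${\bf A}^{*}{\bf A}$), not $\sum\left|({\bf A}^{2k+1})^{*}{\bf A}^{2k+1}\right|$. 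If you substitute (\ref{eq:det_repr_proj_Q}) and (\ref{eq:cdet_draz}) as in your ``part (i)'', you reproduce the quaternion formula of Theorem~\ref{th:detrep_mpd}(i) restricted to complex entries --- a valid but different (and much heavier) representation than the one stated. The point of the corollary is that over ${\mathbb{C}}$ the Drazin inverse of an \emph{arbitrary} matrix already has the light representation (\ref{eq:dr_rep_det_c}) of Corollary~\ref{cor:det_repr_Dr_c}, built on ${\bf A}^{k+1}$ alone; combining that with the complex representations of ${\bf P}_A$ and ${\bf Q}_A$ from Corollary~\ref{cor:det_repr_MP_c} and regrouping exactly as in the Hermitian case (ii) of Theorems~\ref{th:detrep_dmp} and \ref{th:detrep_mpd} (with ${\bf A}{\bf A}^{*}$, resp.\ ${\bf A}^{*}{\bf A}$, replacing ${\bf A}^{2}$ in the projector factor) is what yields the stated formulas. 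Your ``part (ii)'' paragraph is essentially that computation, but it must not be restricted to Hermitian ${\bf A}$, and the projector factors must come from Corollary~\ref{cor:det_repr_MP_c}(ii),(iii) rather than from (\ref{eq:det_cdet_mp_her})--(\ref{eq:det_rdet_mp_her}).

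A second omission: the corollary asserts representations for \emph{both} the DMP inverse ${\bf A}^{d,\dag}={\bf A}^{d}{\bf P}_A$ and the MPD inverse ${\bf A}^{\dag,d}={\bf Q}_A{\bf A}^{d}$, and you only set up the MPD half, $a_{ij}^{\dag,d}=\sum_{l}q_{il}a_{lj}^{d}$. The DMP half needs the mirror expansion $a_{ij}^{d,\dag}=\sum_{l}a_{il}^{d}p_{lj}$ from (\ref{eq:rep_dmp}) with the same complex ingredients. Note also that the regrouping over ${\mathbb{C}}$ uses the ordinary multilinearity of determinants, so the left/right ordering constraints of Lemmas~\ref{lem:row_combin} and \ref{lem:col_combin}, which you rightly flag as the delicate point over ${\mathbb{H}}$, disappear entirely here. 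With these corrections the calculation does go through along the lines you describe.
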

\subsection{Determinantal representations of the  CMP inverse}
Recently, a new generalized inverse was investigated in \cite{meh} by M. Mehdipour and A. Salemi that can be extended to quaternion matrices as follows.
\begin{defn} Suppose ${\bf A}\in  {\mathbb{H}}^{n\times n}$  the core-nilpotent decomposition   ${\bf A}= {\bf A}_1 +{\bf A}_2$, where $\Ind{\bf A}_1=\Ind{\bf A}$, ${\bf A}_2$ is nilpotent and
${\bf A}_1{\bf A}_2={\bf A}_2{\bf A}_1=0$.
\emph{The CMP inverse} of ${\bf A}$ is called the matrix  ${\bf A}^{c,\dag}:={\bf A}^{\dag}{\bf A}_1{\bf A}^{\dag}$.
\end{defn}
Similarly to complex matrices  can be proved the next lemma.
\begin{lem}
Let ${\bf A}\in  {\mathbb{H}}^{n\times n}$. The matrix ${\bf X}={\bf A}^{c,\dag}$ is the unique matrix that
satisfies the following system of equations:
\begin{align*}
{\bf X}{\bf A}{\bf X}={\bf X},~{\bf A}{\bf X}{\bf A}={\bf A}_1,~{\bf A}{\bf X}={\bf A}_1{\bf A}^{\dag},~and~ {\bf X}{\bf A}={\bf A}^{\dag}{\bf A}_1. \end{align*}
Moreover,  \begin{align}\label{eq:rep_cmp}
{\bf A}^{c,\dag}={\bf Q}_A{\bf A}^d{\bf P}_A. \end{align}
\end{lem}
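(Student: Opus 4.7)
The plan is to separate the argument into three pieces: reducing the defining formula ${\bf A}^{c,\dag}={\bf A}^{\dag}{\bf A}_1{\bf A}^{\dag}$ to \eqref{eq:rep_cmp}, verifying the four equations, and then proving uniqueness.

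First I would rewrite ${\bf A}_1$ in the purely algebraic form ${\bf A}_1={\bf A}{\bf A}^d{\bf A}$. Since ${\bf A}={\bf A}_1+{\bf A}_2$ with ${\bf A}_1{\bf A}_2={\bf A}_2{\bf A}_1=0$ and ${\bf A}_2$ nilpotent, the Drazin inverse ${\bf A}^d$ coincides with the group inverse ${\bf A}_1^{\#}$, and in particular ${\bf A}^d{\bf A}_2={\bf A}_2{\bf A}^d=0$ (this uses the complementarity of the ranges of ${\bf A}_1$ and ${\bf A}_2$ that is built into the core-nilpotent decomposition). Expanding ${\bf A}{\bf A}^d{\bf A}$ through the decomposition then collapses to ${\bf A}_1{\bf A}_1^{\#}{\bf A}_1={\bf A}_1$. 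Substituting into the definition yields ${\bf A}^{c,\dag}={\bf A}^{\dag}{\bf A}{\bf A}^d{\bf A}{\bf A}^{\dag}={\bf Q}_A{\bf A}^d{\bf P}_A$, establishing \eqref{eq:rep_cmp}.

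Second, I would verify the four equations for ${\bf X}={\bf Q}_A{\bf A}^d{\bf P}_A$, using the elementary identities ${\bf A}{\bf Q}_A={\bf A}={\bf P}_A{\bf A}$, ${\bf A}^d{\bf A}{\bf A}^d={\bf A}^d$, and ${\bf A}{\bf A}^d{\bf A}={\bf A}_1$. For instance,
\[
{\bf X}{\bf A}{\bf X}={\bf Q}_A{\bf A}^d({\bf P}_A{\bf A}){\bf Q}_A{\bf A}^d{\bf P}_A={\bf Q}_A{\bf A}^d{\bf A}{\bf A}^d{\bf P}_A={\bf Q}_A{\bf A}^d{\bf P}_A={\bf X},
\]
and ${\bf A}{\bf X}{\bf A}={\bf A}{\bf A}^d{\bf A}={\bf A}_1$ after absorbing ${\bf Q}_A$ and ${\bf P}_A$ into the outer copies of ${\bf A}$. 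The remaining equations ${\bf A}{\bf X}={\bf A}{\bf A}^d{\bf P}_A={\bf A}_1{\bf A}^{\dag}$ and ${\bf X}{\bf A}={\bf Q}_A{\bf A}^d{\bf A}={\bf A}^{\dag}{\bf A}_1$ follow in the same one-line manner.

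Finally, for uniqueness, suppose both ${\bf X}_1$ and ${\bf X}_2$ satisfy the system. The third and fourth equations force ${\bf A}{\bf X}_1={\bf A}{\bf X}_2={\bf A}_1{\bf A}^{\dag}$ and ${\bf X}_1{\bf A}={\bf X}_2{\bf A}={\bf A}^{\dag}{\bf A}_1$. Then the chain
\[
{\bf X}_1={\bf X}_1{\bf A}{\bf X}_1=({\bf X}_2{\bf A}){\bf X}_1={\bf X}_2({\bf A}{\bf X}_1)={\bf X}_2({\bf A}{\bf X}_2)={\bf X}_2{\bf A}{\bf X}_2={\bf X}_2
\]
concludes the proof. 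The only genuinely nontrivial point is the justification of ${\bf A}^d{\bf A}_2={\bf A}_2{\bf A}^d=0$ that I invoke at the outset; once that is in place, everything reduces to routine absorption of projectors and the basic identities satisfied by ${\bf A}^{\dag}$ and ${\bf A}^d$.
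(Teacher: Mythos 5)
Your proof is correct. The paper itself gives no argument for this lemma --- it only remarks that it ``can be proved similarly to complex matrices,'' deferring to the cited work of Mehdipour and Salemi --- so your write-up simply supplies the standard verification that the paper omits: reduce ${\bf A}_1$ to ${\bf A}{\bf A}^d{\bf A}$, check the four equations by absorbing the projectors, and get uniqueness from ${\bf X}_1={\bf X}_1{\bf A}{\bf X}_1=({\bf X}_2{\bf A}){\bf X}_1={\bf X}_2({\bf A}{\bf X}_2)={\bf X}_2$. The one point you flag as nontrivial, ${\bf A}^d{\bf A}_2={\bf A}_2{\bf A}^d=0$, closes purely algebraically without any appeal to range complementarity: the group inverse satisfies ${\bf A}_1^{\#}=\left({\bf A}_1^{\#}\right)^2{\bf A}_1={\bf A}_1\left({\bf A}_1^{\#}\right)^2$, so ${\bf A}_1^{\#}{\bf A}_2=\left({\bf A}_1^{\#}\right)^2{\bf A}_1{\bf A}_2=0$ and symmetrically on the other side, after which the Drazin equations for ${\bf A}_1^{\#}$ relative to ${\bf A}$ are immediate and give ${\bf A}^d={\bf A}_1^{\#}$ (here one uses that the core part ${\bf A}_1$ in the decomposition is a group matrix, i.e.\ $\Ind{\bf A}_1\le 1$, which is the intended reading of the paper's definition).
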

Taking into account (\ref{eq:rep_cmp}), it follows the next theorem  about determinantal representations of the quaternion CMP inverse.

\begin{thm}\label{th:detrep_cmp}Let ${\bf A}\in  {\mathbb{H}}^{n\times n}_s$,  $\Ind {\bf A}=m$, and $\rk \left({\bf A}^{m}\right)=s_1$. Then the  determinantal representations of its CMP inverse $ {\bf A}^{c,{\dag}}= \left(a_{ij}^{c,{\dag}}\right)$ can be expressed as

(i)~ when ${\bf A}$ is an arbitrary matrix
 \begin{align}
a_{ij}^{c,{\dag}}=&\label{eq:cdetrep_cmp} {\frac{{{\sum\limits_{\beta \in J_{s,n} {\left\{ {i}
\right\}}} {{{\rm{cdet}}_{i} {\left( {({\bf A}^{ *} {\bf A}
)_{.i} ({\bf {v}},^{(l)}_{.j} )}
\right)_{\beta} ^{\beta} } }}}}
}{{\left({\sum\limits_{\beta \in J_{s,n}} {{ {\left| {
{\bf A}^{ *} {\bf A} } \right|_{\beta
}^{\beta} }  }}}\right)^2
{\sum\limits_{\beta \in J_{s_1,n}} {{\left| {\left({\bf A}^{2m+1} \right)^{*} {\bf A}^{2m+1}
}   \right|_{\beta} ^{\beta}}}}
}}}=\\=&\label{eq:rdetrep_cmp}
 {\frac{\sum\limits_{\alpha \in I_{s,n} {\left\{ {j}
\right\}}}{{\rm{rdet}} _{j} \left( {\left( { {\bf A}{\bf A}^{*}
} \right)_{j.} ( {\bf w}\,^{(l)}_{i.})} \right)_{\alpha}
^{\alpha} }
}{{\left({\sum\limits_{\alpha \in I_{s,n}} {{ {\left| {
{\bf A} {\bf A}^{ *} } \right|_{\alpha
}^{\alpha} }  }}}\right)^2
{\sum\limits_{\beta \in J_{s_1,n}} {{\left|  {\left({\bf A}^{2m+1} \right)^{*} {\bf A}^{2m+1}
}   \right|_{\beta} ^{\beta}}}}
}}}
\end{align}
for all $l=1, 2$,
where

\begin{align}\label{eq:v1}
{\bf v}^{(1)}_{.j}=&\left[
{{{\sum\limits_{\alpha \in I_{s,n} {\left\{ {j}
\right\}}} {{\rm{rdet}} _{j} \left( {\left( { {\bf A}{\bf A}^{*}
} \right)_{j.} ( {\hat{\bf u}}_{t.})} \right)_{\alpha}
^{\alpha} } }}} \right]\in {\mathbb{H}}^{n \times
1},~t=1,\ldots,n,\\\label{eq:w1}
{\bf w}^{(1)}_{i.}=&\left[
{{{\sum\limits_{\beta \in J_{s,n} {\left\{ {i}
\right\}}} {{\rm{cdet}} _{i} \left( {\left( { {\bf A}^{*}{\bf A}
} \right)_{.i} ( {\hat{\bf u}}_{.k})} \right)_{\beta}
^{\beta} } }}} \right]\in {\mathbb{H}}^{1 \times
n},~k=1,\ldots,n,\\\label{eq:v2}
{\bf v}^{(2)}_{.j}=&\left[
{{{\sum\limits_{\alpha \in I_{s,n} {\left\{ {j}
\right\}}} {{\rm{rdet}} _{j} \left( {\left( { {\bf A}{\bf A}^{*}
} \right)_{j.} ( {\hat{\bf g}}_{t.})} \right)_{\alpha}
^{\alpha} } }}} \right]\in {\mathbb{H}}^{n \times
1},~t=1,\ldots,n,\\\label{eq:w2}
{\bf w}^{(2)}_{i.}=&\left[
{{{\sum\limits_{\beta \in J_{s,n} {\left\{ {i}
\right\}}} {{\rm{cdet}} _{i} \left( {\left( { {\bf A}^{*}{\bf A}
} \right)_{.i} ( {\hat{\bf g}}_{.k})} \right)_{\beta}
^{\beta} } }}} \right]\in {\mathbb{H}}^{1 \times
n},~k=1,\ldots,n.
\end{align}
Here
  ${\hat{\bf u}}_{t.}$ is the $t$th row and ${\hat{\bf u}}_{.k}$ is  the $k$th column of
$\hat{\bf U}:=
{\bf U}{\bf A}^{ m+1}{\bf A}^{*}$, ${\hat{\bf g}}_{t.}$ is the $t$th row and ${\hat{\bf g}}_{.k}$ is  the $k$th column of
$\hat{\bf G}:={\bf A}^{*}{\bf A}^{ 2}
{\bf G}$, and the matrices ${\bf U}=(u_{ij})\in{\mathbb{H}}^{n \times
n}$ and ${\bf G}=(g_{ij})\in{\mathbb{H}}^{n \times
n}$ are  such that 
\begin{align*}
 u_{ij}=&{\sum\limits_{\alpha \in I_{s_1,n} {\left\{ {j}
\right\}}} {{\rm{rdet}} _{j} \left( {\left( { {\bf A}^{ 2m+1} \left({\bf A}^{ 2m+1} \right)^{*}
} \right)_{j.} ( {\bf a}^{(1)}_{i.})} \right)_{\alpha}
^{\alpha} } },\\
g_{ij}=&{\sum\limits_{\beta \in J_{s_1,n} {\left\{ {i}
\right\}}} {{\rm{cdet}} _{i} \left( {\left(\left({\bf A}^{ 2m+1} \right)^{*}{\bf A}^{ 2m+1} \right)_{. i} \left({\bf a}^{(2)}_{.j}
\right)} \right)_{\beta} ^{\beta} } },
\end{align*}
 where ${\bf a}^{(1)}_{i.}$ is the $i$th row of $ {\bf A}_1:=
{\bf A}^*{\bf A}^{m+1}({\bf A}^{ 2m+1})^{*}$ and ${\bf a}^{(2)}_{.j}$ is the $j$th column of $ {\bf A}_2:=
({\bf A}^{ 2m+1})^*{\bf A}^{m+1}{\bf A}^{*}$.

(ii)~If ${\bf A}$ is Hermitian, then

 \begin{align}\label{eq:cdetrep_cmp_h}
&a_{ij}^{c,{\dag}}= {\frac{{{\sum\limits_{\beta \in J_{s,n} {\left\{ {i}
\right\}}} {{{\rm{cdet}}_{i} {\left( {({\bf A}^{ 2}
)_{.i} ({\bf {v}}^{(l)}_{.j} )}
\right)_{\beta} ^{\beta} } }}}}
}{{\left({\sum\limits_{\beta \in J_{s,n}} {{ {\left| {
{\bf A}^{2} } \right|_{\beta
}^{\beta} }  }}}\right)^2
{\sum\limits_{\beta \in J_{s_1,n}} {{\left| {\left( { {\bf A}^{m+1}
} \right)_{\beta} ^{\beta}
}  \right|}}}
}}}=\\\label{eq:rdetrep_cmp_h}
&a_{ij}^{c,{\dag}}= {\frac{\sum\limits_{\alpha \in I_{s,n} {\left\{ {j}
\right\}}}{{\rm{rdet}} _{j} \left( {\left( { {\bf A}^{2}
} \right)_{j.} ( {\bf w}^{(l)}_{i.})} \right)_{\alpha}
^{\alpha} }
}{{\left({\sum\limits_{\alpha \in I_{s,n}} {{ {\left| {
 {\bf A}^{ 2} } \right|_{\alpha
}^{\alpha} }  }}}\right)^2
{\sum\limits_{\beta \in J_{s_1,n}} {{\left| {\left( { {\bf A}^{m+1}
} \right)_{\beta} ^{\beta}
}  \right|}}}
}}}
\end{align}
for all $l=1, 2$,
where

\begin{align}\label{eq:v1_h}
{\bf v}^{(1)}_{.j}=&\left[
{{{\sum\limits_{\alpha \in I_{s,n} {\left\{ {j}
\right\}}} {{\rm{rdet}} _{j} \left( {\left( { {\bf A}^{2}
} \right)_{j.} ( {\hat{\bf u}}_{t.})} \right)_{\alpha}
^{\alpha} } }}} \right]\in {\mathbb{H}}^{n \times
1},~t=1,\ldots,n,\\\label{eq:w1_h}
{\bf w}^{(1)}_{i.}=&\left[
{{{\sum\limits_{\beta \in J_{s,n} {\left\{ {i}
\right\}}} {{\rm{cdet}} _{i} \left( {\left( { {\bf A}^{2}
} \right)_{.i} ( {\hat{\bf u}}_{.k})} \right)_{\beta}
^{\beta} } }}} \right]\in {\mathbb{H}}^{1 \times
n},~k=1,\ldots,n,\\\label{eq:v2_h}
{\bf v}^{(2)}_{.j}=&\left[
{{{\sum\limits_{\alpha \in I_{s,n} {\left\{ {j}
\right\}}} {{\rm{rdet}} _{j} \left( {\left( { {\bf A}^{2}
} \right)_{j.} ( {\hat{\bf g}}_{t.})} \right)_{\alpha}
^{\alpha} } }}} \right]\in {\mathbb{H}}^{n \times
1},~t=1,\ldots,n,\\\label{eq:w2_h}
{\bf w}^{(2)}_{i.}=&\left[
{{{\sum\limits_{\beta \in J_{s,n} {\left\{ {i}
\right\}}} {{\rm{cdet}} _{i} \left( {\left( { {\bf A}^{2}
} \right)_{.i} ( {\hat{\bf g}}_{.k})} \right)_{\beta}
^{\beta} } }}} \right]\in {\mathbb{H}}^{1 \times
n},~k=1,\ldots,n.
\end{align}
Here  ${\widehat{\bf u}}_{t.}$ is the $t$th row and ${\widehat{\bf u}}_{.k}$ is  the $k$th column of
$widehat{\bf U}:=
{\bf U}{\bf A}^{ 2}$, ${\hat{\bf g}}_{t.}$ is the $t$th row and ${\widehat{\bf g}}_{.k}$ is  the $k$th column of
$\widehat{\bf G}:={\bf A}^{ 2}
{\bf G}$, and the matrices ${\bf U}=(u_{ij})\in{\mathbb{H}}^{n \times
n}$ and ${\bf G}=(g_{ij})\in{\mathbb{H}}^{n \times
n}$ are  such that
\begin{align*}
 u_{ij}=&{\sum\limits_{\alpha \in I_{s_1,n} {\left\{ {j}
\right\}}} {{\rm{rdet}} _{j} \left( {\left( { {\bf A}^{ m+1}
} \right)_{j.} ( {\bf a}^{(m+2)}_{i.})} \right)_{\alpha}
^{\alpha} } },\\
g_{ij}=&{\sum\limits_{\beta \in J_{s_1,n} {\left\{ {i}
\right\}}} {{\rm{cdet}} _{i} \left( {\left({\bf A}^{ m+1} \right)_{. i} \left({\bf a}^{(m+2)}_{.j}
\right)} \right)_{\beta} ^{\beta} } }.
\end{align*}
\end{thm}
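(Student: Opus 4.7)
The plan is to exploit the decomposition in equation (\ref{eq:rep_cmp}), namely ${\bf A}^{c,\dag}={\bf Q}_A {\bf A}^d {\bf P}_A$, by writing the $(i,j)$-entry as the triple sum
\begin{equation*}
a^{c,\dag}_{ij} \;=\; \sum_{k=1}^n \sum_{l=1}^n q_{ik}\, a^d_{kl}\, p_{lj},
\end{equation*}
into which one substitutes the determinantal representations of $q_{ik}$ (Corollary \ref{cor:det_repr_proj_Q}), $p_{lj}$ (Corollary \ref{cor:det_repr_proj_P}) and $a^d_{kl}$ (Lemma \ref{lem:det_repr_draz}). The two forms indexed by $l=1,2$ correspond to the two admissible determinantal representations of ${\bf A}^d$: the rdet formula (\ref{eq:rdet_draz}) gives rise to the auxiliary matrix ${\bf U}$ and hence to the vectors ${\bf v}^{(1)}_{.j},{\bf w}^{(1)}_{i.}$ in (\ref{eq:v1})--(\ref{eq:w1}), while the cdet formula (\ref{eq:cdet_draz}) gives rise to ${\bf G}$ and thence to ${\bf v}^{(2)}_{.j},{\bf w}^{(2)}_{i.}$ in (\ref{eq:v2})--(\ref{eq:w2}), exactly as in the construction carried out in the proofs of Theorems \ref{th:detrep_dmp} and \ref{th:detrep_mpd}.

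For case (i) with $l=1$, I would substitute (\ref{eq:rdet_draz}) for $a^d_{kl}$ and collect the terms carrying the Drazin's ``numerator'' rdet expression into a single matrix entry $u_{kl}$; this yields ${\bf Q}_A{\bf A}^d{\bf P}_A = \frac{1}{\Delta}\,{\bf Q}_A\,{\bf U}\,{\bf A}^{m}\,{\bf P}_A$ for an appropriate product of the denominators $\Delta$. Since ${\bf A}^{m}{\bf P}_A={\bf A}^{m+1}{\bf A}^{\dag}$, combining ${\bf U}{\bf A}^{m}$ with the rdet representation (\ref{eq:det_repr_proj_P}) of ${\bf P}_A$ and invoking the row-linearity of rdet (Lemma \ref{lem:row_combin} adapted to rdet as in the proof of Theorem \ref{th:detrep_dmp}) collapses the intermediate summation into a single rdet of $({\bf A}{\bf A}^{*})_{j.}$ with its $j$th row replaced by a row of $\hat{\bf U}:={\bf U}{\bf A}^{m+1}{\bf A}^{*}$; this produces the rdet form (\ref{eq:rdetrep_cmp}). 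Next, one applies the cdet representation (\ref{eq:det_repr_proj_Q}) of ${\bf Q}_A$ on the left, using Lemma \ref{lem:col_combin}, to obtain the cdet form (\ref{eq:cdetrep_cmp}). The case $l=2$ is entirely parallel: one starts from (\ref{eq:cdet_draz}) for ${\bf A}^d$, defines ${\bf G}$, and uses ${\bf A}^{*}{\bf A}{\bf A}^d \cdot {\bf A}^{m}{\bf P}_A$ regrouping to produce the conjugate-side auxiliary matrix $\hat{\bf G}={\bf A}^{*}{\bf A}^{2}{\bf G}$.

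For case (ii) the argument is formally identical but uses the simpler Hermitian determinantal representations (\ref{eq:det_cdet_mp_her})--(\ref{eq:det_rdet_mp_her}) of ${\bf A}^{\dag}$ and (\ref{eq:dr_rep_cdet})--(\ref{eq:dr_rep_rdet}) of ${\bf A}^{d}$; the role played by ${\bf A}^{m+1}({\bf A}^{2m+1})^{*}$ in the general case is taken over by ${\bf A}^{m+1}$, and the product ${\bf A}^{k}\cdot{\bf A}^{2}={\bf A}^{k+2}$ emerging between the Drazin factor and the Moore--Penrose factor is what produces the column ${\bf a}^{(m+2)}_{.j}$ appearing in the definitions of ${\bf U}$ and ${\bf G}$ in the Hermitian statement, yielding (\ref{eq:cdetrep_cmp_h})--(\ref{eq:rdetrep_cmp_h}).

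The hardest part is the bookkeeping: each of the three factors ${\bf Q}_A$, ${\bf A}^d$ and ${\bf P}_A$ carries its own summation range (the sets $I_{s,n}\{\,\cdot\,\}$, $J_{s,n}\{\,\cdot\,\}$, $I_{s_1,n}\{\,\cdot\,\}$, $J_{s_1,n}\{\,\cdot\,\}$) and its own principal-minor indices $\alpha$ or $\beta$. One must verify at each step that the summation to be collapsed is along an index on which the rdet or cdet being manipulated is genuinely multilinear (Lemmas \ref{lem:row_combin}--\ref{lem:col_combin}), so that the absorbed column/row of the auxiliary matrix does not destroy the principal-submatrix structure required in the remaining rdet/cdet. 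Once this matching of indices is correctly carried out on one of the four variants, the remaining three follow by symmetric substitutions and the formulae (\ref{eq:cdetrep_cmp})--(\ref{eq:rdetrep_cmp_h}) are obtained.
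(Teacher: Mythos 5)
Your proposal follows essentially the same route as the paper: it starts from ${\bf A}^{c,\dag}={\bf Q}_A{\bf A}^d{\bf P}_A$, substitutes the determinantal representations of the two projectors and of ${\bf A}^d$ (rdet form for $l=1$ giving ${\bf U},\hat{\bf U}$; cdet form for $l=2$ giving ${\bf G},\hat{\bf G}$), and collapses the intermediate sums by the row/column linearity of ${\rm rdet}/{\rm cdet}$, exactly as in the paper's proof. The index bookkeeping you flag as the delicate point is indeed where the paper spends its effort (via unit row/column vectors), and your sketch handles it correctly.
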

\begin{proof}
By (\ref{eq:rep_cmp}),
\begin{equation}\label{eq:cmp_start}
a_{ij}^{c,{\dag}} =  \sum_{l = 1}^n \sum_{k = 1}^n q_{il}{a}_{lk}^{d} p_{kj},
\end{equation}
where ${\bf Q}_A=(q_{il})$, ${\bf A}^d=(a^d_{il})$, and ${\bf P}_A=(p_{il})$.

(i) Let ${\bf A}\in  {\mathbb{H}}^{n\times n}_s$ be an arbitrary matrix.

a) Using  (\ref{eq:rdet_draz}), (\ref{eq:det_repr_proj_Q}), and (\ref{eq:det_repr_proj_P}) for the determinantal representations of   ${\bf A}^{d}$, ${\bf Q}_A={\bf A}^{\dag}{\bf A}$, and ${\bf P}_A={\bf A}{\bf A}^{\dag}$, respectively, we obtain

\begin{align*}
a_{ij}^{c,{\dag}} =&  \sum_{l = 1}^n \sum_{k = 1}^n {\frac{{{\sum\limits_{\beta \in J_{s,n} {\left\{ {i}
\right\}}} {{\rm{cdet}} _{i} \left( {\left( {{\bf A}^{ *}
{\bf A}} \right)_{.i} \left({\bf \dot{a}}_{.l} \right)}
\right)  _{\beta} ^{\beta} } }}}{{{\sum\limits_{\beta
\in J_{r,n}}  {{ \left| {{\bf A}^{ *}  {\bf
A}} \right|_{\beta}^{\beta} }}}} }}\times\\
&{\frac{\sum\limits_{t = 1}^{n}\left({{\sum\limits_{\alpha \in I_{s_1,n} {\left\{ {t}
\right\}}} {{\rm{rdet}} _{t} \left( {\left( { {\bf A}^{ 2m+1} \left({\bf A}^{ 2m+1} \right)^{*}
} \right)_{. t} (\check{ {\bf a}}_{l.})} \right) _{\alpha} ^{\alpha} } }
}\right){a}_{tk}^{(m)}}{{{\sum\limits_{\alpha \in I_{s_1,n}} {{\left|  { {\bf A}^{ 2m+1} \left({\bf A}^{2m+1} \right)^{*}
}  \right| _{\alpha} ^{\alpha}
}}} }}}\times\\&
{\frac{{{\sum\limits_{\alpha \in I_{s,n} {\left\{ {j}
\right\}}} {{{\rm{rdet}} _{j} {\left( {({\bf A} {\bf A}^{ *}
)_{j .} ({\bf \ddot{a}}  _{k  .} )}
\right)  _{\alpha} ^{\alpha} } }}}
}}{{{\sum\limits_{\alpha \in I_{s,n}} {{ {\left| {
{\bf A} {\bf A}^{ *} } \right| _{\alpha
}^{\alpha} }  }}} }}}=\end{align*}\begin{align*}
=& \sum_{l } \sum_{t } {\frac{{{\sum\limits_{\beta \in J_{s,n} {\left\{ {i}
\right\}}} {{\rm{cdet}} _{i} \left( {\left( {{\bf A}^{ *}
{\bf A}} \right)_{.i} \left({\bf \dot{a}}_{.l} \right)}
\right)  _{\beta} ^{\beta} } }}}{{{\sum\limits_{\beta
\in J_{s,n}}  {{ \left| {{\bf A}^{ *}  {\bf
A}} \right|_{\beta}^{\beta} }}}} }}\times\\&
{\frac{{{\sum\limits_{\alpha \in I_{s_1,n} {\left\{ {t}
\right\}}} {{\rm{rdet}} _{t} \left( {\left( { {\bf A}^{ 2m+1} \left({\bf A}^{ 2m+1} \right)^{*}
} \right)_{. t} (\check{ {\bf a}}_{l.})} \right) _{\alpha} ^{\alpha} } }
}}{{{\sum\limits_{\alpha \in I_{s_1,n}} {{\left|  { {\bf A}^{ 2m+1} \left({\bf A}^{2m+1} \right)^{*}
}  \right| _{\alpha} ^{\alpha}
}}} }}}\times\\
&{\frac{{{\sum\limits_{\alpha \in I_{s,n} {\left\{ {j}
\right\}}} {{{\rm{rdet}} _{j} {\left( {({\bf A} {\bf A}^{ *}
)_{j .} ({\bf \tilde{a}}  _{t .} )}
\right)  _{\alpha} ^{\alpha} } }}}
}}{{{\sum\limits_{\alpha \in I_{s,n}} {{ {\left| {
{\bf A} {\bf A}^{ *} } \right| _{\alpha
}^{\alpha} }  }}} }}},
\end{align*}
where ${\check {\bf a}}_{l.}$ is the $l$th row of $\check{ {\bf A}}:=
{\bf A}^m({\bf A}^{ 2m+1})^{*}$ and ${\bf \tilde{a}}  _{t .}$ is the $t$th row of ${\tilde{\bf A}}:=
{\bf A}^{m+1}{\bf A}^{*}$. Denote $ {\bf A}^*{\bf A}^{m+1}({\bf A}^{ 2m+1})^{*}=:{\bf A}_1=( a^{(1)}_{ij})$. So, it is clear that
\begin{align*}
a_{ij}^{c,{\dag}} = & \sum_{l } \sum_{t } \sum_{k }{\frac{{{\sum\limits_{\beta \in J_{s,n} {\left\{ {i}
\right\}}} {{\rm{cdet}} _{i} \left( {\left( {{\bf A}^{ *}
{\bf A}} \right)_{.i} \left({\bf {e}}_{.t} \right)}
\right)  _{\beta} ^{\beta} } }}}{{{\sum\limits_{\beta
\in J_{s,n}}  {{ \left| {{\bf A}^{ *}  {\bf
A}} \right|_{\beta}^{\beta} }}}} }}~a^{(1)}_{tk}\times
\\&{\frac{{{\sum\limits_{\alpha \in I_{s_1,n} {\left\{ {l}
\right\}}} {{\rm{rdet}} _{l} \left( {\left( { {\bf A}^{ 2m+1} \left({\bf A}^{ 2m+1} \right)^{*}
} \right)_{l.} ({ {\bf e}}_{k.})} \right) _{\alpha} ^{\alpha} } }
}}{{{\sum\limits_{\alpha \in I_{s,n}} {{\left|  { {\bf A}^{ 2m+1} \left({\bf A}^{2m+1} \right)^{*}
}  \right| _{\alpha} ^{\alpha}
}}} }}}\times\\
&{\frac{{{\sum\limits_{\alpha \in I_{s,n} {\left\{ {j}
\right\}}} {{{\rm{rdet}} _{j} {\left( {({\bf A} {\bf A}^{ *}
)_{j .} ({\bf \tilde{a}}  _{l  .} )}
\right)  _{\alpha} ^{\alpha} } }}}
}}{{{\sum\limits_{\alpha \in I_{s,n}} {{ {\left| {
{\bf A} {\bf A}^{ *} } \right| _{\alpha
}^{\alpha} }  }}} }}},
\end{align*}
where $ {\bf e}_{.t}$ and $ {\bf e}_{k.}$  are
the unit column and row vectors.

If we denote by
  \begin{align}\nonumber u_{tl}:=&\sum\limits_{k} a^{(1)}_{tk}
{{{\sum\limits_{\alpha \in I_{s_1,n} {\left\{ {j}
\right\}}} {{\rm{rdet}} _{l} \left( {\left( { {\bf A}^{ 2m+1} \left({\bf A}^{ 2m+1} \right)^{*}
} \right)_{l.} ( {\bf e}_{k.})} \right)_{\alpha}
^{\alpha} } }}}=\\&{{{\sum\limits_{\alpha \in I_{s_1,n} {\left\{ {j}
\right\}}} {{\rm{rdet}} _{l} \left( {\left( { {\bf A}^{ 2m+1} \left({\bf A}^{ 2m+1} \right)^{*}
} \right)_{l.} ( {\bf a}^{(1)}_{t.})} \right)_{\alpha}
^{\alpha} } }}}\label{eq:comp_u}
\end{align}
 the $t$th component of a column-vector ${\bf u}_{.\,l}=\left[u_{1l},\ldots,u_{nl}\right]$, then
$$\sum\limits_{t}
{{\sum\limits_{\beta
\in J_{s,n} {\left\{ {i} \right\}}} {{\rm{cdet}} \left( {\left( {{\bf A}^{ *}
{\bf A}} \right)_{.i}
\left( { {\bf e}_{.t} }  \right)} \right)
 _{\beta} ^{\beta} } } }
\,u_{tl}={{\sum\limits_{\beta
\in J_{s,n} {\left\{ {i} \right\}}} {{\rm{cdet}} _{i}  \left( {\left( {{\bf A}^{ *}
{\bf A}} \right)_{.i}
\left( { {\bf u}_{.l} }  \right)} \right)_{\beta} ^{\beta} } } }.
$$
It follows that
\begin{align*}
a_{ij}^{c,{\dag}} = & \sum_{l }{\frac{\sum\limits_{\beta \in J_{s,n} {\left\{ {i}
\right\}}} {{\rm{cdet}} _{i} \left( {\left( {{\bf A}^{ *}
{\bf A}} \right)_{.i} \left({\bf {u}}_{.l} \right)}
\right)  _{\beta} ^{\beta} }
{\sum\limits_{\alpha \in I_{s,n} {\left\{ {j}
\right\}}} {{{\rm{rdet}} _{j} {\left( {({\bf A} {\bf A}^{ *}
)_{j .} ({\bf \tilde{a}}  _{l .} )}
\right)  _{\alpha} ^{\alpha} } }}}
}{\sum\limits_{\beta
\in J_{s,n}}  {{ \left| {{\bf A}^{ *}  {\bf
A}} \right|_{\beta}^{\beta} }
{\sum\limits_{\alpha \in I_{s_1,n}} {{\left|  { {\bf A}^{ 2m+1} \left({\bf A}^{2m+1} \right)^{*}
}  \right| _{\alpha} ^{\alpha}
}}}{\sum\limits_{\alpha \in I_{s,n}} {{ {\left| {
{\bf A} {\bf A}^{ *} } \right| _{\alpha
}^{\alpha} }  }}}
} }}.
\end{align*}
Construct the matrix  ${\bf U}=\left(u_{tl}\right)\in  {\mathbb{H}}^{n \times n}$, where $u_{tl}$ is given by (\ref{eq:comp_u}).
Denote ${\bf U}{\tilde{\bf A}}=
{\bf U}{\bf A}^{m+1}{\bf A}^{*}=:\hat{\bf U}=\left({\hat u}_{tk}\right)$. Then, taking into account that $\left| {{\bf A}^{ *}  {\bf
A}} \right|_{\beta}^{\beta}=\left| {
{\bf A} {\bf A}^{ *} } \right| _{\alpha
}^{\alpha}$, we have

\begin{align*}
a_{ij}^{c,{\dag}} = &{\frac{ \sum_{t}\sum_{k}\sum\limits_{\beta \in J_{s,n} {\left\{ {i}
\right\}}} {{\rm{cdet}} _{i} \left( {\left( {{\bf A}^{ *}
{\bf A}} \right)_{.i} \left({\bf {e}}_{.t} \right)}
\right)  _{\beta} ^{\beta} }{\hat {u}}_{tk}
{\sum\limits_{\alpha \in I_{s,n} {\left\{ {j}
\right\}}} {{{\rm{rdet}} _{j} {\left( {({\bf A} {\bf A}^{ *}
)_{j .} ({\bf {e}}  _{k  .} )}
\right)  _{\alpha} ^{\alpha} } }}}
}{\sum\limits_{\beta
\in J_{s,n}}  {{ \left(\left| {{\bf A}^{ *}  {\bf
A}} \right|_{\beta}^{\beta}\right)^2 }
{\sum\limits_{\alpha \in I_{s_1,n}} {{\left|  { {\bf A}^{ 2m+1} \left({\bf A}^{2m+1} \right)^{*}
}  \right| _{\alpha} ^{\alpha}
}}}
} }}.
\end{align*}

If we denote by
  \begin{align*}v^{(1)}_{tj}:=\sum\limits_{k}{\hat u}_{tk}
{{{\sum\limits_{\alpha \in I_{s,n} {\left\{ {j}
\right\}}} {{\rm{rdet}} _{j} \left( {\left( { {\bf A}{\bf A}^{*}
} \right)_{j.} ( {\bf e}_{k.})} \right)_{\alpha}
^{\alpha} } }}}={{{\sum\limits_{\alpha \in I_{s,n} {\left\{ {j}
\right\}}} {{\rm{rdet}} _{j} \left( {\left( { {\bf A}{\bf A}^{*}
} \right)_{j.} ( {\hat{\bf u}}_{t.})} \right)_{\alpha}
^{\alpha} } }}}
\end{align*}
 the $t$th component of a column-vector ${\bf v}^{(1)}_{.j}=\left[v^{(1)}_{1j},\ldots,v^{(1)}_{nj}\right]$, then
$$\sum\limits_{t}
{{\sum\limits_{\beta
\in J_{s,n} {\left\{ {i} \right\}}} {{\rm{cdet}} \left( {\left( {{\bf A}^{ *}
{\bf A}} \right)_{.i}
\left( { {\bf e}_{.t} }  \right)} \right)
 _{\beta} ^{\beta} } } }
\,v^{(1)}_{tj}={{\sum\limits_{\beta
\in J_{s,n} {\left\{ {i} \right\}}} {{\rm{cdet}} _{i}  \left( {\left( {{\bf A}^{ *}
{\bf A}} \right)_{.i}
\left( { {\bf v}^{(1)}_{.j} }  \right)} \right)_{\beta} ^{\beta} } } }.
$$
Thus we have (\ref{eq:cdetrep_cmp}) with ${\bf v}^{(1)}_{.j}$ from (\ref{eq:v1}).

If we denote by
  \begin{align*} w^{(1)}_{ik}:=\sum\limits_{t}
{{{\sum\limits_{\beta \in J_{s,n} {\left\{ {i}
\right\}}} {{\rm{cdet}} _{i} \left( {\left( { {\bf A}^{*}{\bf A}
} \right)_{.i} ( {\bf e}_{.t})} \right)_{\beta}
^{\beta} } }}}{\hat u}_{tk}={{{\sum\limits_{\beta \in J_{s,n} {\left\{ {i}
\right\}}} {{\rm{cdet}} _{i} \left( {\left( { {\bf A}^{*}{\bf A}
} \right)_{.i} ( {\hat{\bf u}}_{.k})} \right)_{\beta}
^{\beta} } }}}
\end{align*}
 the $k$th component of a row-vector ${\bf w}^{(1)}_{i.}=\left[w^{(1)}_{i1},\ldots,w^{(1)}_{in}\right]$, then
$$\sum\limits_{k}w^{(1)}_{ik}\sum\limits_{\alpha \in I_{s,n} {\left\{ {j}
\right\}}}
{{\rm{rdet}} _{j} \left( {\left( { {\bf A}{\bf A}^{*}
} \right)_{j.} ( {\bf e}_{k.})} \right)_{\alpha}
^{\alpha} }=\sum\limits_{\alpha \in I_{s,n} {\left\{ {j}
\right\}}}{{\rm{rdet}} _{j} \left( {\left( { {\bf A}{\bf A}^{*}
} \right)_{j.} ( {\bf w}^{(1)}_{i.})} \right)_{\alpha}
^{\alpha} }.
$$
Thus we have (\ref{eq:rdetrep_cmp}) with ${\bf w}^{(1)}_{i.}$ from (\ref{eq:w1}).

b)
By
using the determinantal representation (\ref{eq:cdet_draz}) for  ${\bf A}^{d}$ in (\ref{eq:cmp_start}), we get
\begin{align*}
a_{ij}^{c,{\dag}} =&  \sum_{l = 1}^n \sum_{k = 1}^n {\frac{{{\sum\limits_{\beta \in J_{s,n} {\left\{ {i}
\right\}}} {{\rm{cdet}} _{i} \left( {\left( {{\bf A}^{ *}
{\bf A}} \right)_{.i} \left({\bf \dot{a}}_{.l} \right)}
\right)  _{\beta} ^{\beta} } }}}{{{\sum\limits_{\beta
\in J_{s,n}}  {{ \left| {{\bf A}^{ *}  {\bf
A}} \right|_{\beta}^{\beta} }}}} }}\times\\
& {\frac{{ \sum\limits_{t = 1}^{n} {a}_{lt}^{(m)}   {\sum\limits_{\beta \in J_{s_1,n} {\left\{ {t}
\right\}}} {{\rm{cdet}} _{t} \left( {\left(\left({\bf A}^{ 2m+1} \right)^{*}{\bf A}^{ 2m+1} \right)_{. t} \left( \tilde{ {\bf a}}_{.k}
\right)} \right)_{\beta} ^{\beta} } }
}}{{{\sum\limits_{\beta \in J_{s_1,n}} {{\left| {\left({\bf A}^{ 2m+1} \right)^{*}{\bf A}^{ 2m+1}
}  \right| _{\beta} ^{\beta}}}} }}}\times\\
&{\frac{{{\sum\limits_{\alpha \in I_{s,n} {\left\{ {j}
\right\}}} {{{\rm{rdet}} _{j} {\left( {({\bf A} {\bf A}^{ *}
)_{j .} ({\bf \ddot{a}}  _{k  .} )}
\right)  _{\alpha} ^{\alpha} } }}}
}}{{{\sum\limits_{\alpha \in I_{s,n}} {{ {\left| {
{\bf A} {\bf A}^{ *} } \right| _{\alpha
}^{\alpha} }  }}} }}}=\end{align*}\begin{align*}=&
 \sum_{k } \sum_{t } {\frac{{{\sum\limits_{\beta \in J_{s,n} {\left\{ {i}
\right\}}} {{\rm{cdet}} _{i} \left( {\left( {{\bf A}^{ *}
{\bf A}} \right)_{.i} \left({\bf \check{a}}_{.t} \right)}
\right)  _{\beta} ^{\beta} } }}}{{{\sum\limits_{\beta
\in J_{s,n}}  {{ \left| {{\bf A}^{ *}  {\bf
A}} \right|_{\beta}^{\beta} }}}} }} \times\\
&{\frac{{   {\sum\limits_{\beta \in J_{s_1,n} {\left\{ {t}
\right\}}} {{\rm{cdet}} _{t} \left( {\left(\left({\bf A}^{ 2m+1} \right)^{*}{\bf A}^{ 2m+1} \right)_{. t} \left(\tilde{ {\bf a}}_{.k}
\right)} \right)  _{\beta} ^{\beta} } }
}}{{{\sum\limits_{\beta \in J_{s_1,n}} {{\left| {\left({\bf A}^{ 2m+1} \right)^{*}{\bf A}^{ 2m+1}
}  \right|_{\beta} ^{\beta}}}} }}}\times\\
&{\frac{{{\sum\limits_{\alpha \in I_{s,n} {\left\{ {j}
\right\}}} {{{\rm{rdet}} _{j} {\left( {({\bf A} {\bf A}^{ *}
)_{j .} ({\bf \ddot{a}}  _{k .} )}
\right)  _{\alpha} ^{\alpha} } }}}
}}{{{\sum\limits_{\alpha \in I_{s,n}} {{ {\left| {
{\bf A} {\bf A}^{ *} } \right| _{\alpha
}^{\alpha} }  }}} }}},
\end{align*}
where ${\bf \check{a}}  _{.t}$ is the $t$th column of ${\check{\bf A}}:=
{\bf A}^{*}{\bf A}^{ 2}$  and ${\tilde {\bf a}}_{.k}$ is the $k$th column of $\tilde{ {\bf A}}=
({\bf A}^{ 2m+1})^{*}{\bf A}$.

Denote ${\bf A}_2=({ a}^{(2)}_{ij})=
({\bf A}^{ 2m+1})^{*}{\bf A}^2{\bf A}^*$. So, it is clear that
\begin{align*}
a_{ij}^{c,{\dag}} = & \sum_{l} \sum_{k } \sum_{t } {\frac{{{\sum\limits_{\beta \in J_{s,n} {\left\{ {i}
\right\}}} {{\rm{cdet}} _{i} \left( {\left( {{\bf A}^{ *}
{\bf A}} \right)_{.i} \left({\bf \check{a}}_{.t} \right)}
\right)  _{\beta} ^{\beta} } }}}{{{\sum\limits_{\beta
\in J_{s,n}}  {{ \left| {{\bf A}^{ *}  {\bf
A}} \right|_{\beta}^{\beta} }}}} }}\times\\
& {\frac{{   {\sum\limits_{\beta \in J_{s_1,n} {\left\{ {t}
\right\}}} {{\rm{cdet}} _{t} \left( {\left(\left({\bf A}^{ 2m+1} \right)^{*}{\bf A}^{ 2m+1} \right)_{. t} \left( { {\bf e}}_{.k}
\right)} \right)  _{\beta} ^{\beta} } }
}}{{{\sum\limits_{\beta \in J_{s_1,n}} {{\left| {\left(\left({\bf A}^{ 2m+1} \right)^{*}{\bf A}^{ 2m+1} \right) _{\beta} ^{\beta}
}  \right|}}} }}}~{a}^{(2)}_{kl}\times\\
&{\frac{{{\sum\limits_{\alpha \in I_{s,n} {\left\{ {j}
\right\}}} {{{\rm{rdet}} _{j} {\left( {({\bf A} {\bf A}^{ *}
)_{j .} ({\bf {e}}  _{l .} )}
\right)  _{\alpha} ^{\alpha} } }}}
}}{{{\sum\limits_{\alpha \in I_{s,n}} {{ {\left| {
{\bf A} {\bf A}^{ *} } \right| _{\alpha
}^{\alpha} }  }}} }}},
\end{align*}
where $ {\bf e}_{.k}$ and $ {\bf e}_{l.}$  are
the unit column and row vectors, respectively.

If we denote by
  \begin{align}\nonumber g_{tl}:=&\sum\limits_{l}{\sum\limits_{\beta \in J_{s_1,n} {\left\{ {t}
\right\}}} {{\rm{cdet}} _{t} \left( {\left(\left({\bf A}^{ 2m+1} \right)^{*}{\bf A}^{ 2m+1} \right)_{. t} \left( { {\bf e}}_{.k}
\right)} \right)_{\beta} ^{\beta} } }{ a}^{(2)}_{kl}
=\\\label{eq:comp_g}=&{\sum\limits_{\beta \in J_{s_1,n} {\left\{ {t}
\right\}}} {{\rm{cdet}} _{t} \left( {\left(\left({\bf A}^{ 2m+1} \right)^{*}{\bf A}^{ 2m+1} \right)_{. t} \left({\bf a}^{(2)}_{.l}
\right)} \right)_{\beta} ^{\beta} } }
\end{align}
 the $l$th component of a row-vector ${\bf g}_{t.}=\left[g_{t1},\ldots,g_{tn}\right]$, then
$$\sum\limits_{l}
g_{tl}{\sum\limits_{\alpha \in I_{s,n} {\left\{ {j}
\right\}}} {{{\rm{rdet}} _{j} {\left( {({\bf A} {\bf A}^{ *}
)_{j .} ({\bf {e}}  _{l .} )}
\right)  _{\alpha} ^{\alpha} } }}}={\sum\limits_{\alpha \in I_{s,n} {\left\{ {j}
\right\}}} {{{\rm{rdet}} _{j} {\left( {({\bf A} {\bf A}^{ *}
)_{j .} ({\bf {g}}  _{t .} )}
\right)  _{\alpha} ^{\alpha} } }}}.
$$
It follows that
\begin{align*}
a_{ij}^{c,{\dag}} = & \sum_{t }{\frac{\sum\limits_{\beta \in J_{s,n} {\left\{ {i}
\right\}}} {{\rm{cdet}} _{i} \left( {\left( {{\bf A}^{ *}
{\bf A}} \right)_{.i} \left({\bf {\check{a}}}_{.t} \right)}
\right)  _{\beta} ^{\beta} }
{\sum\limits_{\alpha \in I_{s,n} {\left\{ {j}
\right\}}} {{{\rm{rdet}} _{j} {\left( {({\bf A} {\bf A}^{ *}
)_{j .} ({\bf {g}}  _{t .} )}
\right)  _{\alpha} ^{\alpha} } }}}
}{\sum\limits_{\beta
\in J_{r,n}}  {{ \left| {{\bf A}^{ *}  {\bf
A}} \right|_{\beta}^{\beta} }
{\sum\limits_{\alpha \in I_{s_1,n}} {{\left|  { {\bf A}^{ 2m+1} \left({\bf A}^{2m+1} \right)^{*}
}  \right| _{\alpha} ^{\alpha}
}}}{\sum\limits_{\alpha \in I_{s,n}} {{ {\left| {
{\bf A} {\bf A}^{ *} } \right| _{\alpha
}^{\alpha} }  }}}
} }}.
\end{align*}
Construct the matrix  ${\bf G}=\left(g_{tl}\right)\in  {\mathbb{H}}^{n \times n}$, where $g_{tf}$ is given by (\ref{eq:comp_g}).
Denote ${\check{\bf A}}{\bf G}=
{\bf A}^{*}{\bf A}^{ 2}{\bf G}=:\hat{\bf G}=\left({\hat g}_{tl}\right)$. Then
\begin{align*}
a_{ij}^{c,{\dag}} = &{\frac{ \sum_{t}\sum_{k}\sum\limits_{\beta \in J_{s,n} {\left\{ {i}
\right\}}} {{\rm{cdet}} _{i} \left( {\left( {{\bf A}^{ *}
{\bf A}} \right)_{.i} \left({\bf {e}}_{.t} \right)}
\right)  _{\beta} ^{\beta} }{\hat {g}}_{tk}
{\sum\limits_{\alpha \in I_{s,n} {\left\{ {j}
\right\}}} {{{\rm{rdet}} _{j} {\left( {({\bf A} {\bf A}^{ *}
)_{j .} ({\bf {e}}  _{k  .} )}
\right)  _{\alpha} ^{\alpha} } }}}
}{\sum\limits_{\beta
\in J_{r,n}}  {{ \left(\left| {{\bf A}^{ *}  {\bf
A}} \right|_{\beta}^{\beta}\right)^2 }
{\sum\limits_{\alpha \in I_{s_1,n}} {{\left|  { {\bf A}^{ 2m+1} \left({\bf A}^{2m+1} \right)^{*}
}  \right| _{\alpha} ^{\alpha}
}}}
} }}.
\end{align*}

If we denote by
  \begin{align*}v^{(2)}_{tj}:=\sum\limits_{k}{\hat a}_{tk}
{{{\sum\limits_{\alpha \in I_{s,n} {\left\{ {j}
\right\}}} {{\rm{rdet}} _{j} \left( {\left( { {\bf A}{\bf A}^{*}
} \right)_{j.} ( {\bf e}_{k.})} \right)_{\alpha}
^{\alpha} } }}}={{{\sum\limits_{\alpha \in I_{s,n} {\left\{ {j}
\right\}}} {{\rm{rdet}} _{j} \left( {\left( { {\bf A}{\bf A}^{*}
} \right)_{j.} ( {\hat{\bf g}}_{t.})} \right)_{\alpha}
^{\alpha} } }}}
\end{align*}
 the $t$th component of a column-vector ${\bf v}^{(2)}_{.j}=\left[v^{(2)}_{1j},\ldots,v^{(2)}_{nj}\right]$, then
$$\sum\limits_{t}
{{\sum\limits_{\beta
\in J_{s,n} {\left\{ {i} \right\}}} {{\rm{cdet}} \left( {\left( {{\bf A}^{ *}
{\bf A}} \right)_{.i}
\left( { {\bf e}_{.t} }  \right)} \right)
 _{\beta} ^{\beta} } } }
\,v^{2}_{tj}={{\sum\limits_{\beta
\in J_{s,n} {\left\{ {i} \right\}}} {{\rm{cdet}} _{i}  \left( {\left( {{\bf A}^{ *}
{\bf A}} \right)_{.i}
\left( { {\bf v}^{(2)}_{.j} }  \right)} \right)_{\beta} ^{\beta} } } }.
$$
Thus we have (\ref{eq:cdetrep_cmp}) with ${\bf v}^{(2)}_{.j}$ from (\ref{eq:v2}).

If we denote by
  \begin{align*} w^{(2)}_{ik}:=\sum\limits_{t}
{{{\sum\limits_{\beta \in J_{s,n} {\left\{ {i}
\right\}}} {{\rm{cdet}} _{i} \left( {\left( { {\bf A}^{*}{\bf A}
} \right)_{.i} ( {\bf e}_{.t})} \right)_{\beta}
^{\beta} } }}}{\hat g}_{tk}={{{\sum\limits_{\beta \in J_{s,n} {\left\{ {i}
\right\}}} {{\rm{cdet}} _{i} \left( {\left( { {\bf A}^{*}{\bf A}
} \right)_{.i} ( {\hat{\bf g}}_{.k})} \right)_{\beta}
^{\beta} } }}}
\end{align*}
 the $k$th component of a row-vector ${\bf w}^{(2)}_{i.}=\left[w^{(2)}_{i1},\ldots,w^{(2)}_{in}\right]$, then
$$\sum\limits_{k}w^{(2)}_{ik}\sum\limits_{\alpha \in I_{s,n} {\left\{ {j}
\right\}}}
{{\rm{rdet}} _{j} \left( {\left( { {\bf A}{\bf A}^{*}
} \right)_{j.} ( {\bf e}_{k.})} \right)_{\alpha}
^{\alpha} }=\sum\limits_{\alpha \in I_{s,n} {\left\{ {j}
\right\}}}{{\rm{rdet}} _{j} \left( {\left( { {\bf A}{\bf A}^{*}
} \right)_{j.} ( {\bf w}^{(2)}_{i.})} \right)_{\alpha}
^{\alpha} }.
$$
Thus we have (\ref{eq:rdetrep_cmp}) with ${\bf w}^{(2)}_{i.}$ from (\ref{eq:w2}).

(ii) Let ${\bf A}\in  {\mathbb{H}}^{n\times n}_s$ be  Hermitian.

a) By
using the determinantal representations (\ref{eq:dr_rep_rdet}) for  ${\bf A}^{d}$, (\ref{eq:det_repr_proj_Q}) for  ${\bf Q}_A={\bf A}^{\dag}{\bf A}$, and (\ref{eq:det_repr_proj_P}) for ${\bf P}_A={\bf A}{\bf A}^{\dag}$, and taking into account Hermicity of ${\bf A}$,  we have
\begin{align*}
a_{ij}^{c,{\dag}} = &
 \sum_{l } \sum_{t } {\frac{{{\sum\limits_{\beta \in J_{s,n} {\left\{ {i}
\right\}}} {{\rm{cdet}} _{i} \left( {\left( {{\bf A}^{2}} \right)_{.i} \left( {\bf a}^{(2)}_{.t} \right)}
\right)  _{\beta} ^{\beta} } }}}{{{\sum\limits_{\beta
\in J_{s,n}}  {{ \left| {{\bf A}^{2}} \right|_{\beta}^{\beta} }}}} }}\times\\
&{\frac{{{\sum\limits_{\alpha
\in I_{s_1,n} {\left\{ {l} \right\}}} {{\rm{rdet}} _{l} \left(
{({\bf A}^{ m+1} )_{l.} \left( {\bf a}_{t.}^{ (m)} \right)}
\right)_{\alpha} ^{\alpha} } }}}{{{\sum\limits_{\alpha \in
I_{s_1,n}}  {{\left| {\left( { {\bf A}^{m+1} } \right){\kern
1pt}  _{\alpha} ^{\alpha} } \right|}}} }}}
{\frac{{{\sum\limits_{\alpha \in I_{s,n} {\left\{ {j}
\right\}}} {{{\rm{rdet}} _{j} {\left( {({\bf A}^{ 2}
)_{j .} ({\bf a}^{(2)}  _{l .} )}
\right)  _{\alpha} ^{\alpha} } }}}
}}{{{\sum\limits_{\alpha \in I_{s,n}} {{ {\left| {
{\bf A}^{ 2} } \right| _{\alpha
}^{\alpha} }  }}} }}},
\end{align*}
where ${\bf a}^{(2)}_{.t}$ and ${\bf a}^{(2)}  _{l .}$ are the $t$th column and the $l$th row of $ {\bf A}^{2}$, and ${\bf a}_{t.}^{ (m)}$ is the $t$th row of $
{\bf A}^{ m}$.  So, it is clear that

\begin{align*}
a_{ij}^{c,{\dag}} = & \sum_{l } \sum_{t } \sum_{k }{\frac{{{\sum\limits_{\beta \in J_{s,n} {\left\{ {i}
\right\}}} {{\rm{cdet}} _{i} \left( {\left( {{\bf A}^{2}} \right)_{.i} \left({\bf {e}}_{.t} \right)}
\right)  _{\beta} ^{\beta} } }}}{{{\sum\limits_{\beta
\in J_{s,n}}  {{ \left| {{\bf A}^{2}} \right|_{\beta}^{\beta} }}}} }}~a^{(m+2)}_{tk}\times\\
&{\frac{{{\sum\limits_{\alpha \in I_{s_1,n} {\left\{ {l}
\right\}}} {{\rm{rdet}} _{l} \left( {\left( { {\bf A}^{m+1}
} \right)_{l.} ({ {\bf e}}_{k.})} \right) _{\alpha} ^{\alpha} } }
}}{{{\sum\limits_{\alpha \in I_{s_1,n}} {{\left|  { {\bf A}^{ m+1}
}  \right| _{\alpha} ^{\alpha}
}}} }}}{\frac{{{\sum\limits_{\alpha \in I_{s,n} {\left\{ {j}
\right\}}} {{{\rm{rdet}} _{j} {\left( {({\bf A}^ {2}
)_{j .} ({\bf {a}} ^{(2)} _{l  .} )}
\right)  _{\alpha} ^{\alpha} } }}}
}}{{{\sum\limits_{\alpha \in I_{s,n}} {{ {\left| {
 {\bf A}^{2} } \right| _{\alpha
}^{\alpha} }  }}} }}},
\end{align*}
where $ {\bf e}_{.t}$ and $ {\bf e}_{k.}$  are
the unit column and row vectors, respectively.

If we denote by
  \begin{align}\nonumber u_{tl}:=&\sum\limits_{k} a^{(m+2)}_{tk}
{{{\sum\limits_{\alpha \in I_{s_1,n} {\left\{ {j}
\right\}}} {{\rm{rdet}} _{l} \left( {\left( { {\bf A}^{ m+1}
} \right)_{l.} ( {\bf e}_{k.})} \right)_{\alpha}
^{\alpha} } }}}=\\\label{eq:comp_u_h}=&{{{\sum\limits_{\alpha \in I_{s_1,n} {\left\{ {j}
\right\}}} {{\rm{rdet}} _{l} \left( {\left( { {\bf A}^{ m+1}
} \right)_{l.} ( {\bf a}^{(m+2)}_{t.})} \right)_{\alpha}
^{\alpha} } }}}
\end{align}
 the $t$th component of a column-vector ${\bf u}_{.\,l}=\left[u_{1l},\ldots,u_{nl}\right]$, then
$$\sum\limits_{t}
{{\sum\limits_{\beta
\in J_{s,n} {\left\{ {i} \right\}}} {{\rm{cdet}} \left( {\left( {{\bf A}^{2}} \right)_{.i}
\left( { {\bf e}_{.t} }  \right)} \right)
 _{\beta} ^{\beta} } } }
\,u_{tl}={{\sum\limits_{\beta
\in J_{s,n} {\left\{ {i} \right\}}} {{\rm{cdet}} _{i}  \left( {\left( {{\bf A}^{2}} \right)_{.i}
\left( { {\bf u}_{.l} }  \right)} \right)_{\beta} ^{\beta} } } }.
$$
It follows that
\begin{align*}
a_{ij}^{c,{\dag}} = & \sum_{l }{\frac{\sum\limits_{\beta \in J_{s,n} {\left\{ {i}
\right\}}} {{\rm{cdet}} _{i} \left( {\left( {{\bf A}^{ 2}
} \right)_{.i} \left({\bf {u}}_{.l} \right)}
\right)  _{\beta} ^{\beta} }
{\sum\limits_{\alpha \in I_{s,n} {\left\{ {j}
\right\}}} {{{\rm{rdet}} _{j} {\left( {( {\bf A}^{2}
)_{j .} ({\bf {a}}^{(2)}  _{l .} )}
\right)  _{\alpha} ^{\alpha} } }}}
}{\sum\limits_{\beta
\in J_{r,n}}  {{ \left| {{\bf A}^{2}  } \right|_{\beta}^{\beta} }
{\sum\limits_{\alpha \in I_{s_1,n}} {{\left|  { {\bf A}^{ m+1}
}  \right| _{\alpha} ^{\alpha}
}}}{\sum\limits_{\alpha \in I_{s,n}} {{ {\left| {
 {\bf A}^{2} } \right| _{\alpha
}^{\alpha} }  }}}
} }}.
\end{align*}
Construct the matrix  ${\bf U}=\left(u_{tl}\right)\in  {\mathbb{H}}^{n \times n}$, where $u_{tl}$ is given by (\ref{eq:comp_u_h}).
Denote $\hat{\bf U}:=
{\bf U}{\bf A}^{ 2}$. Then, taking into account that $\left| {\bf A}^{ 2}\right|_{\beta}^{\beta}=\left|  {\bf A}^{ 2} \right| _{\alpha
}^{\alpha}$, we have

\begin{align*}
a_{ij}^{c,{\dag}} = &{\frac{ \sum_{t}\sum_{k}\sum\limits_{\beta \in J_{s,n} {\left\{ {i}
\right\}}} {{\rm{cdet}} _{i} \left( {\left( {\bf A}^{ 2}
 \right)_{.i} \left({\bf {e}}_{.t} \right)}
\right)  _{\beta} ^{\beta} }{\hat {u}}_{tk}
{\sum\limits_{\alpha \in I_{s,n} {\left\{ {j}
\right\}}} {{{\rm{rdet}} _{j} {\left( {( {\bf A}^{2}
)_{j .} ({\bf {e}}  _{k  .} )}
\right)  _{\alpha} ^{\alpha} } }}}
}{\sum\limits_{\beta
\in J_{s,n}}  {{ \left(\left| {\bf A}^{2} \right|_{\beta}^{\beta}\right)^2 }
{\sum\limits_{\alpha \in I_{s_1,n}} {{\left|  { {\bf A}^{m+1}
}  \right| _{\alpha} ^{\alpha}
}}}
} }}.
\end{align*}

If we denote by
  \begin{align*}v^{(1)}_{tj}:=&\sum\limits_{k}{\hat u}_{tk}
{{{\sum\limits_{\alpha \in I_{s,n} {\left\{ {j}
\right\}}} {{\rm{rdet}} _{j} \left( {\left( {\bf A}^{2}
 \right)_{j.} ( {\bf e}_{k.})} \right)_{\alpha}
^{\alpha} } }}}=\\&{{{\sum\limits_{\alpha \in I_{s,n} {\left\{ {j}
\right\}}} {{\rm{rdet}} _{j} \left( {\left( { {\bf A}^{2}
} \right)_{j.} ( {\hat{\bf u}}_{t.})} \right)_{\alpha}
^{\alpha} } }}}
\end{align*}
 the $t$th component of a column-vector ${\bf v}^{(1)}_{.j}=\left[v^{(1)}_{1j},\ldots,v^{(1)}_{nj}\right]$, then
$$\sum\limits_{t}
{{\sum\limits_{\beta
\in J_{s,n} {\left\{ {i} \right\}}} {{\rm{cdet}} \left( {\left( {\bf A}^{2}
 \right)_{.i}
\left( { {\bf e}_{.t} }  \right)} \right)
 _{\beta} ^{\beta} } } }
\,v^{(1)}_{tj}={{\sum\limits_{\beta
\in J_{s,n} {\left\{ {i} \right\}}} {{\rm{cdet}} _{i}  \left( {\left( {\bf A}^{2}
 \right)_{.i}
\left( { {\bf v}^{(1)}_{.j} }  \right)} \right)_{\beta} ^{\beta} } } }.
$$
Thus we have (\ref{eq:cdetrep_cmp_h}) with ${\bf v}^{1}_{.j}$ from (\ref{eq:v1_h}).

If we denote by
  \begin{align*} w^{(1)}_{ik}:=&\sum\limits_{t}
{{{\sum\limits_{\beta \in J_{s,n} {\left\{ {i}
\right\}}} {{\rm{cdet}} _{i} \left( {\left(  {\bf A}^{2} \right)_{.i} ( {\bf e}_{.t})} \right)_{\beta}
^{\beta} } }}}{\hat u}_{tk}=\\&{{{\sum\limits_{\beta \in J_{s,n} {\left\{ {i}
\right\}}} {{\rm{cdet}} _{i} \left( {\left(  {\bf A}^{2} \right)_{.i} ( {\hat{\bf u}}_{.k})} \right)_{\beta}
^{\beta} } }}}
\end{align*}
 the $k$th component of a row-vector ${\bf w}^{(1)}_{i.}=\left[w^{(1)}_{i1},\ldots,w^{(1)}_{in}\right]$, then
$$\sum\limits_{k}w^{(1)}_{ik}\sum\limits_{\alpha \in I_{s,n} {\left\{ {j}
\right\}}}
{{\rm{rdet}} _{j} \left( {\left( {\bf A}^{2}
 \right)_{j.} ( {\bf e}_{k.})} \right)_{\alpha}
^{\alpha} }=\sum\limits_{\alpha \in I_{s,n} {\left\{ {j}
\right\}}}{{\rm{rdet}} _{j} \left( {\left( {\bf A}^{2}
 \right)_{j.} ( {\bf w}^{(1)}_{i.})} \right)_{\alpha}
^{\alpha} }.
$$
Thus we have (\ref{eq:rdetrep_cmp_h}) with ${\bf w}^{(1)}_{i.}$ from (\ref{eq:w1_h}).

b)
By
using the determinantal representation (\ref{eq:dr_rep_cdet}) for  ${\bf A}^{d}$ in (\ref{eq:cmp_start}), we get
\begin{align*}
a_{ij}^{c,{\dag}} =
& \sum_{k } \sum_{t } {\frac{{{\sum\limits_{\beta \in J_{s,n} {\left\{ {i}
\right\}}} {{\rm{cdet}} _{i} \left( {\left( {{\bf A}^{ 2}
} \right)_{.i} \left({\bf a}^{(2)}_{.t} \right)}
\right)  _{\beta} ^{\beta} } }}}{{{\sum\limits_{\beta
\in J_{s,n}}  {{ \left| {{\bf A}^{2} } \right|_{\beta}^{\beta} }}}} }}\times\\
& {\frac{{   {\sum\limits_{\beta \in J_{s_1,n} {\left\{ {t}
\right\}}} {{\rm{cdet}} _{t} \left( {\left({\bf A}^{ m+1} \right)_{. t} \left( {\bf a}^{(m)}_{.k}
\right)} \right)  _{\beta} ^{\beta} } }
}}{{{\sum\limits_{\beta \in J_{s,n}} {{\left| {\left({\bf A}^{ m+1} \right) _{\beta} ^{\beta}
}  \right|}}} }}}{\frac{{{\sum\limits_{\alpha \in I_{s,n} {\left\{ {j}
\right\}}} {{{\rm{rdet}} _{j} {\left( {( {\bf A}^{2}
)_{j .} ({\bf a}^{(2)}  _{k .} )}
\right)  _{\alpha} ^{\alpha} } }}}
}}{{{\sum\limits_{\alpha \in I_{s,n}} {{ {\left| {
 {\bf A}^{2} } \right| _{\alpha
}^{\alpha} }  }}} }}}.
\end{align*}

It is clear that
\begin{align*}
a_{ij}^{c,{\dag}} = & \sum_{l} \sum_{k } \sum_{t } {\frac{{{\sum\limits_{\beta \in J_{s,n} {\left\{ {i}
\right\}}} {{\rm{cdet}} _{i} \left( {\left( {\bf A}^{ 2}
 \right)_{.i} \left({\bf a}^{(2)}_{.t} \right)}
\right)  _{\beta} ^{\beta} } }}}{{{\sum\limits_{\beta
\in J_{s,n}}  {{ \left| {{\bf A}^{ 2} } \right|_{\beta}^{\beta} }}}} }}\times\\
& {\frac{{   {\sum\limits_{\beta \in J_{s_1,n} {\left\{ {t}
\right\}}} {{\rm{cdet}} _{t} \left( {\left({\bf A}^{ m+1} \right)_{. t} \left( { {\bf e}}_{.k}
\right)} \right)  _{\beta} ^{\beta} } }
}}{{{\sum\limits_{\beta \in J_{s_1,n}} {{\left| {\left({\bf A}^{ m+1} \right) _{\beta} ^{\beta}
}  \right|}}} }}}~{a}^{(m+2)}_{kl}~{\frac{{{\sum\limits_{\alpha \in I_{s,n} {\left\{ {j}
\right\}}} {{{\rm{rdet}} _{j} {\left( {({\bf A}^{ 2}
)_{j .} ({\bf {e}}  _{l .} )}
\right)  _{\alpha} ^{\alpha} } }}}
}}{{{\sum\limits_{\alpha \in I_{s,n}} {{ {\left| {
 {\bf A}^{ 2} } \right| _{\alpha
}^{\alpha} }  }}} }}}.
\end{align*}

If we denote by
  \begin{align}\nonumber g_{tl}:=&\sum\limits_{l}{\sum\limits_{\beta \in J_{s_1,n} {\left\{ {t}
\right\}}} {{\rm{cdet}} _{t} \left( {\left({\bf A}^{ m+1} \right)_{. t} \left( { {\bf e}}_{.k}
\right)} \right)_{\beta} ^{\beta} } }{ a}^{(m+2)}_{kl}
=\\&{\sum\limits_{\beta \in J_{s_1,n} {\left\{ {t}
\right\}}} {{\rm{cdet}} _{t} \left( {\left({\bf A}^{ m+1} \right)_{. t} \left({\bf a}^{(m+2)}_{.l}
\right)} \right)_{\beta} ^{\beta} } }\label{eq:comp_g_h}
\end{align}
 the $l$th component of a row-vector ${\bf g}_{t.}=\left[g_{t1},\ldots,g_{tn}\right]$, then
$$\sum\limits_{l}
g_{tl}{\sum\limits_{\alpha \in I_{s,n} {\left\{ {j}
\right\}}} {{{\rm{rdet}} _{j} {\left( {( {\bf A}^{ 2}
)_{j .} ({\bf {e}}  _{l .} )}
\right)  _{\alpha} ^{\alpha} } }}}={\sum\limits_{\alpha \in I_{s,n} {\left\{ {j}
\right\}}} {{{\rm{rdet}} _{j} {\left( {( {\bf A}^{2}
)_{j .} ({\bf {g}}  _{t .} )}
\right)  _{\alpha} ^{\alpha} } }}}.
$$
It follows that
\begin{align*}
a_{ij}^{c,{\dag}} = & \sum_{t }{\frac{\sum\limits_{\beta \in J_{s,n} {\left\{ {i}
\right\}}} {{\rm{cdet}} _{i} \left( {\left( {{\bf A}^{ 2}
} \right)_{.i} \left({\bf a}^{(2)}_{.t} \right)}
\right)  _{\beta} ^{\beta} }
{\sum\limits_{\alpha \in I_{s,n} {\left\{ {j}
\right\}}} {{{\rm{rdet}} _{j} {\left( {( {\bf A}^{ 2}
)_{j .} ({\bf {g}}  _{t .} )}
\right)  _{\alpha} ^{\alpha} } }}}
}{\sum\limits_{\beta
\in J_{r,n}}  {{ \left| {\bf A}^{ 2} \right|_{\beta}^{\beta} }
{\sum\limits_{\alpha \in I_{s_1,n}} {{\left|  { {\bf A}^{ m+1}
}  \right| _{\alpha} ^{\alpha}
}}}{\sum\limits_{\alpha \in I_{s,n}} {{ {\left| {
 {\bf A}^{ 2} } \right| _{\alpha
}^{\alpha} }  }}}
} }}.
\end{align*}
Construct the matrix ${\bf G}=\left(g_{tl}\right)\in  {\mathbb{H}}^{n \times n}$, where $g_{tl}$ is given by (\ref{eq:comp_g_h}).
Denote $\hat{\bf G}:={\check{\bf A}}{\bf G}=
{\bf A}^{ 2}{\bf G}$. Then
\begin{align*}
a_{ij}^{c,{\dag}} = &{\frac{ \sum_{t}\sum_{k}\sum\limits_{\beta \in J_{s,n} {\left\{ {i}
\right\}}} {{\rm{cdet}} _{i} \left( {\left( {{\bf A}^{2}
} \right)_{.i} \left({\bf {e}}_{.t} \right)}
\right)  _{\beta} ^{\beta} }{\hat {g}}_{tk}
{\sum\limits_{\alpha \in I_{s,n} {\left\{ {j}
\right\}}} {{{\rm{rdet}} _{j} {\left( {( {\bf A}^{2}
)_{j .} ({\bf {e}}  _{k  .} )}
\right)  _{\alpha} ^{\alpha} } }}}
}{\sum\limits_{\beta
\in J_{s,n}}  {{ \left(\left| {\bf A}^{2} \right|_{\beta}^{\beta}\right)^2 }
{\sum\limits_{\alpha \in I_{s_1,n}} {{\left|  { {\bf A}^{ m+1}
}  \right| _{\alpha} ^{\alpha}
}}}
} }}.
\end{align*}
If we denote by
  \begin{align*}v^{(2)}_{tj}:=&\sum\limits_{k}{\hat a}_{tk}
{{{\sum\limits_{\alpha \in I_{s,n} {\left\{ {j}
\right\}}} {{\rm{rdet}} _{j} \left( {\left( { {\bf A}^{2}
} \right)_{j.} ( {\bf e}_{k.})} \right)_{\alpha}
^{\alpha} } }}}=&\\{{{\sum\limits_{\alpha \in I_{s,n} {\left\{ {j}
\right\}}} {{\rm{rdet}} _{j} \left( {\left( {\bf A}^{2}
 \right)_{j.} ( {\hat{\bf g}}_{t.})} \right)_{\alpha}
^{\alpha} } }}}
\end{align*}
 the $t$th component of a column-vector ${\bf v}^{(2)}_{.j}=\left[v^{(2)}_{1j},\ldots,v^{(2)}_{nj}\right]$, then
$$\sum\limits_{t}
{{\sum\limits_{\beta
\in J_{s,n} {\left\{ {i} \right\}}} {{\rm{cdet}} \left( {\left( {\bf A}^{ 2}
 \right)_{.i}
\left( { {\bf e}_{.t} }  \right)} \right)
 _{\beta} ^{\beta} } } }
\,v^{(2)}_{tj}={{\sum\limits_{\beta
\in J_{s,n} {\left\{ {i} \right\}}} {{\rm{cdet}} _{i}  \left( {\left( {{\bf A}^{2}
} \right)_{.i}
\left( { {\bf v}^{(2)}_{.j} }  \right)} \right)_{\beta} ^{\beta} } } }.
$$
Thus we have (\ref{eq:cdetrep_cmp_h}) with ${\bf v}^{2}_{.j}$ from (\ref{eq:v2_h}).

If we denote by
  \begin{align*} w^{(2)}_{ik}:=&\sum\limits_{t}
{{{\sum\limits_{\beta \in J_{s,n} {\left\{ {i}
\right\}}} {{\rm{cdet}} _{i} \left( {\left( { {\bf A}^{2}
} \right)_{.i} ( {\bf e}_{.t})} \right)_{\beta}
^{\beta} } }}}{\hat g}_{tk}=\\&{{{\sum\limits_{\beta \in J_{s,n} {\left\{ {i}
\right\}}} {{\rm{cdet}} _{i} \left( {\left( { {\bf A}^{2}
} \right)_{.i} ( {\hat{\bf g}}_{.k})} \right)_{\beta}
^{\beta} } }}}
\end{align*}
 the $k$th component of a row-vector ${\bf w}^{(2)}_{i.}=\left[w^{(2)}_{i1},\ldots,w^{(2)}_{in}\right]$, then
$$\sum\limits_{k}w^{(2)}_{ik}\sum\limits_{\alpha \in I_{s,n} {\left\{ {j}
\right\}}}
{{\rm{rdet}} _{j} \left( {\left( {\bf A}^{2}
 \right)_{j.} ( {\bf e}_{k.})} \right)_{\alpha}
^{\alpha} }=\sum\limits_{\alpha \in I_{s,n} {\left\{ {j}
\right\}}}{{\rm{rdet}} _{j} \left( {\left( { {\bf A}^{2}
} \right)_{j.} ( {\bf w}^{(2)}_{i.})} \right)_{\alpha}
^{\alpha} }.
$$
Thus we have (\ref{eq:rdetrep_cmp_h}) with ${\bf w}^{(2)}_{i.}$ from (\ref{eq:w2_h}).
\end{proof}

The following  corollary gives determinantal representations of the CMP inverse for complex matrices.
 \begin{cor}
\label{th:detrep_c}Let ${\bf A}\in  {\mathbb{C}}^{n\times n}_s$, $\Ind {\bf A}=m$ and $\rk {\bf A}^m=s_1$. Then its CMP inverse $ {\bf A}^{c,{\dag}}= \left(a_{ij}^{c,{\dag}}\right)$ has the following determinantal representations

 \begin{align*}
a_{ij}^{c,{\dag}}=& {\frac{{{\sum\limits_{\beta \in J_{s,n} {\left\{ {i}
\right\}}} {{ {\left| {({\bf A}^{ *} {\bf A}
)_{.i} ({\bf {v}}^{(l)}_{.j} )}
\right|_{\beta} ^{\beta} } }}}}
}{{\left({\sum\limits_{\beta \in J_{s,n}} {{ {\left| {
{\bf A}^{ *} {\bf A} } \right|_{\beta
}^{\beta} }  }}}\right)^2
{\sum\limits_{\beta \in J_{s_1,n}} {{\left| {\left( { {\bf A}^{m+1}
} \right)_{\beta} ^{\beta}
}  \right|}}}
}}}=\\&= {\frac{\sum\limits_{\alpha \in I_{s,n} {\left\{ {j}
\right\}}}{ \left| {\left( { {\bf A}{\bf A}^{*}
} \right)_{j.} ( {\bf w}^{(l)}_{i.})} \right|_{\alpha}
^{\alpha} }
}{{\left({\sum\limits_{\alpha \in I_{s,n}} {{ {\left| {
{\bf A} {\bf A}^{ *} } \right|_{\alpha
}^{\alpha} }  }}}\right)^2
{\sum\limits_{\beta \in J_{s_1,n}} {{\left| {\left( { {\bf A}^{m+1}
} \right)_{\beta} ^{\beta}
}  \right|}}}
}}}
\end{align*}
for all $l=1, 2$,
where
\begin{align*}
{\bf v}^{(1)}_{.j}=&\left[
{{{\sum\limits_{\alpha \in I_{s,n} {\left\{ {j}
\right\}}} {\left| {\left( { {\bf A}{\bf A}^{*}
} \right)_{j.} ( {\hat{\bf u}}_{t.})} \right|_{\alpha}
^{\alpha} } }}} \right]\in {\mathbb{C}}^{n \times
1},~t=1,\ldots,n,\\
{\bf w}^{(1)}_{i.}=&\left[
{{{\sum\limits_{\beta \in J_{s,n} {\left\{ {i}
\right\}}} { \left| {\left( { {\bf A}^{*}{\bf A}
} \right)_{.i} ( {\hat{\bf u}}_{.k})} \right|_{\beta}
^{\beta} } }}} \right]\in {\mathbb{C}}^{1 \times
n},~k=1,\ldots,n,\\
{\bf v}^{(2)}_{.j}=&\left[
{{{\sum\limits_{\alpha \in I_{s,n} {\left\{ {j}
\right\}}} { \left| {\left( { {\bf A}{\bf A}^{*}
} \right)_{j.} ( {\hat{\bf g}}_{t.})} \right|_{\alpha}
^{\alpha} } }}} \right]\in {\mathbb{C}}^{n \times
1},~t=1,\ldots,n,\\
{\bf w}^{(2)}_{i.}=&\left[
{{{\sum\limits_{\beta \in J_{s,n} {\left\{ {i}
\right\}}} { \left| {\left( { {\bf A}^{*}{\bf A}
} \right)_{.i} ( {\hat{\bf g}}_{.k})} \right|_{\beta}
^{\beta} } }}} \right]\in {\mathbb{C}}^{1 \times
n},~k=1,\ldots,n,
\end{align*}
Here
  ${\hat{\bf u}}_{t.}$ is the $t$th row and ${\hat{\bf u}}_{.k}$ is  the $k$th column of
$\hat{\bf U}=
{\bf U}{\bf A}^{ 2}$, ${\hat{\bf g}}_{t.}$ is the $t$th row and ${\hat{\bf g}}_{.k}$ is  the $k$th column of
$\hat{\bf G}={\bf A}^{ 2}
{\bf G}$, and the matrices ${\bf U}=(u_{ij})\in{\mathbb{C}}^{n \times
n}$ and ${\bf G}=(g_{ij})\in{\mathbb{C}}^{n \times
n}$ are  such that
\begin{align*}
 u_{ij}=&{\sum\limits_{\alpha \in I_{s_1,n} {\left\{ {j}
\right\}}} { \left| {\left( { {\bf A}^{ m+1}
} \right)_{j.} ( {\bf a}^{(m+2)}_{i.})} \right|_{\alpha}
^{\alpha} } },\\
g_{ij}=&{\sum\limits_{\beta \in J_{s_1,n} {\left\{ {i}
\right\}}} { \left| {\left({\bf A}^{ m+1} \right)_{. i} \left({\bf a}^{(m+2)}_{.j}
\right)} \right|_{\beta} ^{\beta} } }.
\end{align*}

\end{cor}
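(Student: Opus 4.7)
The plan is to specialise the proof of Theorem \ref{th:detrep_cmp} to the commutative setting, substituting the complex-case determinantal representations in place of the quaternion ones. Since every row determinant and every column determinant reduces to the ordinary determinant over $\mathbb{C}$, and since Corollary \ref{cor:det_repr_Dr_c} already replaces the denominator $\sum_{\beta}|({\bf A}^{2m+1})^{*}{\bf A}^{2m+1}|_{\beta}^{\beta}$ of the quaternion Drazin formula by the simpler $\sum_{\beta}|{\bf A}^{m+1}|_{\beta}^{\beta}$, the entire argument of Theorem \ref{th:detrep_cmp} translates term-by-term into the stated commutative formulas, with the auxiliary matrices ${\bf U}$ and ${\bf G}$ inheriting the simpler shape given in the corollary.

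Concretely, I would start from the factorisation ${\bf A}^{c,\dag}={\bf Q}_{A}{\bf A}^{d}{\bf P}_{A}$ of (\ref{eq:rep_cmp}) and expand $a_{ij}^{c,\dag}=\sum_{l}\sum_{k}q_{il}\,a_{lk}^{d}\,p_{kj}$. I would substitute for $q_{il}$ the formula of Corollary \ref{cor:det_repr_MP_c}(ii), for $p_{kj}$ the formula of Corollary \ref{cor:det_repr_MP_c}(iii), and for $a_{lk}^{d}$ the complex Drazin formula of Corollary \ref{cor:det_repr_Dr_c}. Rewriting each substituted column or row as a linear combination of unit vectors and invoking the linearity of the determinant in a single column (the commutative specialisation of Lemmas \ref{lem:row_combin}--\ref{lem:col_combin}), I would then collapse the triple sum in two different orders. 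Contracting the Drazin factor first against ${\bf P}_{A}$ (summation over $k$) produces the auxiliary matrix ${\bf U}=(u_{ij})$ as defined in the corollary together with $\hat{\bf U}={\bf U}{\bf A}^{2}$, yielding the representations labelled $l=1$; contracting instead first against ${\bf Q}_{A}$ (summation over $l$) produces ${\bf G}$ and $\hat{\bf G}={\bf A}^{2}{\bf G}$, yielding the representations labelled $l=2$. The equivalence of the two alternative outer forms (the one with denominator $(\sum|{\bf A}^{*}{\bf A}|)^{2}\sum|{\bf A}^{m+1}|$ versus the one with $(\sum|{\bf A}{\bf A}^{*}|)^{2}\sum|{\bf A}^{m+1}|$) follows from the identity $\sum_{\beta}|{\bf A}^{*}{\bf A}|_{\beta}^{\beta}=\sum_{\alpha}|{\bf A}{\bf A}^{*}|_{\alpha}^{\alpha}$ established in Section~2.

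The main obstacle is purely bookkeeping: at each absorption step one has to verify that the principal-minor index sets $J_{s,n}$, $I_{s,n}$ and $J_{s_{1},n}$ survive correctly, and that the precise power ${\bf A}^{m+2}$ appearing in the substituted rows and columns of ${\bf U}$ and ${\bf G}$ (arising commutatively from the combination of one factor of ${\bf A}^{2}$ with ${\bf A}^{m}$ coming from the Drazin representation) emerges with the stated indexing. No genuinely new ideas beyond those already used in the quaternion proof are required, since the complex corollary is a strict specialisation of Theorem \ref{th:detrep_cmp}.
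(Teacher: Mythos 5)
The step you dismiss as ``purely bookkeeping'' is in fact where your argument breaks down, for two reasons.

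First, the contraction mechanism of Theorem \ref{th:detrep_cmp} does not transfer. In the quaternion proof the Drazin factor is taken in the form (\ref{eq:cdet_draz})--(\ref{eq:rdet_draz}), which carries an \emph{explicit} matrix factor $a^{(k)}_{it}$ (respectively $a^{(k)}_{sj}$) outside the determinantal sums; it is precisely this free factor of ${\bf A}^{m}$ that gets absorbed into the neighbouring projector. The complex representation of Corollary \ref{cor:det_repr_Dr_c}, which you propose to substitute, has no such free factor: the indices $l$ and $k$ of $a^{d}_{lk}$ sit one in the substituted row (or column) and one in the \emph{position} of the replacement and in the index set $I_{s_1,n}\{k\}$. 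Row/column linearity therefore lets you contract $a^{d}_{lk}$ against only one of its two neighbours ${\bf Q}_A$, ${\bf P}_A$; the second contraction cannot be carried out by the linearity lemmas alone, and you supply no substitute device (for instance, first representing ${\bf A}^{d}{\bf P}_A$ or ${\bf Q}_A{\bf A}^{d}$ as a single matrix and only then applying a determinantal formula to it).

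Second, even where a contraction is available, it does not produce the objects named in the statement. Performing $\sum_{l}({\bf A}^{*}{\bf A})_{tl}\,{\bf a}^{(m)}_{l.}$ yields the $t$th row of ${\bf A}^{*}{\bf A}^{m+1}$, and $\sum_{k}{\bf a}^{(m)}_{.k}({\bf A}{\bf A}^{*})_{kj}$ yields the $j$th column of ${\bf A}^{m+1}{\bf A}^{*}$ --- not rows or columns of ${\bf A}^{m+2}$, and the outer absorption gives ${\bf U}{\bf A}{\bf A}^{*}$ and ${\bf A}^{*}{\bf A}\,{\bf G}$ rather than ${\bf U}{\bf A}^{2}$ and ${\bf A}^{2}{\bf G}$. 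Your parenthetical explanation that ${\bf A}^{m+2}$ arises ``commutatively from the combination of one factor of ${\bf A}^{2}$ with ${\bf A}^{m}$'' implicitly replaces ${\bf A}^{*}{\bf A}$ and ${\bf A}{\bf A}^{*}$ by ${\bf A}^{2}$, which is legitimate only in the Hermitian case (part (ii) of Theorem \ref{th:detrep_cmp}), not for a general ${\bf A}\in{\mathbb{C}}^{n\times n}$. So your proposed term-by-term translation either proves a different formula from the one stated (one built from ${\bf A}^{*}{\bf A}^{m+1}$, ${\bf A}^{m+1}{\bf A}^{*}$, ${\bf U}{\bf A}{\bf A}^{*}$, ${\bf A}^{*}{\bf A}\,{\bf G}$), or it must be supplemented by a genuine argument identifying the two expressions --- and you would then also need to reconcile this with the statement as printed. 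As it stands, the proposal does not establish the corollary.
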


\section{An example}
Given the matrix
\begin{align*}
{\bf A}=
      \begin{bmatrix}
      0& \mathbf{k}&0 \\
        \mathbf{i} &0&-\mathbf{j} \\
       0&\mathbf{j}&0\\
      \end{bmatrix}.
\end{align*}
Since
\begin{align*}
{\bf A}^*{\bf A}=
      \begin{bmatrix}
      1& 0&\mathbf{k} \\
        0 &2&0 \\
       \mathbf{k}&0&1\\
      \end{bmatrix},
\end{align*}then (determinantal) $\rk({\bf A}^*{\bf A})=\rk({\bf A})=2$. Similarly, it can be find \begin{align*}
{\bf A}^2=&
      \begin{bmatrix}
      \mathbf{j}& 0&\mathbf{i} \\
        0 &1-\mathbf{j}&0 \\
       -\mathbf{k}&0&1\\
      \end{bmatrix},~{\bf A}^2\left({\bf A}^2\right)^*=
      \begin{bmatrix}
      2& 0&2\mathbf{i} \\
        0 &2&0 \\
       -2\mathbf{i}&0&2\\
      \end{bmatrix},
\end{align*}
and $\rk({\bf A}^2)=2$. So, $\Ind {\bf A}=1$ and we shall find ${\bf A}^{\tiny\textcircled{\#}}$ and ${\bf A}_{\tiny\textcircled{\#}}$ by (\ref{eq:detrep_repcorep_sim}) and (\ref{eq:detrep_lepcorep_sim}), respectively.

Since \begin{align*}
\hat{{\bf A}}={\bf A}({\bf A}^2)^*=
      \begin{bmatrix}
      0& -\mathbf{i}+\mathbf{k}&0 \\
        -2\mathbf{k} &0&-2\mathbf{j} \\
       0&-1+\mathbf{j}&0\\
      \end{bmatrix},\end{align*} then  by (\ref{eq:detrep_repcorep_sim})
 \begin{align*}a_{11}^{\tiny\textcircled{\#},r}=&
 \frac{\sum\limits_{\alpha \in I_{2,3} {\left\{ {1}
\right\}}} {{\rm{rdet}} _{1} \left( {\left( { {\bf A}^{2}\left({\bf A}^{2} \right)^{*}
} \right)_{1.} ({\hat {\bf a}}_{1.})} \right)_{\alpha} ^{\alpha} } }{{{\sum\limits_{\alpha \in I_{2,3}} {{\left|   {\bf A}^{2}\left({\bf A}^{2}\right)^{*}
  \right|_{\alpha} ^{\alpha}}}}
 }}=\\&\frac{1}{8}\left({\rm{rdet}} _{1}\begin{bmatrix}
         0& -\mathbf{i}+\mathbf{k} \\
          0 &2
      \end{bmatrix}+{\rm{rdet}} _{1}\begin{bmatrix}
         0& 0 \\
          -2i &2
      \end{bmatrix}\right)=0.\end{align*}
      Similarly,
\begin{align*}
a_{21}^{\tiny\textcircled{\#},r}=&
=\frac{1}{8}\left({\rm{rdet}} _{1}\begin{bmatrix}
         -2\mathbf{k}& 0 \\
          0 &2
      \end{bmatrix}+{\rm{rdet}} _{1}\begin{bmatrix}
         2& 2\mathbf{i} \\
          -2\mathbf{i} &2
      \end{bmatrix}\right)=-4\mathbf{k},\\
  a_{31}^{\tiny\textcircled{\#},r}=&
=\frac{1}{8}\left({\rm{rdet}} _{1}\begin{bmatrix}
         0& -1+\mathbf{j} \\
          0 &2
      \end{bmatrix}+{\rm{rdet}} _{1}\begin{bmatrix}
         0& 0 \\
          -2\mathbf{i} &2
      \end{bmatrix}\right)=0,\\
 a_{12}^{\tiny\textcircled{\#},r}=&
=\frac{1}{8}\left({\rm{rdet}} _{1}\begin{bmatrix}
         2& 0 \\
          0 &-\mathbf{i}+\mathbf{k}
      \end{bmatrix}+{\rm{rdet}} _{2}\begin{bmatrix}
         -\mathbf{i}+\mathbf{k} &0 \\
          0 &2
      \end{bmatrix}\right)=-4\mathbf{i}+4\mathbf{k},\\
  a_{22}^{\tiny\textcircled{\#},r}=&
=\frac{1}{8}\left({\rm{rdet}} _{1}\begin{bmatrix}
         2& 0 \\
          -2\mathbf{k}&0
      \end{bmatrix}+{\rm{rdet}} _{2}\begin{bmatrix}
         0 &-2\mathbf{j} \\
          0 &2
      \end{bmatrix}\right)=0,\\
  a_{32}^{\tiny\textcircled{\#},r}=&
=\frac{1}{8}\left({\rm{rdet}} _{1}\begin{bmatrix}
         2& 0 \\
          0 &-1+\mathbf{j}
      \end{bmatrix}+{\rm{rdet}} _{2}\begin{bmatrix}
         -1+\mathbf{j} &0 \\
          0 &2
      \end{bmatrix}\right)=-4+4\mathbf{j},\\
   a_{13}^{\tiny\textcircled{\#},r}=&
=\frac{1}{8}\left({\rm{rdet}} _{1}\begin{bmatrix}
         2& 2\mathbf{i} \\
          0 &0
      \end{bmatrix}+{\rm{rdet}} _{2}\begin{bmatrix}
         2 &0 \\
          -\mathbf{i}+\mathbf{k} &2
      \end{bmatrix}\right)=0,\\
    a_{23}^{\tiny\textcircled{\#},r}=&
=\frac{1}{8}\left({\rm{rdet}} _{1}\begin{bmatrix}
         2& 2\mathbf{i} \\
         -2\mathbf{k} &-2\mathbf{j}
      \end{bmatrix}+{\rm{rdet}} _{2}\begin{bmatrix}
         2 &0 \\
         0 &-2\mathbf{j}
      \end{bmatrix}\right)=-4\mathbf{j},\\
              a_{33}^{\tiny\textcircled{\#},r}=&
=\frac{1}{8}\left({\rm{rdet}} _{1}\begin{bmatrix}
         2& 2\mathbf{i} \\
         0 &0
      \end{bmatrix}+{\rm{rdet}} _{2}\begin{bmatrix}
         2 &0 \\
         -1+\mathbf{j} &0
      \end{bmatrix}\right)=0.
\end{align*}
So, $${\bf A}^{\tiny\textcircled{\#}}=\begin{bmatrix}
      0& -0.5\mathbf{i}+0.5\mathbf{k}&0 \\
        -0.5\mathbf{k} &0&-0.5\mathbf{j} \\
       0&-0.5+0.5\mathbf{j}&0\\
      \end{bmatrix}.$$
By analogy, due to (\ref{eq:detrep_lepcorep_sim}), we have
$${\bf A}_{\tiny\textcircled{\#}}=\begin{bmatrix}
      0& -0.5\mathbf{i}&0 \\
        0.5\mathbf{i}-0.5\mathbf{k} &0&0.5-0.5\mathbf{j} \\
       0&0.5\mathbf{j}&0\\
      \end{bmatrix}.$$
      We can verify the results, for example,  by the  representation $(ii)$ from Lemma \ref{lem:rep_cor2} because by Theorem \ref{th:det_rep_mp}
       \begin{align*}
{\bf A}^{\dag}=
      \begin{bmatrix}
      0& -0.5\mathbf{i}&0 \\
        -0.5\mathbf{k} &0&-0.5\mathbf{j} \\
       0&0.5\mathbf{j}&0\\
      \end{bmatrix},~ \left({\bf A}_{\tiny\textcircled{\#}}\right)^{\dag}=
      \begin{bmatrix}
      0& -0.5\mathbf{i}+0.5\mathbf{k}&0 \\
        \mathbf{i} &0&-\mathbf{j} \\
       0&0.5+0.5\mathbf{j}&0\\
      \end{bmatrix},\end{align*}  and ${\bf Q}_{A}{\bf A}=\left({\bf A}_{\tiny\textcircled{\#}}\right)^{\dag}$.

\section{Conclusion}\label{sec:con}
\noindent  Notions of the core inverse, the core EP inverse, the  DMP and MPD inverses, and the  CMP inverse have been extended to quaternion matrices  in this paper. Due to noncommutativity of quaternions, these generalized inverses in quaternion matrices  have some features in comparison to complex matrices. We have obtained their determinantal representations within the framework of the theory of column-row determinants previously introduced by the author. 
As the special case,  their determinantal representations in complex matrices have been obtained as well.
A numerical example to illustrate the main result has given.

\end{document}